\newtheorem{thm}{Theorem}
\newtheorem{lem}[thm]{Lemma}
\newtheorem{prop}[thm]{Proposition}
\newtheorem{cor}[thm]{Corollary}
\newtheorem{defn}[thm]{Definition}
\newtheorem{ex}[thm]{Example}
\newtheorem{rem}[thm]{Remark}
\newtheorem{remark}[thm]{Remark}
\numberwithin{thm}{section}
\numberwithin{equation}{section}
\numberwithin{theorem}{section}
\newcommand\myemph\emph
\newcommand \taui[1] {\tau_{i_{#1}}}
\newcommand\ga{\gamma}
\newcommand \om {\omega}
\newcommand\puncture{\text{\tiny $P$}}
\newcommand{\Brac}{\text{Brac}} 
\newcommand{\Cheb}{\text{Cheb}} 
\newcommand{\F}{\mathcal{F}} 
\newcommand\notchP{^{(\text{\tiny $P$})}}
\newcommand\punctureP{\text{\tiny{P}}}
\newcommand{\ellOpOrientation}{counterclockwise}
\newcommand\disk{{disk}}
\newcommand\disks{{disks}}
\newcommand\cross{\text{cross}}
\newcommand\signedadjacencymatrixofanidealtriangulation{signed adjacency matrix of an ideal triangulation}
\definecolor{myblue}{cmyk}{1.00,0.56,0.00,0.34}
\definecolor{mygreen}{cmyk}{0.5,0,0.5,0.5}
\definecolor{myred}{cmyk}{0.00,1.00,0.63,0.00}
\tikzset{->-/.style={decoration={
  markings,
  mark=at position #1 with {\arrow{stealth}}},postaction={decorate}}}
\tikzset{
  on each segment/.style={
    decorate,
    decoration={
      show path construction,
      moveto code={},
      lineto code={
        \path [#1]
        (\tikzinputsegmentfirst) -- (\tikzinputsegmentlast);
      },
      curveto code={
        \path [#1] (\tikzinputsegmentfirst)
        .. controls
        (\tikzinputsegmentsupporta) and (\tikzinputsegmentsupportb)
        ..
        (\tikzinputsegmentlast);
      },
      closepath code={
        \path [#1]
        (\tikzinputsegmentfirst) -- (\tikzinputsegmentlast);
      },
    },
  },
  mid arrow/.style={postaction={decorate,decoration={
        markings,
        mark=at position .5 with {\arrow[#1]{stealth}}
      }}},
}
\def\Prufer#1#2#3#4{
	\coordinate (x) at (#1,#2);
	\fill (x) circle (.1);
	\draw[#4] (x) .. controls (#1,#2+2.5) and (#1+.5,#2+3) .. (#1+#3,#2+3);
}
\newcommand\PruferShort[4]{
	\coordinate (x) at (#1,#2);
	\fill (x) circle (.1);
	\draw[#4] (x) .. controls (#1,#2+4) and (#1+.1,#2+4) .. (#1+#3,#2+3.8);
}
\newcommand\PruferShortCurvedLeft[4]{
	\coordinate (x) at (#1,#2);
	\fill (x) circle (.1);
	\draw[#4] (x) .. controls (#1,#2+4) and (#1-.1,#2+4) .. (#1-#3,#2+3.8);
}
\newcommand\MacroTriangle{
\draw[thick]
(0,0) -- (4,0) -- (2,4) -- (0,0);
\draw [fill=olive] (0,0) circle (.6ex);
\draw [fill=olive] (4,0) circle (.6ex);
\draw [fill=olive] (2,4) circle (.6ex);
}
\newcommand\TikzTriangle[1]{
\begin{tikzpicture}[scale={#1} ]
\MacroTriangle
\node at (4.5,0) {}; \node at (-0.5,0) {};
\end{tikzpicture}
}
\newcommand\MacroTwoVertexTriangle{ 
\draw[style = thick]
	(0,0) .. controls (-1.7,1) and (-1.6,3) .. (0,3.5) ..
	controls (1.6,3)  and (1.7,1) .. (0,0);
	\draw [fill=olive] (0,0) circle (.6ex);
	\draw [fill=olive] (0,3.5) circle (.6ex);

\draw[style = thick, rounded corners = 20pt] 
	(0,0) .. controls (-0.8,1) and (-0.5,1.7) .. (0,2.5) ..
	controls (0.5,1.7)  and (0.8,1) .. (0,0);
}
\newcommand\TikzTwoVertexTriangle[1]{ 
\begin{tikzpicture}[scale={#1} ]
\MacroTwoVertexTriangle
\end{tikzpicture}
}
\newcommand\MacroVeryThinNoose{
\draw[style = thick, rounded corners = 10pt] 
	(0,0) .. controls (-0.6,1) and (-0.4,1.7) .. (0,2.2) ..
	controls (0.4,1.7)  and (0.6,1) .. (0,0);
}
\newcommand\MacroOneVertexTriangle{
\draw[style = thick, rounded corners = 30pt]
	(0,0) .. controls (-2.7,1) and (-2.4,2.5) .. (0,3.5) ..
	controls (2.4,2.5)  and (2.7,1) .. (0,0);
	
	\draw [fill=olive] (0,0) circle (.6ex);
	
\begin{scope}[rotate=30]
\MacroVeryThinNoose
\end{scope}

\begin{scope}[rotate=-35]
\MacroVeryThinNoose
\end{scope}
}
\newcommand\TikzOneVertexTriangle[1]{ 
\begin{tikzpicture}[scale={#1} ]
\MacroOneVertexTriangle
\end{tikzpicture}
}
\newcommand\MacroSelfFoldedTriangleNoLabel{
	\draw[thick]
	(0,0) -- (0,2);
	
	\draw [fill=violet] (0,2) circle (1.2*\TikzPunctureActualSize);
	
	\draw[style = thick, rounded corners = 20pt]
	(0,0) .. controls (-1.8,1) and (-1.5,2.5) .. (0,3.5) ..
	controls (1.5,2.5)  and (1.8,1) .. (0,0); 
	\draw [fill=olive] (0,0) circle (.6ex);
}
\newcommand\TikzSelfFoldedTriangle[1]{ 
\begin{tikzpicture}[scale={#1} ]
\MacroSelfFoldedTriangleNoLabel
\draw (-1.6,2) node {$\ell$};
\draw (0.3,1) node {$r$};
\end{tikzpicture}
}
\newcommand\TikzPunctureActualSize{0.1}
\begin{document}

\title{Cluster algebraic interpretation of infinite friezes}

\author[Gunawan]{Emily Gunawan}
\address{University of Connecticut, Storrs, Connecticut USA}
\email{egunawan@math.umn.edu}
\author[Musiker]{Gregg Musiker}
\address{Department of Mathematics, University of Minnesota, Minneapolis, Minnesota, USA}
\email{musiker@math.umn.edu}
\author[Vogel]{Hannah Vogel}
\address{Department of Mathematics and Scientific Computing, University of Graz, Graz, Austria}
\email{hannah.vogel@uni-graz.at}

\date{\today}

\subjclass[2010]{13F60 (primary), 05C70, 05E15 (secondary)} 
\keywords{cluster algebra, Conway-Coxeter frieze, frieze pattern, infinite frieze, triangulation, marked surface}

\begin{abstract}
Originally studied by Conway and Coxeter, friezes appeared
in various recreational mathematics publications in the 1970s.   More recently, in 2015, Baur, Parsons, and Tschabold 
constructed periodic infinite friezes and related them to matching numbers in the once-punctured disk and annulus.  In this paper, we
study such infinite friezes with an eye towards cluster algebras of type D and affine A, respectively.  By examining infinite friezes 
with Laurent polynomial entries, we discover new symmetries and formulas relating the entries of this frieze to one another. 
Lastly, we also present a correspondence between Broline, Crowe and Isaacs's classical matching tuples and combinatorial interpretations of elements of cluster algebras from
surfaces.
\end{abstract}

\maketitle

\tableofcontents

\section{Introduction}
\label{sec:intro}

A Conway-Coxeter \emph{frieze} $\F=
\{\F_{ij}\}_{i \leq j}$ 
 is an array of rows (arranged and indexed as in~Fig.~\ref{fig:defn_integer_infinite_frieze}) 
such that $\F_{i,i} = 0$ and $\F_{i,i+1} = 1$, and, 
 for every diamond
\[
\begin{array}{ccccccc}
 &c&\\
 a&&b\\
 &d&
\end{array}\]
of entries in the frieze, the equation $ab-cd =1$ is satisfied. 

\begin{figure}[!hbt]
\centering
\begin{tikzpicture}[font=\footnotesize] 
\matrix(m) [matrix of math nodes,row sep={1.35em,between origins},column sep={1.55em,between origins},nodes in empty cells]{
&0&&0&&0&&0&&0&&&&&&\\ 
\node{\cdots};&&1&&1&&1&&1&&1&&\node{\cdots};&&\\[-0.25em]
&&&\F_{-1,1}&&\F_{0,2}&&\F_{1,3}&&\F_{2,4}&&\F_{3,5}&&&\\
&&\node{\cdots};&&\F_{-1,2}&&\F_{0,3}&&\F_{1,4}&&\F_{2,5}&&\F_{3,6}&&\node{\cdots};\\
&&&&&\F_{-1,3}&&\F_{0,4}&&\F_{1,5}&&\F_{2,6}&&\F_{3,7}&\\
&&&&&&&&\node[rotate=-6.5,shift={(-0.034cm,-0.08cm)}] {\ddots};&&&&\node[rotate=-6.5,shift={(-0.034cm,-0.08cm)}]  {\ddots};&&\\
};
\end{tikzpicture}
\caption{Arrangement and indices for frieze entries.}
\label{fig:defn_integer_infinite_frieze}
\newcommand\pentascale{0.31}
\newcommand\myyellow{white}
\newcommand\mygray{gray}
\tikzset{->-/.style={decoration={
  markings,
  mark=at position #1 with {\arrow{stealth}}},postaction={decorate}}}
 \newcommand\mypentagon[1]
 {
 \begin{scope}[scale=\pentascale]
 \coordinate (a) at (0,0);
\foreach \x in {-144,-72,0,72,144} {
   \begin{scope}[rotate=\x]
    \coordinate (\x) at (0,-1.25) {};
   \end{scope}
}
\draw [fill=#1,opacity=0.55] (-144) -- (-72) -- (0) -- (72) -- (144) -- (-144);
\draw [opacity=0.65] (0) -- (144) (-144) -- (0);
\end{scope}
}

 \begin{tikzpicture}
\matrix[outer sep=0pt,column sep=-0.2cm,row sep=-0.1cm,
ampersand replacement=\&]
{
\mypentagon{\mygray}
\draw[thick,->-=0.66] (-72) -- (0); 
\draw (-72) node [blue,fill,circle,inner sep=1pt] {};
\draw (0) node [red,fill,circle,inner sep=1pt] {};
\&
\&
\mypentagon{\mygray}
\draw[thick,->-=0.7] (0) -- (72); 
\draw (0) node [red,fill,circle,inner sep=1pt] {};
\draw (72) node [fill=mygreen,circle,inner sep=1pt] {};
\&
\&
\mypentagon{\mygray}
\draw[thick,->-=0.6] (72) -- (144); 
\draw (72) node [mygreen,fill,circle,inner sep=1pt] {};
 \draw (144) node [violet,fill,circle,inner sep=1pt] {};
\&\&
\mypentagon{\mygray}
\draw[thick,->-=0.65] 
(144) -- (-144);
 \draw (144) node [violet,fill,circle,inner sep=1pt] {};
 \draw (-144) node [magenta,fill,circle,inner sep=1pt] {};
\&
\&
\mypentagon{\myyellow}
\draw[thick,->-=0.7] (-144) -- (-72); 
\draw (-144) node [magenta,fill,circle,inner sep=1pt] {};
\draw (-72) node [blue,fill,circle,inner sep=1pt] {};
\&
\&
\mypentagon{\mygray}
\draw[thick,->-=0.6] (-72) -- (0); 
\draw (-72) node [blue,fill,circle,inner sep=1pt] {};
\draw (0) node [red,fill,circle,inner sep=1pt] {};
\\
\draw (-1,0) node {$\cdots$};
\&
\mypentagon{\mygray} 
\draw[thick,->-=0.6] (-72) -- (72); 
\draw (-72) node [fill,blue,circle,inner sep=1pt] {};
\draw (72) node [mygreen,fill,circle,inner sep=1pt] {};
\&
\&
\mypentagon{\mygray}
\draw[thick,->-=0.6] (0) -- (144); 
\draw (0) node [red,fill,circle,inner sep=1pt] {};
 \draw (144) node [violet,fill,circle,inner sep=1pt] {};
\&
\&
\mypentagon{\mygray}
\draw[thick,->-=0.6] (72) -- (-144); 
\draw (72) node [fill=mygreen,circle,inner sep=1pt] {};
\draw (-144) node [fill=magenta,circle,inner sep=1pt] {};
\&
\&
\mypentagon{\myyellow}
\draw[thick,->-=0.6] (144) -- (-72); 
\draw (144) node [violet,fill,circle,inner sep=1pt] {};
\draw (-72) node [blue,fill,circle,inner sep=1pt] {};
\&
\&
\mypentagon{\myyellow}
\draw[thick,->-=0.6] (-144) -- (0); 
\draw (-144) node [magenta,fill,circle,inner sep=1pt] {};
\draw (0) node [red,fill,circle,inner sep=1pt] {};
\&
\&
\mypentagon{\mygray}
\draw[thick,->-=0.6] (-72) -- (72); 
\draw (-72) node [blue,fill,circle,inner sep=1pt] {};
\draw (72) node [fill=mygreen,circle,inner sep=1pt] {};
 \\
  \mypentagon{\myyellow}
  \draw[thick,->-=0.6] (-144) -- (72); 
  \draw (-144) node [magenta,fill,circle,inner sep=1pt] {};
  \draw (72) node [fill=mygreen,circle,inner sep=1pt] {};
  \&
  \&
\mypentagon{\mygray}
\draw[thick,->-=0.6] (-72) -- (144); 
\draw (-72) node [blue,fill,circle,inner sep=1pt] {};
 \draw (144) node [violet,fill,circle,inner sep=1pt] {};
\&
\&
\mypentagon{\mygray}
\draw[thick,->-=0.6] (0) -- (-144); 
\draw (0) node [red,fill,circle,inner sep=1pt] {};
\draw (-144) node [magenta,fill,circle,inner sep=1pt] {};
\&
\&
\mypentagon{\myyellow}
\draw[thick,->-=0.6] (72) -- (-72); 
\draw (72) node [fill=mygreen,circle,inner sep=1pt] {};
\draw (-72) node [blue,fill,circle,inner sep=1pt] {};
\&
\&
\mypentagon{\myyellow}
\draw[thick,->-=0.6] (144) -- (0); 
\draw (144) node [violet,fill,circle,inner sep=1pt] {};
\draw (0) node [red,fill,circle,inner sep=1pt] {};
\&
\&
\mypentagon{\myyellow}
\draw[thick,->-=0.6] (-144) -- (72); 
\draw (-144) node [magenta,fill,circle,inner sep=1pt] {};
\draw (72) node [fill=mygreen,circle,inner sep=1pt] {}; 
\\
\draw (-1,0) node {$\cdots$};
\&
\mypentagon{\myyellow}
\draw[thick,->-=0.65] (-144) -- (144); 
\draw (-144) node [magenta,fill,circle,inner sep=1pt] {};
\draw (144) node [violet,fill,circle,inner sep=1pt] {};
\&
\&
\mypentagon{\mygray}
\draw[thick,->-=0.65] (-72) -- (-144); 
\draw (-72) node [blue,fill,circle,inner sep=1pt] {};
\draw (-144) node [magenta,fill,circle,inner sep=1pt] {};
\&
\&
\mypentagon{\myyellow}
\draw[thick,->-=0.7] (0) -- (-72); 
\draw (0) node [red,fill,circle,inner sep=1pt] {};
\draw (-72) node [blue,fill,circle,inner sep=1pt] {};
\&
\&
\mypentagon{\myyellow}
\draw[thick,->-=0.6] (72) -- (0); 
\draw (72) node [fill=mygreen,circle,inner sep=1pt] {};
\draw (0) node [red,fill,circle,inner sep=1pt] {};
\&
 \&
\mypentagon{\myyellow}
\draw[thick,->-=0.7] (144) -- (72); 
\draw (144) node [violet,fill,circle,inner sep=1pt] {};
\draw (72) node [fill=mygreen,circle,inner sep=1pt] {};
\&
\&
\mypentagon{\myyellow}
\draw[thick,->-=0.7] (-144) -- (144); 
\draw (-144) node [magenta,fill,circle,inner sep=1pt] {};
\draw (144) node [violet,fill,circle,inner sep=1pt] {};
\\
};
\end{tikzpicture}
\input{friezepentagonintyellow}
\caption{Diagonals of a polygon correspond to
entries of a finite frieze. 
}
\label{fig:friezepentagonyellow}
\def\mylocalscale{0.8}
\def\mylocalblue{blue!30}
\def\mylocalviolet{violet}
\def\mylocalpathcolor{black}
\def\localTemplatePolygon{
 \coordinate (a) at (0,0);
\foreach \x in {-144,-72,0,72,144} {
   \begin{scope}[rotate=\x]
    \coordinate (\x) at (0,-1.25) {}; 
    \end{scope}
}
\draw[opacity=0.65] (-144) -- (-72) -- (0) -- (72) -- (144) -- (-144);
\draw (0) node [below, violet] {   $\mathbf v_1$};
\draw (72) node [right, blue] {    $\mathbf v_2$};
\draw (144) node [right] {   $v_3$};
\draw (-144) node [left] {   $v_4$};
\draw (-72) node [left] {   $v_5$};
\draw[opacity=0.4] (0) -- (144) (-144) -- (0);

\draw [fill=\mylocalblue,opacity=.5] (0) -- (144) -- (72) -- (0);

}

\begin{tikzpicture}[scale=\mylocalscale,font=\small]
 \coordinate (a) at (0,0);
\foreach \x in {-144,-72,0,72,144} {
   \begin{scope}[rotate=\x]
    \node (\x) at (0,-1.25) [fill,circle,inner sep=0.5pt] {};
   \end{scope}
}
\draw[opacity=0.65] (-144) -- (-72) -- (0) -- (72) -- (144) -- (-144);
\draw (0) node [below] {  $v_1$};
\draw (72) node [right] {    $v_2$};
\draw (144) node [right] {  $v_3$};
\draw (-144) node [left] {   $v_4$};
\draw (-72) node [left] {   $v_5$};
\draw[opacity=0.65] (0) -- (144) (-144) -- (0);
\draw[->-=.7,red,very thick, densely dotted] (-72) -- (144) node[left,pos=0.6]{$\gamma$};
\draw (0,-2.5) node {$T$ and \textcolor{red}{$\gamma$}};
\draw (0,-3) node {};
\draw (-0.4,-0.2) node[magenta] {$b$};
\draw (0.4,-0.2) node[black] {$a$};
\end{tikzpicture}
\begin{tikzpicture}[scale=\mylocalscale,font=\small]
\localTemplatePolygon
\draw[fill=\mylocalviolet,opacity=.6] (0) -- (-144) -- (-72) -- (0);
\draw [ultra thick, \mylocalpathcolor, ->-=0.5] (-72) -- (-144) node[pos=0.5,left] {\small $1$};
\draw [ultra thick, \mylocalpathcolor, ->-=0.5] (-144) -- (0) node[pos=0.3,right,magenta] {$b$};
\draw [ultra thick, \mylocalpathcolor, ->-=0.5] (0) -- (144) node[pos=0.7,left,black] {$a$};
\draw (0,-2.3) node []{$(t_1,t_2)=(\textcolor{violet}{A},\textcolor{blue}{C})$};
\draw (0,-3) node {$1\, b^{-1} \, a $};
\draw (-0.8,0) node [black] {$A$};
\draw (0.7,0) node [black] {$C$};
\end{tikzpicture}
\begin{tikzpicture}[scale=\mylocalscale,font=\small]
\localTemplatePolygon
\draw[fill=\mylocalviolet,opacity=.6] (0) -- (144) -- (-144) -- (0);
\draw[ultra thick, \mylocalpathcolor, ->-=0.5] (-72) -- (0) node[pos=0.3,right, below] {\small $1$};
\draw[ultra thick, \mylocalpathcolor, ->-=0.5] (0) -- (-144) node[pos=0.6,left,magenta] {$b$};
\draw[ultra thick, \mylocalpathcolor, ->-=0.5] (-144) -- (144)node[pos=0.6,above] {\small $1$};
\draw (0,-2.3) node []{$(t_1,t_2)=(\textcolor{violet}{B},\textcolor{blue}{C})$};
\draw (0,-3) node {$1\, b^{-1} \, 1 $};

\draw (0,0) node [black] {$B$};
\draw (0.7,0) node [black] {$C$};
\end{tikzpicture}
\caption{The BCI $2$-tuples for $\gamma$ which match vertices $v_1$ \& $v_2$ to their adjacent triangles, and
the corresponding trails  
whose weights sum up to the expansion $x_{\textcolor{red}{\gamma}}=\frac{a}{\textcolor{magenta}{b}} + \frac{1}{\textcolor{magenta}{b}}$.}
\label{fig:bci_trail}
\end{figure}
 
 We say a frieze is \emph{finite} if it is bounded above and below by a row of $1$s.
In the 70s, Conway and Coxeter showed that finite friezes with positive integer entries are in bijection with triangulations of polygons~\cite{Cox71,CC73}.
Given a triangulation $T$ of a polygon, each entry of the second row of the corresponding frieze is the number of triangles adjacent to a vertex.
Broline, Crowe, and Isaacs further studied this in~\cite{BCI74} and 
found that every entry in such a frieze corresponds to a diagonal (see~Fig.~\ref{fig:friezepentagonyellow}).
To any diagonal, they associate a set of vertices $v_{i_1}, \ldots, v_{i_r}$ (those lying to the right) and then match these to a \emph{BCI $r$-tuple} $(t_{1}, \ldots, t_{r})$ of pairwise-distinct triangles in $T$, such that $t_{j}$ is incident to vertex $v_{i_j}$. 
For example, in~Fig.~\ref{fig:bci_trail}, the diagonal from vertex $v_5$ to vertex $v_3$ is associated to the vertices $v_1$ and $v_2$. There are exactly two BCI $2$-tuples corresponding to $v_1$ and $v_2$.
 
\begin{figure}[!hbt]
\def\scalepuncturedpentagon{0.6}
\begin{tikzpicture}[scale=0.9,font=\normalsize] 
\node (a) at (0,0) [fill,circle,inner sep=2pt] {};
\draw (0,0) circle (1.25cm);
\foreach \x in {-144,-72,0,72,144} {
   \begin{scope}[rotate=\x]
    \node (\x) at (0,-1.25) [fill,circle,inner sep=1pt] {};
   \end{scope}
}
\draw (144) node [above right] {$v_3$};
\draw (72) node [right] {$v_2$};
\draw (0) node [below] {$v_1$};
\draw (-72) node [left] {$v_5$};
\draw (-144) node [above left] {$v_4$};
\draw[] (a) to (-144);
\draw[] (a) to (-72);
\draw[] (a) to (0);
\draw[out=5,in=30] (0) to  
(-144);
\draw[out=0,in=-80] (0) to  (144);
\end{tikzpicture}
\begin{tikzpicture}[scale=0.9,font=\small] 
\node (a) at (0,0) [fill,circle,inner sep=2pt] {};
\draw (0,0) circle (1.25cm);
\foreach \x in {-144,-72,0,72,144} {
   \begin{scope}[rotate=\x]
    \node (\x) at (0,-1.25) [fill,circle,inner sep=1pt] {};
   \end{scope}
}
\draw (0) node [below] {$\mathbf 4$}; 
\draw (72) node [right] {$\mathbf {1}$}; 
\draw (144) node [above right]  {$\mathbf 2$}; 
\draw (-144) node [above left] {$\mathbf 3$}; 
\draw (-72) node [left] {$\mathbf 2$}; 
\draw[] (a) to (-144); 
\draw[] (a) to (-72);
\draw[] (a) to (0);
\draw[out=5,in=30] (0) to  (-144);
\draw[out=0,in=-80] (0) to  (144);
\end{tikzpicture}
\begin{tikzpicture}
 \matrix(m) [matrix of math nodes,row sep={1.0em,between origins},column sep={1.0em,between origins},nodes in empty cells, font=\footnotesize]{
&&1&&1&&1&&1&&1&&1&&1&&1&&1&&1&\\
\text{Row $2$}&&&\mathbf{4}&&\mathbf{1}&&\mathbf{2}&&\mathbf{3}&&\mathbf{2}&&\mathbf{4}&&\mathbf{1}&&\mathbf{2}&&\mathbf{3}&&\mathbf{2}&\\
&&&&3&&1&&5&&5&&7&&3&&1&&5&&5&&7&&\node{\cdots};&\\
&&\node{\cdots};&&&2&&2&&8&&17&&5&&2&&2&&8&&17&&5&\\
&&&&&&3&&3&&27&&12&&3&&3&&3&&27&&12&&3&\\
};
\end{tikzpicture}
\caption{First $5$ rows
of the infinite frieze from a pentagon triangulation.}
\label{fig:puncturedpentagon}
\end{figure}
 
More recently,
Caldero and Chapoton in~\cite{CC06} showed that finite frieze patterns appear in the context of Fomin--Zelevinsky cluster algebras~\cite{FZ02} of type $A$. 
Carroll and Price in~\cite{CP03} gave an expansion formula for cluster variables of type $A$ in terms of BCI tuples (see Fig.~\ref{fig:bci_trail}). 
Enumerating BCI tuples is equivalent to counting perfect matchings in a bipartite graph whose nodes are the triangles and vertices of (resp. a snake graph associated to) a triangulation; see Sec.~2 (resp. Sec.~4) of~\cite{Pro05}.  

A frieze is said to be \emph{infinite} if it is not bounded below by a row of $1$s.  An infinite frieze is said to be \emph{periodic} if there exists an integer $n \geq 1$ such that each row is invariant under horizontal translation by $n$, i.e. $\mathcal{F}_{ij} = \mathcal{F}_{i+n,j+n}$ for all $i \leq j$.
Infinite friezes of positive integers arising from once-punctured {\disks} were introduced in~\cite{Tsc15} by Tschabold. Given an ideal triangulation $T$ (in the sense of~\cite{FST08}) of a once-punctured {\disk} with $n$ marked boundary vertices labeled $v_1,v_ 2, \ldots, v_n$ counterclockwise around the boundary, 
we can count the number of BCI tuples 
in a similar way, see Fig.~\ref{fig:puncturedpentagon}.

In~\cite{BPT16}, Baur, Parsons, and Tschabold went further and gave a complete characterization of infinite friezes of positive integers via triangulations of quotients of an infinite strip in the plane. In this classification, periodic friezes arise from triangulations of the annulus or of the once-punctured disk (which can be thought of as a quotient of the infinite strip), see Fig.~\ref{fig:disc_as_AT}.  
An infinite frieze is said to be of type $D$ or type $\tilde{A}$, if it arises from a once-punctured {\disk} or annulus, respectively. 
Related work on friezes of type D and $\tilde{A}$ 
include~\cite{Sch08Dn, BM09, ARS10,BR10,Smi15,BFPT16,FP16,GS18}.

\begin{figure}[!htbp]
\centering
\subfigure{
\begin{tikzpicture}[scale=0.8,font=\small]
\node (a) at (0,0) [fill,circle,inner sep=1pt] {};
\draw (0,0) circle (1.25cm);
\foreach \x in {-144,-72,0,72,144} {
   \begin{scope}[rotate=\x]
    \node (\x) at (0,-1.25) [fill,circle,inner sep=1pt] {};
   \end{scope}
}
\draw (-72) node [left] {$v_2$};
\draw (0) node [below] {$v_3$};
\draw (72) node [right] {$v_4$};
\draw (144) node [above right] {$v_5$};
\draw (-144) node [above left] {$v_1$};

\draw[thin] (a) to (-144);
\draw[thin] (a) to (72);
\draw[thin,out=110,in=0] (72) to  (-144);
\draw[thin,out=-110,in=130] (-144) to (0);
\draw[thin,out=-170,in=-45] (72) to (-0.3,-0.3) ;
\draw[thin,out=135,in=-90] (-0.3,-0.3) to(-144);

\fill(-.2,0.65) node[] {\tiny{$\tau_1$}};
\fill(-.7,-.6) node[] {\tiny{$\tau_2$}};
\fill(.2,-.7) node[] {\tiny{$\tau_3$}};
\fill(.6,0) node[] {\tiny{$\tau_4$}};
\fill(.85,.6) node[] {\tiny{$\tau_5$}};

\fill(3,-0.5) node[] {$\rightsquigarrow$};
\end{tikzpicture}

\quad

		\begin{tikzpicture}[scale = .4]
		\tikzstyle{every node} = [font = \small]
		\foreach \x in {0}
		{
			\foreach \y in {-8}
			{
				\draw[->] (\x-7,\y-3) -- (\x+6,\y-3);
			
				\foreach \t in {-6,-4,-2,0,2,4}
				{
					\fill (\x+\t,\y-3) circle (.1);
				}

				\fill (\x-6,\y-3) node [below] {$v_1$};
				\fill (\x-4,\y-3) node [below] {$v_2$};
				\fill (\x-2,\y-3) node [below] {$v_3$};
				\fill (\x+0,\y-3) node [below] {$v_4$};
				\fill (\x+2,\y-3) node [below] {$v_5$};
				\fill (\x+4,\y-3) node [below] {$v_1$};
				
				\Prufer{\x-6}{\y-3}{12}{}
				\fill (\x-6,\y-1.5) node [right] {\tiny{$\tau_1$}};
				\draw[] (\x-6,\y-3) .. controls (\x-5,\y-2) and (\x-3,\y-2) .. (\x-2,\y-3);
				\fill (\x-2.3,\y-2.9) node [above] {\tiny{$\tau_2$}};
				\draw[] (\x-6,\y-3) .. controls (\x-4.5,\y-1.25) and (\x-1.5,\y-1.25) .. (\x-0,\y-3);
				\fill (\x-3,\y-1.8) node [above] {\tiny{$\tau_3$}};
				\Prufer{\x-0}{\y-3}{6}{}
				\fill (\x-0,\y-1.5) node [right] {\tiny{$\tau_4$}};	
				\draw[] (\x+0,\y-3) .. controls (\x+1,\y-2) and (\x+3,\y-2) .. (\x+4,\y-3);
				\fill (\x+2,\y-2.4) node [above] {\tiny{$\tau_5$}};	
				\Prufer{\x+4}{\y-3}{2}{}
				\fill (\x+4,\y-1.5) node [right] {\tiny{$\tau_1$}};	
			}
		}	
		\end{tikzpicture}
		}	
\caption{Triangulation of a once-punctured pentagon drawn as an asymptotic triangulation 
(see~\cite[Lemma 3.6]{BPT16}).}
\label{fig:disc_as_AT}
\end{figure}

In this paper, we expand on the work of~\cite{Tsc15, BPT16} and 
 study periodic infinite friezes whose entries are Laurent polynomials (as opposed to positive integers). Our first result (Theorem~\ref{thm:Frieze_Laurent}) is a construction of an infinite frieze where the entries of the frieze correspond to Laurent polynomials associated to \emph{generalized peripheral arcs} (Definition~\ref{defn:array_of_peripheral_arcs}).  Here, our construction uses a once-punctured disk or annulus with marked points on its outer boundary.  A \emph{peripheral arc} is a curve that has these aforementioned marked points as its endpoints while possibly wrapping around the inner boundary or the internal puncture.  We extend our construction to \emph{generalized} peripheral arcs by allowing curves with self-intersections.  See Figure \ref{fig:univ_cover}.  Our association of Laurent polynomials to generalized peripheral arcs appeared in more generality in \cite{MW13,MSW13}, where it appeared in the context of cluster algebras from surfaces.  See Section~\ref{sec:cluster_algebras_from_surfaces}.  Note that it follows from this association that any periodic friezes considered in~\cite{Tsc15, BPT16} can be obtained from our construction by specializing all variables of Laurent polynomials to one.

We go on to describe nice symmetries and properties that this frieze pattern satisfies.
In Section~\ref{sec:proof_progression_formulas}, we introduce \emph{complementary arcs}, which are arcs between the same two vertices in a surface, but of alternate direction. Complementary arcs give rise to a special type of symmetry, which we call \emph{complement symmetry}, in an infinite frieze. This complement symmetry reduces to glide-symmetry in a finite frieze pattern because complementary arcs in 
a polygon
give rise to the same entry (matching number/arc/Laurent polynomial) in a finite frieze.
We use these complementary arcs to describe \emph{progressions} of arcs in the frieze   (Theorem~\ref{thm:progression_formula}).
In Section~\ref{sec:brac_and_growth}, we discuss \emph{growth coefficients} (first defined in~\cite{BFPT16}) of the frieze, and show that they are equal to Laurent polynomials corresponding to certain curves called \emph{bracelets} in the surface. 
Bracelets are associated to important cluster algebra elements~\cite{SZ04,DT13,MW13,MSW13,DT14,CS15a,CS15b}.  
In Section \ref{sec:recursive}, we state further algebraic and combinatorial results involving the relationship between complementary arcs. 

Finally, recall from above, the work of Broline, Crowe, and Isaacs \cite{BCI74} provided one of the original combinatorial interpretations for entries of finite friezes outside of the quiddity row, namely BCI tuples.  For infinite friezes, the analogous definition has not previously appeared, although different combinatorial interpretations for peripheral arcs, via T-paths, have appeared (as in~\cite{Sch08Tpath,ST09,GM15}).
Inspired by this, in Appendix~\ref{appendix:BCI_Tpath}, we extend previous unpublished work of Carroll-Price~\cite{CP03} to provide a bijection between BCI tuples~\cite{BCI74} and $T$-paths.  Our bijection yields an expansion formula for cluster variables in terms of BCI tuples (Corollary~\ref{rem:bci_formula}) and 
preserves a natural distributive lattice structure which the $T$-paths are known to have (Proposition~\ref{thm:lattice_tuple1}).

We begin our article with Section~\ref{sec:cluster_algebras_from_surfaces}, which introduces the necessary background material, including  the notation and terminology of arcs, triangulations, and cluster algebras that we will use throughout the article.  We also recall cluster algebra elements associated to generalized arcs and closed loops (with or without self-crossings) via snake graphs and band graphs, as per~\cite{MW13,MSW11,MSW13}.  Then in Section~\ref{sec:background_infinite_frieze}, we recall some facts about infinite friezes, and explain how triangulations (of once-punctured {\disks} and annuli) give rise to infinite friezes.  The remainder of our article provides the statements of our results and their proofs as previewed above. 

\section*{Acknowledgments}
E. Gunawan and G. Musiker were supported by NSF Grants DMS-1148634 and DMS-1362980. 
H. Vogel was supported by the Austrian Science Fund (FWF): projects No.\ P25141-N26 and W1230, and acknowledges support from NAWI Graz. She would also like to thank the University of Minnesota for hosting her during her stay in the Winter of 2016.

We thank Karin Baur for helpful comments, and Manuela Tschabold for allowing us to use some of her tikz figures. Some of the images and Laurent polynomial computation was done with the help of {\sc SageMath}~\cite{sage, combinat} and code written by Ana Garc\'{i}a Elsener and Jorge Nicol\'{a}s L\'{o}pez. We also thank the referees for many useful comments. 

\section{Cluster algebras from surfaces}
\label{sec:cluster_algebras_from_surfaces}

We provide a brief background on cluster algebras arising from marked surfaces following Fomin, Shapiro, and Thurston \cite{FST08}. 

\subsection{Triangulations of marked surfaces}
\label{subsec:triangulations}

\begin{defn}
[marked surface]
Let $S$ be a connected, oriented, Riemann surface with (possibly empty) boundary, and $M$ a non-empty, finite set of marked points in the closure of $S$, such that there is at least one marked point on each boundary component of $S$. Then $(S,M)$ is called a \emph{marked surface}, and the interior marked points of $S$ are called \emph{punctures}. 
\end{defn}

For technical reasons, assume that $(S,M)$ is not the following: a sphere with fewer than four punctures; a monogon with zero or one puncture; or a bigon or triangle without punctures.

\begin{defn}[ordinary arc]
An \emph{ordinary arc $\gamma$ in $(S,M)$} is a curve in $S$, considered up to isotopy, such that: (1) the endpoints of $\gamma$ are in $M$, (2) $\gamma$ does not cross itself (except its endpoints may coincide), (3) the interior of $\gamma$ is disjoint from $M$ and from the boundary of $S$, and (4) $\gamma$ does not cut out an unpunctured monogon or bigon.  
A \emph{boundary edge} is a curve that connects two marked points and lies entirely on the boundary of $S$ without passing through a third marked point.
\end{defn}
We say that two ordinary arcs $\alpha, \beta$ are \emph{compatible} if there exist representatives $\alpha', \beta'$ in their respective isotopy classes such that $\alpha'$ and $\beta'$ do not intersect in the interior of $S$.

\begin{defn}[ideal triangulation]
An \emph{ideal triangulation} is a maximal (by inclusion) collection of distinct, pairwise compatible ordinary arcs.
The ordinary arcs of an ideal triangulation cut the surface into \emph{ideal triangles} (see Fig.~\ref{fig:possible_types_of_ideal_triangles}).
\end{defn}

\begin{rem}[possible types of ideal triangles]
There are two types of ideal triangles in a triangulation: triangles that have three distinct sides (Figs.~\ref{fig:TikzTriangle}, ~\ref{fig:TikzTwoVertexTriangle}, and ~\ref{fig:TikzOneVertexTriangle}), and \emph{self-folded triangles} (Fig.~\ref{fig:TikzSelfFoldedTriangle}). A self-folded triangle consists of an arc $\ell$ (which we will refer to as an \emph{$\ell$-loop}) whose endpoints coincide, along with an arc $r$ (called a \emph{radius}) that goes from the endpoint of $\ell$ to an enclosed puncture. 
\end{rem}

\begin{figure}[h]
\centering
\mbox{\subfigure[Ordinary triangle.]
{
\TikzTriangle{0.5}
\label{fig:TikzTriangle}
}
\subfigure[Two vertices.]
{
\TikzTwoVertexTriangle{0.7}
\label{fig:TikzTwoVertexTriangle}
}
\subfigure[One vertex, 3 edges.]
{
\makebox[1.1\width]
{
\TikzOneVertexTriangle{0.7}
\label{fig:TikzOneVertexTriangle}
}
}
\subfigure[Self-folded triangle.]
{\quad
\TikzSelfFoldedTriangle{0.7}\quad
\label{fig:TikzSelfFoldedTriangle}
}
}
\caption{Possible types of ideal triangles.}
\label{fig:possible_types_of_ideal_triangles}
\end{figure}

\begin{defn}[peripheral arcs]\label{defn:types_of_arcs}
Let $\ga$ be an ordinary arc of a surface $(S,M)$ with nonempty boundary.
Following \cite{DT13,BD14}, we say that $\ga$ is a \emph{peripheral arc} if: (1) both its endpoints (or its unique endpoint in the case of a loop) are on a single boundary component $Bd$ of $S$, and (2) $\ga$ is isotopic to a concatenation of two or more boundary edges of a boundary component $Bd$.  Examples of non-peripheral arcs are those connecting a marked point on a boundary component to a puncture (or a different boundary component).
\end{defn}

\begin{defn}[flip for ordinary arc]
A \emph{flip} is a move that replaces an ordinary arc $\gamma$ in an ideal triangulation $T$ with the unique arc $\gamma' \neq \gamma$ such that $(T\setminus \gamma) \cup \gamma'$ forms a new ideal triangulation.
\end{defn}

Any two ideal triangulations of a surface are connected to each other by a sequence of flips (see Fig.~\ref{fig:seq_flips} for an example).

\begin{figure}[h!]
\centering
\def\localpicscale{.45}
\begin{tikzpicture}[scale = \localpicscale]
		\tikzstyle{every node} = [font = \small]
		\foreach \x in {0}
		{
			\foreach \y in {-8}
			{
			\fill (\x,\y+1.75) circle (.05);
			\fill(\x+1.5158,\y+.875) circle (.05);
			\fill (\x+1.5158,\y-.875) circle (.05);
			\fill(\x,\y-1.75) circle (.05);
			\fill(\x-1.5158,\y-.875) circle (.05);
			\fill (\x-1.5158,\y+.875) circle (.05);
		
			\draw [] (\x,\y+1.75)--(\x+1.5158,\y+.875); 
			\draw [] (\x+1.5158,\y+.875) -- (\x+1.5158,\y-.875); 
			\draw [] (\x+1.5158,\y-.875)--(\x,\y-1.75); 
			\draw [] (\x,\y-1.75) -- (\x-1.5158,\y-.875); 
			\draw [] (\x-1.5158,\y-.875)--(\x-1.5158,\y+.875); 
			\draw [] (\x-1.5158,\y+.875) -- (\x,\y+1.75); 
		
			\draw [] (\x,\y+1.75) -- (\x-1.5158,\y-.875);
			\draw [] (\x,\y+1.75) -- (\x,\y-1.75);
			\draw [] (\x,\y+1.75) -- (\x+1.5158,\y-.875);
		
			}
		}
		\draw[red,<->] (3,-8) -- (4,-8);
	\end{tikzpicture}
	\quad
	\begin{tikzpicture}[scale = \localpicscale]
		\tikzstyle{every node} = [font = \small]
		\foreach \x in {0}
		{
			\foreach \y in {-8}
			{
			\fill (\x,\y+1.75) circle (.05);
			\fill(\x+1.5158,\y+.875) circle (.05);
			\fill (\x+1.5158,\y-.875) circle (.05);
			\fill(\x,\y-1.75) circle (.05);
			\fill(\x-1.5158,\y-.875) circle (.05);
			\fill (\x-1.5158,\y+.875) circle (.05);
		
			\draw [] (\x,\y+1.75)--(\x+1.5158,\y+.875); 
			\draw [] (\x+1.5158,\y+.875) -- (\x+1.5158,\y-.875); 
			\draw [] (\x+1.5158,\y-.875)--(\x,\y-1.75); 
			\draw [] (\x,\y-1.75) -- (\x-1.5158,\y-.875); 
			\draw [] (\x-1.5158,\y-.875)--(\x-1.5158,\y+.875); 
			\draw [] (\x-1.5158,\y+.875) -- (\x,\y+1.75); 
		
			\draw [] (\x,\y+1.75) -- (\x-1.5158,\y-.875);
			\draw [] (\x,\y+1.75) -- (\x,\y-1.75);
			\draw [red,ultra thick] (\x,\y-1.75) -- (\x+1.5158,\y+.875);
		
			}
		}
		\draw[blue, <->] (3,-8) -- (4,-8);
	\end{tikzpicture}
		\quad
	\begin{tikzpicture}[scale = \localpicscale]
		\tikzstyle{every node} = [font = \small]
		\foreach \x in {0}
		{
			\foreach \y in {-8}
			{
			\fill (\x,\y+1.75) circle (.05);
			\fill(\x+1.5158,\y+.875) circle (.05);
			\fill (\x+1.5158,\y-.875) circle (.05);
			\fill(\x,\y-1.75) circle (.05);
			\fill(\x-1.5158,\y-.875) circle (.05);
			\fill (\x-1.5158,\y+.875) circle (.05);
		
			\draw [] (\x,\y+1.75)--(\x+1.5158,\y+.875); 
			\draw [] (\x+1.5158,\y+.875) -- (\x+1.5158,\y-.875); 
			\draw [] (\x+1.5158,\y-.875)--(\x,\y-1.75); 
			\draw [] (\x,\y-1.75) -- (\x-1.5158,\y-.875); 
			\draw [] (\x-1.5158,\y-.875)--(\x-1.5158,\y+.875); 
			\draw [] (\x-1.5158,\y+.875) -- (\x,\y+1.75); 
		
			\draw [] (\x,\y+1.75) -- (\x-1.5158,\y-.875);
			\draw [blue, ultra thick] (\x-1.5158,\y-.875) -- (\x+1.5158,\y+.875);
			\draw [] (\x,\y-1.75) -- (\x+1.5158,\y+.875);
		
			}
		}
	\end{tikzpicture}
	\quad 
	\begin{tikzpicture}[scale = \localpicscale]
		\tikzstyle{every node} = [font = \small]
		
		\draw[violet, <->] (-3,-8) -- (-4,-8);
		
		\foreach \x in {0}
		{
			\foreach \y in {-8}
			{
			\fill (\x,\y+1.75) circle (.05);
			\fill(\x+1.5158,\y+.875) circle (.05);
			\fill (\x+1.5158,\y-.875) circle (.05);
			\fill(\x,\y-1.75) circle (.05);
			\fill(\x-1.5158,\y-.875) circle (.05);
			\fill (\x-1.5158,\y+.875) circle (.05);
		
			\draw [] (\x,\y+1.75)--(\x+1.5158,\y+.875); 
			\draw [] (\x+1.5158,\y+.875) -- (\x+1.5158,\y-.875); 
			\draw [] (\x+1.5158,\y-.875)--(\x,\y-1.75); 
			\draw [] (\x,\y-1.75) -- (\x-1.5158,\y-.875); 
			\draw [] (\x-1.5158,\y-.875)--(\x-1.5158,\y+.875); 
			\draw [] (\x-1.5158,\y+.875) -- (\x,\y+1.75); 
		
			\draw [violet, ultra thick] (\x-1.5158,\y+.875) -- (\x+1.5158,\y+.875);
			\draw [] (\x-1.5158,\y-.875) -- (\x+1.5158,\y+.875);
			\draw [] (\x,\y-1.75) -- (\x+1.5158,\y+.875);
		
			}
		}
		\draw[mygreen, <->] (3,-8) -- (4,-8);
	\end{tikzpicture}
		\quad
	\begin{tikzpicture}[scale = \localpicscale]
		\tikzstyle{every node} = [font = \small]
		\foreach \x in {0}
		{
			\foreach \y in {-8}
			{
			\fill (\x,\y+1.75) circle (.05);
			\fill(\x+1.5158,\y+.875) circle (.05);
			\fill (\x+1.5158,\y-.875) circle (.05);
			\fill(\x,\y-1.75) circle (.05);
			\fill(\x-1.5158,\y-.875) circle (.05);
			\fill (\x-1.5158,\y+.875) circle (.05);
		
			\draw [] (\x,\y+1.75)--(\x+1.5158,\y+.875); 
			\draw [] (\x+1.5158,\y+.875) -- (\x+1.5158,\y-.875); 
			\draw [] (\x+1.5158,\y-.875)--(\x,\y-1.75); 
			\draw [] (\x,\y-1.75) -- (\x-1.5158,\y-.875); 
			\draw [] (\x-1.5158,\y-.875)--(\x-1.5158,\y+.875); 
			\draw [] (\x-1.5158,\y+.875) -- (\x,\y+1.75); 
		
			\draw [] (\x-1.5158,\y+.875) -- (\x+1.5158,\y+.875);
			\draw [] (\x-1.5158,\y-.875) -- (\x+1.5158,\y+.875);
			\draw [mygreen, ultra thick] (\x-1.5158,\y-.875) -- (\x+1.5158,\y-.875);
		
			}
		}
	\end{tikzpicture}
\caption{Sequence of flips.}
\label{fig:seq_flips}
\end{figure}

When working with a cluster algebra associated to a surface with punctures, it is not sufficient to work with ordinary arcs and ideal triangulations.
The authors of \cite{FST08} introduced \emph{tagged arcs} and \emph{tagged triangulations}, and showed that they are in bijection with cluster variables and clusters.

\begin{defn}
[tagged arcs] \label{defn:tagged}
A \emph{tagged arc} is obtained by taking an ordinary arc (that is not an $\ell$-loop) $\gamma$ and marking (``tagging'') each end $\gamma$ with one of two options, \emph{plain} or \emph{notched}, such that:
\begin{enumerate}[1)]
\item an endpoint lying on the boundary must be tagged plain, and 
\item both ends of a loop must be tagged the same way.
\end{enumerate}
A notched tagging is usually indicated by a bow tie, and a plain tagging is usually denoted by no marking. Note that a tagged arc never cuts out a once-punctured monogon, i.e., an $\ell$-loop is \emph{not} a tagged arc.
For a list of tagged arcs, see \cite[Remark 7.3]{FST08}.
\end{defn}

Compatibility of two tagged arcs is defined in \cite[Def. 7.4]{FST08}. A maximal (by inclusion) collection of distinct, pairwise compatible tagged arcs is called a \emph{tagged triangulation}. Fig.~\ref{fig:tagged_tri} (center) gives an example of a tagged triangulation.
The \emph{flip} of a tagged arc is defined in \cite[Section 9.3]{FST08}.

\begin{figure}[h!]
\begin{tikzpicture}[scale=1.2]
\node (a) at (0,0) [fill,violet,circle,inner sep=1pt] {};
\draw (0,0) circle (1.25cm);
\foreach \x in {-144,-72,0,72,144} {
   \begin{scope}[rotate=\x]
    \node (\x) at (0,-1.25) [fill,circle,inner sep=1pt] {};
   \end{scope}
}

\draw[thin] (a) to (-144);

\draw[thin,
out=-140,in=-80] (0.2,-0.2) to (-144);
\draw[thin,
out=-30,in=40] (-144) to  (0.2,-0.2);

\draw[thin,out=110,in=0] (72) to  (-144);
\draw[thin,out=-110,in=130] (-144) to (0);
\draw[thin,out=-170,in=-45] (72) to (-0.3,-0.3) ;
\draw[thin,out=135,in=-90] (-0.3,-0.3) to(-144);

\fill(-.3,0.65) node[] {{$\tau_1$}};
\fill(-.7,-.6) node[] {{$\tau_2$}};
\fill(.2,-.7) node[] {{$\tau_3$}};
\fill(.5,0) node[] {{$\tau_4$}};
\fill(.85,.5) node[] {{$\tau_0$}};

\end{tikzpicture}
\quad
\begin{tikzpicture}[scale=1.2]
\node (a) at (0,0) [fill,violet,circle,inner sep=1pt] {};
\draw (0,0) circle (1.25cm);
\foreach \x in {-144,-72,0,72,144} {
   \begin{scope}[rotate=\x]
    \node (\x) at (0,-1.25) [fill,circle,inner sep=1pt] {};
   \end{scope}
}



\draw[thin,out=110,in=0] (72) to  (-144);
\draw[thin,out=-110,in=130] (-144) to (0);
\draw[thin,out=-170,in=-45] (72) to (-0.3,-0.3) ;
\draw[thin,out=135,in=-90] (-0.3,-0.3) to(-144);

\fill(-.5,0.65) node[] {{$\tau_1$}};
\fill(0,0.55) node[] {{$\tau_4$}};

\fill(-.7,-.6) node[] {{$\tau_2$}};
\fill(.2,-.7) node[] {{$\tau_3$}};

\fill(.85,.5) node[] {{$\tau_0$}};

\draw[
out=280,in=160] (-144) to  (a);
\draw[
out=100,in=-20] (a) to  (-144);
\draw[
out=110,in=0] (72) to  (-144);


\begin{scope}[scale=0.5]
\foreach \x in {-1.1} {
    \foreach \y in {-1.9}{
    \path[fill, black] (\x+1,\y+2.5) -- (\x+1.1,\y+2.8) -- (\x+1.3,\y+2.5) -- cycle; 
    \path[fill, black] (\x+1,\y+2.5) -- (\x+0.9,\y+2.2) -- (\x+0.7,\y+2.5) -- cycle; 
    }
}
\end{scope}
\end{tikzpicture}
\quad
\begin{tikzpicture}
\matrix [matrix of math nodes,left delimiter={(},right delimiter={)}]
{
0 & 1 & 0 & -1 & 1 \\
-1 & 0 & 0 & 1 & 0 \\
0 & 0 & 0 & -1 & 0 \\
1 & -1 & 1 & 0 & -1 \\
-1 & 0 & 0 & 1 & 0\\
};
\end{tikzpicture}
\caption{An ideal triangulation \& a corresponding tagged triangulation of the once-punctured $5$-gon, and the corresponding signed adjacency matrix.}
\label{fig:tagged_tri}
\end{figure}

\begin{defn}
[representing ordinary arcs as tagged arcs]
\label{def:representing_ordinary_arcs_as_tagged_arcs}
Any ordinary arc $\gamma$ can be represented by a tagged arc $\iota (\gamma)$ as follows.
Suppose $\gamma$ is an $\ell$-loop (based at marked point $v$) which encloses a puncture $\puncture$. Let $r$ be the unique arc which is compatible with $\gamma$ and which connects~$v$ and~$\puncture$.
Then~$\iota(\gamma)$ is obtained by tagging $r$ plain at $v$ and notched at~$p$.
Otherwise, $\iota(\gamma)$ is simply~$\gamma$ tagged plain at both endpoints.
For example, see Fig.~\ref{fig:tagged_tri}(left \& center).
\end{defn}

\subsection{From surfaces to cluster algebras}
\label{subsec:clusters}

We can associate an exchange matrix {\cite[Def. 4.1 and 9.6]{FST08}}, and hence a cluster algebra, to $(S,M)$.
Note that our convention agrees with \cite{Sch10,MS10} but is opposite of the more recent papers~\cite{MSW11,MW13,MSW13}.
\begin{defn}[{\signedadjacencymatrixofanidealtriangulation}]
\label{defn:signed_adjacency_matrix_ideal}
Let $T$ be an ideal triangulation, and $\tau_1,\tau_2,\ldots,\tau_n$ arcs of $T$. For any non-self folded triangle $\Delta$ in $T$, we define a matrix $B^\Delta=\left( b^\Delta_{ij} \right)_{1\le i\le n, 1\le j\le n}$ as follows:
\begin{itemize}
\item $b_{ij}^\Delta=1$ and $b_{ji}^{\Delta}=-1$ in the following cases:
\begin{itemize}
\item[(a)] $\tau_i$ and $\tau_j$ are sides of $\Delta$ with  $\tau_j$ following $\tau_i$  in the {\ellOpOrientation} order;
\item[(b)] $\tau_j$ is a radial arc in a self-folded triangle enclosed by an $\ell$-loop $\tau_\ell$, and $\tau_i$ and $\tau_\ell$ are sides of $\Delta$ with  $\tau_\ell$ following $\tau_i$  in the {\ellOpOrientation} order;
\item[(c)] $\tau_i$ is a radial arc in a self-folded triangle enclosed by an $\ell$-loop $\tau_\ell$, and $\tau_\ell$ and $\tau_j$ are sides of $\Delta$ with  $\tau_j$ following $\tau_\ell$ in the {\ellOpOrientation} order;
\end{itemize}
\item $b_{ij}^\Delta=0$ otherwise.
\end{itemize}
 
Then define the \emph{signed adjacency matrix} $B_{T}=(b_{ij})_{1\le i\le n, 1\le j\le n}$ of $T$ 
 by
$b_{ij}=\sum_{\Delta} b_{ij}^\Delta$, where the sum is taken over all
triangles in $T$ that are not self-folded. 
\end{defn}

\begin{defn}[signed adjacency matrix of a tagged triangulation]
\label{defn:signed_adjacency_matrix_tagged}
Let $T$ be a tagged triangulation. 
From $T$, we construct a tagged triangulation $\widehat{T}$ as follows: for each puncture such that all endpoints are notched, we change their tags to plain. 
Let  $T^o$ be the ideal triangulation which is represented by $\widehat{T}$.
For each tagged arc in $T$, the corresponding ordinary arc in $T^o$ retains the same label.
The \emph{signed adjacency matrix} $B_T$ of $T$ is defined to be the signed adjacency matrix $B_{T^o}$ of $T^o$ (as in Definition \ref{defn:signed_adjacency_matrix_ideal}).
See Fig.~\ref{fig:tagged_tri}(right) for an example.
\end{defn}

\begin{thm}[\cite{FST08} Theorem 7.11, and \cite{FT12} Theorem 6.1]
\label{thm:FST}
\label{thm:fst_thm}
Let $(S,M)$ be a marked surface, and let $\mathcal{A} = \mathcal{A}(S,M)$ be the coefficient-free cluster algebra associated to the signed adjacency matrix of a tagged triangulation 
(as in Definition \ref{defn:signed_adjacency_matrix_tagged}).
Then the (unlabeled) seeds $\Sigma_{T}$ of $\mathcal{A}$ are in bijection with tagged triangulations $T$ of $(S,M)$, and the cluster variables are in bijection with the tagged arcs of $(S,M)$ (so we can denote each cluster variable by $x_{\gamma}$ or $x(\gamma)$, where $\gamma$ is a tagged arc). Moreover, each seed in $\mathcal{A}$ is uniquely determined by its cluster. Furthermore, if a tagged triangulation $T'$ is obtained from another tagged triangulation $T$ by flipping a tagged arc $\gamma\in T$ and obtaining $\gamma'$, then $\Sigma_{T'}$ is obtained from $\Sigma_{T}$ by the seed mutation replacing $x_{\gamma}$ by $x_{\gamma'}$.
\end{thm}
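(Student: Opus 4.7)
My plan is to follow the strategy used by Fomin--Shapiro--Thurston and Fomin--Thurston: first establish the ``flips $=$ mutations'' compatibility at the level of the combinatorial model, then transfer cluster-algebraic structure from a single initial seed to every tagged triangulation via the connectivity of the flip graph, and finally check injectivity of the resulting correspondence between tagged arcs and cluster variables.

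First I would prove the key local statement: if $T'$ is obtained from an ideal triangulation $T$ by flipping an arc $\gamma \mapsto \gamma'$, then the signed adjacency matrix $B_{T'}$ equals the mutation $\mu_{\gamma}(B_T)$ of $B_T$ in the direction indexed by $\gamma$. This is a direct check by cases on the type of triangles meeting $\gamma$ (two ordinary triangles forming a quadrilateral; a self-folded triangle glued to an ordinary triangle; etc.), comparing the change in the contributions $b_{ij}^{\Delta}$ from Definition~\ref{defn:signed_adjacency_matrix_ideal} against the Fomin--Zelevinsky mutation rule. For tagged triangulations one extends this through the ``tag-changing'' operation of Definition~\ref{defn:signed_adjacency_matrix_tagged}, verifying that simultaneously retagging all ends at a puncture preserves the exchange matrix.

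Next I would show that the flip graph of tagged triangulations of $(S,M)$ is connected: any two tagged triangulations can be joined by a sequence of flips. For ordinary triangulations this is classical (Harer, Hatcher, Mosher), and the tagged case reduces to the ordinary one by using the representation $\iota$ from Definition~\ref{def:representing_ordinary_arcs_as_tagged_arcs} together with retagging at punctures. Combined with the previous step, this lets me define a map $T \mapsto \Sigma_{T}$ consistently from any initial choice of tagged triangulation $T_0$: picking a sequence of flips from $T_0$ to $T$ and composing the corresponding mutations gives a seed $\Sigma_{T}$, and well-definedness follows because any two such sequences are related by the relations of the mutation groupoid, which are themselves matched by relations in the flip complex (this is the main technical obstacle of the original proof, requiring a careful analysis of pentagon and square relations among flips, including the subtle cases involving self-folded triangles that are handled in \cite{FT12}).

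Finally I would prove that the assignment $\gamma \mapsto x_{\gamma}$ is a bijection between tagged arcs and cluster variables. Surjectivity is immediate: every cluster variable arises in some seed $\Sigma_{T}$, and each entry of that cluster is indexed by an arc of $T$. Injectivity is the delicate part: two distinct tagged arcs $\gamma \neq \gamma'$ must yield distinct cluster variables. I would argue this using the Laurent phenomenon together with a combinatorial model (snake/band graphs as referenced in the introduction, or the shear-coordinate/lambda-length realization of \cite{FT12}) from which the denominator vector or $g$-vector can be read off; distinct arcs yield distinct denominator vectors, so distinct Laurent expressions. The uniqueness of the seed determined by its cluster then follows from the fact that the underlying exchange matrix $B_T$ can be recovered from the cluster because flips are encoded in the surface combinatorics. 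This completes the bijection between tagged triangulations and (unlabeled) seeds, and the statement about flips matching mutations is already in hand from the first step.
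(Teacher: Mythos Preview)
The paper does not prove this theorem at all; it is quoted as background from \cite{FST08} and \cite{FT12} and used without proof, so there is no ``paper's own proof'' to compare against. Your outline is a reasonable high-level sketch of the Fomin--Shapiro--Thurston/Fomin--Thurston argument, but note that in the context of this paper no proof is expected: the statement functions purely as a cited result underpinning the surface/cluster-algebra dictionary that the rest of the paper relies on.
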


If $\ell$ is an unnotched $\ell$-loop 
which encloses a radius $r$ and a puncture $\puncture$, then we set $x_\ell = x_r \, x_{r\notchP}$, where $r\notchP$ denote the arc obtained from $r$ by changing its notching at $\punctureP$. 
If $\tau$ is a boundary edge, we set $x_\tau:=1$.

\subsection{Generalized arcs and closed loops}
\label{subsec:background_generalized_arcs_and_loops}

In \cite{MSW11}, the second author, Schiffler, and Williams gave a combinatorial formula for the Laurent expansion of any cluster variable in a cluster algebra associated to a marked surface. Their expansion formula, which is a weighted sum over perfect matchings of a \emph{planar snake graph}, yields a cluster variable for any arc in the surface.
In \cite{MSW13,MW13}, the same authors generalized this construction and associated cluster algebra elements to \emph{generalized arcs}, as well as to \emph{closed loops} (with or without self-crossings). Instead of perfect matchings of a planar graph, the Laurent polynomial associated to a closed curve is a weighted sum over good matchings in a \emph{band graph} on a M\"obius strip or annulus. In coefficient-free settings, these constructions for generalized arcs and loops work even in the existence of punctures.

\begin{defn}
[generalized arcs]\label{defn:generalized_arc}
A \emph{generalized (ordinary) arc} in $(S,M)$ is a curve $\gamma$ in $S$, considered up to isotopy, such that 
\begin{enumerate}
\item the endpoints of $\ga$ are in $M$, 
\item the interior of $\ga$ is disjoint from $M$ and the boundary of $S$, and 
\item $\ga$ does not cut out an unpunctured  bigon or monogon. In other words, $\ga$ is not contractible to a point, and $\ga$ is not isotopic to a boundary edge.
\end{enumerate}
Generalized arcs are allowed to intersect themselves a finite number of times (possibly $0$). We consider these arcs up to isotopy of immersed arcs, that is, allowing Reidemeister moves of types II and III but not of type I.
In particular, an isotopy cannot remove a contractible kink from a generalized arc. If an arc intersects itself, we say that the arc has a \emph{self-crossing}.
\end{defn}

\begin{defn}[generalized peripheral arc]\label{defn:generalized_peripheral_arc}
Suppose $(S,M)$ contains a boundary component $Bd$.
We say that a generalized (ordinary) arc $\ga$ is a \emph{generalized peripheral arc on $Bd$} 
if $\ga$ starts at a marked point on $Bd$, wraps finitely (possibly 0) many times around $Bd$ and then ends at a marked point on $Bd$ (possibly at the same starting point).
Furthermore, as in Definition~\ref{defn:types_of_arcs},
a generalized peripheral arc on $Bd$ is isotopic to a concatenation of two or more boundary edges of $Bd$.
Our convention is to choose the orientation of $\ga$ so that $Bd$ is to the right of $\ga$ when looking from above.
\end{defn}

\begin{rem} \label{rem:gamIJ}
In the case of the once-punctured {\disk} (respectively, annulus), we can draw a generalized arc in the universal cover as in \cite[Sec.~3.3]{BPT16}.  In the universal cover, we identify generalized peripheral arcs $\gamma(i,j)$ on the (lower) boundary with their two endpoints, labeled with $i,j \in \mathbb{Z}$.

\end{rem}

\begin{figure}[h]
	\begin{center}
	
        \begin{tikzpicture}
        \draw (0,0) circle (1.25cm);
        \foreach \x in {-144,-72,0,72,144} {
           \begin{scope}[rotate=\x]
            \node (\x) at (0,-1.25) [fill,circle,inner sep=1pt] {};
           \end{scope}
        }
        
        \filldraw[fill=gray!20](0,0) circle (.25cm);
        \draw (144) node [above right] {$4$};
        \draw (72) node [right] {$3$};
        \draw (0) node [below] {$2$};
        \draw (-72) node [left] {$1$};
        \draw (-144) node [above left] {$5$};
        
        \draw [ultra thick] (-.5,.0) to[out=90,in=125] (.4,.4) to[out=-180+135,in=30] (0);
        \draw [ultra thick] (-.5,.0) to[out=270,in=220] (.4,-.4) to[out=30,in=-55] (144);
        
        \end{tikzpicture}
	
	\begin{tikzpicture}[scale = .53]
		\tikzstyle{every node} = [font = \small]
		\foreach \x in {0}
		{
			\foreach \y in {-8}
			{
				\draw[black,thick,<-] (\x-15,\y+2) -- (\x+12,\y+2);
				\draw[black,thick,->] (\x-15,\y-2) -- (\x+12,\y-2);
				\draw[dashed] (\x-14,\y+2) -- (\x-14,\y-2);
				\draw[dashed] (\x-9,\y+2) -- (\x-9,\y-2);
				\draw[dashed] (\x-4,\y+2) -- (\x-4,\y-2);
				\draw[dashed] (\x+1,\y+2) -- (\x+1,\y-2);
				\draw[dashed] (\x+6,\y+2) -- (\x+6,\y-2);
				\draw[dashed] (\x+11,\y+2) -- (\x+11,\y-2);
				
				\draw (\x-13,\y-2) .. controls (\x-12,\y-.5) and (\x-7,\y-.5) .. (\x-6,\y-2);
				\draw (\x-8,\y-2) .. controls (\x-7,\y-.5) and (\x-2,\y-.5) .. (\x-1,\y-2);
				\draw (\x-3,\y-2) .. controls (\x-2,\y-.5) and (\x+3,\y-.5) .. (\x+4,\y-2);
				\draw (\x+2,\y-2) .. controls (\x+3,\y-.5) and (\x+8,\y-.5) .. (\x+9,\y-2);
				\draw (\x+7,\y-2) .. controls (\x+8,\y-.85) and (\x+10, \y-.85) ..  (\x+11,\y-.95);

				\foreach \t in {1,2,3,4,5,6,7,8,9,10,11,12,13,14,15,16,17,18,19,20,21,22,23,24,25,26}
				{
					\fill (-15+\t,\y-2) circle (.1);
					\fill (-15+\t,\y-2) node [below] {$\tiny{\t}$};
				}
			}
		}
		\end{tikzpicture}
	\end{center}
\caption{Top: The generalized peripheral arc $\gamma(2,9)$. Bottom: Universal cover of the annulus $C_{n,m}$, for $n=5$.}
\label{fig:univ_cover}
\end{figure}

\begin{ex}
In Fig.~\ref{fig:univ_cover}, we draw copies of the arc $\gamma(2,9)$ (top figure) along the lower (outer) boundary in the universal cover of an annulus with $5$ points on the outer boundary (bottom figure).
Note that the arc $\gamma(2,9)$ has a self-crossing in the annulus. 
This can be seen in the universal cover by the arc crossing into another \emph{frame} (denoted by the dashed lines). 
\end{ex}

\begin{defn}[closed loops]
A \emph{closed loop} in $(S,M)$ is a closed curve $\gamma$ in $S$ which is disjoint from the boundary of $S$. Again, we allow closed loops to have a finite number of self-crossings, and we consider closed loops up to isotopy. 
\end{defn}

\begin{defn}[bracelets]
\label{defn:bracelets}
A closed loop obtained by following a (non-contractible, non-self-crossing, kink-free)
loop $k$ times, and thus creating $k-1$ self-crossings, is called a \emph{$k$-bracelet} and is denoted by $Brac_k$. See Fig.~\ref{fig:bracelets}.
\end{defn}

\begin{figure}[h!]
\begin{center}
\vskip -2mm
\scalebox{0.5}{

\def\FigBraceletsScale{1}
\def\MacroFigBraceletsAnnulus{
\draw (0,0) circle (2cm);
\draw[fill=gray!20] (0,0) circle (.3cm);
}

\subfigure{
\begin{tikzpicture}[scale=\FigBraceletsScale]
\MacroFigBraceletsAnnulus
\draw[ thick, blue] (0,0) circle (1cm);
\end{tikzpicture}
}
\quad \quad
\subfigure{

\begin{tikzpicture}[scale=\FigBraceletsScale]
\MacroFigBraceletsAnnulus

\draw[ thick,blue, rounded corners] ([shift=(140:0.7cm)]0,0) arc (140:-200:0.7cm)
-- ([shift=(140:1.1cm)]0,0) 
arc (140:-200:1.1cm) -- cycle;
\end{tikzpicture}

}
\quad \quad
\subfigure{
\begin{tikzpicture}[scale=\FigBraceletsScale]
\MacroFigBraceletsAnnulus

\draw[ thick,blue, rounded corners] ([shift=(140:0.7cm)]0,0) arc (140:-200:0.7cm)
-- ([shift=(140:1.1cm)]0,0)
arc (140:-200:1.1cm)
--  ([shift=(140:1.5cm)]0,0) 
arc (140:-200:1.5cm)
-- cycle; 
\end{tikzpicture}
}

\end{center}
\caption{Bracelets $Brac_1$, $Brac_2$, and $Brac_3$.}\label{fig:bracelets}
\end{figure}

\subsection{Laurent polynomials associated to generalized arcs and closed loops}
\label{subsec:laurent_generalized_arcs}

Recall from Theorem  
\ref{thm:FST}
that for unlabeled seeds $\Sigma_T$ of a cluster algebra $\mathcal{A}$, the tagged arcs of $(S,M)$ are in bijection with the cluster variables, and we denote these variables by $x_\tau$ for $\tau$ a tagged arc. 

\begin{defn} [snake graph]
A \emph{snake graph} is a connected sequence of square tiles embedded in the plane.
    To build a snake graph, start with one tile, then glue a new tile so that
    the new tile is glued to the north or the east of the previous tile.
    Let $\ga$ be an ordinary generalized arc which is not an arc of an ideal triangulation $T$.
    A \emph{snake graph} associated to $\ga$ and $T$ is a weighted graph which is defined in \cite{MSW11,MW13}  (see also \cite{CS13}). For example, see Fig.~\ref{fig:generalized_arc_snakegraph}.
\end{defn}

\begin{defn} [band graph]
A \emph{band graph}, which lies on an annulus or a  M\"obius strip, is obtained from identifying two edges of a snake graph.
Let $\zeta$ be a loop. 
A \emph{band graph} associated to $\zeta$ and an ideal triangulation $T$ is as defined in \cite{MW13, MSW13} (see also \cite{CS15a}). 
For example, see Fig.~\ref{fig:band_laurent_example}.
\end{defn}

\begin{defn} [crossing monomial]
If $\gamma$ is a generalized arc or a closed loop, and $\tau_{i_1}, \tau_{i_2}, \ldots, \tau_{i_d}$ is the sequence of arcs in $T$ which $\gamma$ crosses, then the \emph{crossing monomial} of $\gamma$ with respect to $T$ is defined to be (see, for example, \cite[Def. 4.5]{MSW11}) 
\[\cross(T,\gamma) = \prod_{j=1}^d x_{\tau_{i_j}}.\]
\end{defn}

Recall that if $\tau$ is a boundary segment, we let $x_\tau:=1$.

\begin{defn}[weight of a perfect matching] 
A \emph{perfect matching} of a graph $G$ is a subset $P$ of the edges of $G$ such that each vertex of $G$ is incident to exactly one edge of $P$. If $G$ is a snake or band graph, and the edges of a perfect matching $P$ of $G$ are labeled $\tau_{j_1}, \ldots, \tau_{j_r}$, then we define the \emph{weight} $x(P)$ of $P$ to be $x_{\tau_{j_1}} \cdots x_{\tau_{j_r}}$.  See \cite[Def. 3.7]{MW13}.
\end{defn}

\begin{defn}[Laurent polynomial from a generalized arc]
\label{defn:generalized_arc_laurent_polynomial}
Let $(S,M)$ be a surface, $T$ an ideal triangulation, and $\mathcal{A}$ the cluster algebra 
associated to $B_T$. Let $\gamma$ be a generalized arc and let $G_{T,\gamma}$ denote its snake graph. We define a Laurent polynomial  which lies in (the fraction field) of $\mathcal{A}$.
\begin{enumerate}
\item If $\gamma$ cuts out a contractible monogon, then $X_\gamma^T$ is equal to zero.
\item If $\gamma$ has a contractible kink, let $\overline{\gamma}$ denote the corresponding tagged arc with this kink removed, and define $X_\gamma^T := (-1)X_{\overline{\gamma}}^T$.
\item Otherwise, define $$X_\gamma^T := \frac{1}{\cross(T,\gamma)} \sum_P x(P),$$ 
where the sum is over all perfect matchings $P$ of $G_{T,\gamma}$.  
\end{enumerate}
See \cite[Def. 3.12]{MW13}.
\end{defn} 

\begin{thm}[{\cite[Thm 4.10]{MSW11}}]
When $\gamma$ is an arc (with no self-crossings), $X_\gamma^T$ is equal to the Laurent expansion of the cluster variable $x_\gamma \in \mathcal{A}$ with respect to the seed $\Sigma_T$.
\end{thm}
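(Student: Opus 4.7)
The plan is to prove the identity by induction that mirrors the mutation structure of the cluster algebra, reducing to a single combinatorial identity for snake graphs coming from a flip.

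First I would handle the base case: verify that if $\gamma$ is itself an arc of $T$, then $X_\gamma^T = x_\gamma$. In that case, by the construction of the snake graph recalled in \cite{MSW11}, $G_{T,\gamma}$ is a trivial (one- or two-tile) graph whose matching sum divided by $\cross(T,\gamma)$ simplifies to $x_\gamma$, and boundary segments contribute $1$ by convention. More generally, for every boundary edge one checks the formula trivially agrees with $x_\tau = 1$.

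Next I would prove the central \emph{Ptolemy / skein} identity for the combinatorial invariant $X_\gamma^T$. Suppose $\gamma$ crosses some arc $\tau_i \in T$, and let $\gamma_a, \gamma_b, \gamma_c, \gamma_d$ be the four arcs obtained by smoothing the crossing of $\gamma$ with $\tau_i$ into the two possible resolutions in the surrounding quadrilateral. The claim is
\[
X_\gamma^T \cdot x_{\tau_i} \;=\; X_{\gamma_a}^T\, X_{\gamma_c}^T \;+\; X_{\gamma_b}^T\, X_{\gamma_d}^T,
\]
where $\gamma_a\gamma_c$ and $\gamma_b\gamma_d$ are the pairs obtained by the two smoothings. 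This would be established by a weight- and sign-preserving bijection between perfect matchings of $G_{T,\gamma}$ (with the tile corresponding to $\tau_i$ singled out) and the disjoint union of perfect matchings of the snake graphs $G_{T,\gamma_a} \sqcup G_{T,\gamma_c}$ and $G_{T,\gamma_b} \sqcup G_{T,\gamma_d}$. The bijection is built by cutting each matching at the distinguished crossing tile and reattaching, using the two possible local configurations of matched edges at that tile; the crossing monomials transform correctly because $\cross(T,\gamma_a)\cross(T,\gamma_c) = \cross(T,\gamma) \cdot x_{\tau_i} / x_{\tau_i}^2$ up to the correct factor, so the normalization matches the cluster exchange relation.

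Finally I would assemble the induction. By Theorem~\ref{thm:FST}, tagged triangulations are connected by flips, and the mutation of cluster variables is exactly the Ptolemy exchange relation $x_\gamma x_{\gamma'} = x_{\alpha}x_{\delta} + x_{\beta}x_{\varepsilon}$ on the quadrilateral determined by flipping $\gamma$. Inducting on (say) the number of crossings of $\gamma$ with $T$, the combinatorial identity of the previous paragraph coincides with the mutation relation defining $x_\gamma$ from cluster variables with fewer crossings, for which the result holds by induction. Hence $X_\gamma^T = x_\gamma$ for every arc $\gamma$.

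The main obstacle is the combinatorial step: producing the explicit matching bijection that realizes the skein/Ptolemy identity. The local analysis depends on which of several cases the crossing tile sits in (interior of the snake graph versus end tiles, and the two possible orientations of the crossed tile), and the bookkeeping of boundary/internal edge weights across the two smoothings must be carried out carefully so that the identity holds on the nose rather than only up to monomial factors. Once that bijection is in hand, the rest of the argument is a clean induction using flip-connectedness.
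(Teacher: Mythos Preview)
The paper does not give its own proof of this statement: it is quoted verbatim as \cite[Thm~4.10]{MSW11} and used as background. So there is no in-paper argument to compare your sketch against.

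That said, your proposal has a genuine gap in the inductive step. You establish (or at least plan to establish) the combinatorial skein identity
\[
X_\gamma^T \cdot x_{\tau_i} \;=\; X_{\gamma_a}^T X_{\gamma_c}^T + X_{\gamma_b}^T X_{\gamma_d}^T
\]
for the snake-graph invariant, and then want to match this against ``the mutation relation defining $x_\gamma$ from cluster variables with fewer crossings.'' But the mutation relation from Theorem~\ref{thm:FST} is a \emph{single-flip} Ptolemy identity: it expresses $x_\gamma x_{\gamma'}$ as a sum of two products only when $\gamma$ and $\gamma'$ are the two diagonals of a quadrilateral in some triangulation $T''$ containing $\gamma'$. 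It does \emph{not} directly say that for an arbitrary arc $\gamma$ crossing $\tau_i\in T$ many arcs deep, the cluster variable $x_\gamma$ satisfies $x_\gamma x_{\tau_i} = x_{\gamma_a}x_{\gamma_c}+x_{\gamma_b}x_{\gamma_d}$ with the smoothed arcs $\gamma_a,\dots,\gamma_d$. That general skein relation for the $x$'s is itself a theorem (proved in \cite{MW13} \emph{using} the snake graph formula, or independently via lambda lengths in \cite{FT12}); invoking it without an independent source makes the argument circular.

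Two clean repairs: (i) cite the lambda-length/hyperbolic-geometry proof of the skein relation for the $x_\gamma$ (Fomin--Thurston), which is independent of snake graphs, and then your induction on crossing number goes through; or (ii) replace the ``smooth a crossing of $\gamma$ with $\tau_i$'' step by the step actually used in \cite{MSW11}: show that $X_\gamma^T$ is \emph{invariant under flips of $T$}, i.e.\ that $X_\gamma^{T}$ and $X_\gamma^{\mu_i T}$ agree after substituting the exchange relation for $x_{\tau_i}$. Then flip $T$ to a triangulation $T'$ containing $\gamma$, where $X_\gamma^{T'}=x_\gamma$ trivially. Option (ii) is what the cited source does, and the ``main obstacle'' you identify (the matching bijection at a distinguished tile) reappears there in essentially the same form.
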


\begin{defn}[good matching of a band graph]
Let $\zeta$ be a closed loop. A perfect matching $P$ of the band graph $\tilde{G}_{T,\zeta}$ is called a \emph{good matching} if there exits at least one tile of $\tilde{G}_{T,\zeta}$ with two of its four edges as part of $P$. For a precise definition, see~\cite[Def. 3.18]{MW13}.
\end{defn}

We can now define a Laurent polynomial $X_\zeta$ for every closed loop $\zeta$.

\begin{defn}[Laurent polynomial from a closed loop]
\label{defn:bracelet_laurent_polynomial}
Let $(S,M)$ be a surface, $T$ an ideal triangulation, and $\mathcal{A}$ the cluster algebra associated to $B_T$.
Let $\zeta$ be a closed loop.
We define a Laurent polynomial $X_\zeta^T$ which lies in (the fraction field) of $\mathcal{A}$.
\begin{enumerate}
\item If $\zeta$ is a contractible loop, then let $X_\zeta^T := -2$.
\item \label{defn:bracelet_laurent_polynomial:item:brac1_equals_2}
 If $\zeta$ is a closed loop without self-crossings enclosing a single puncture $\puncture$,
 then $X_\zeta^T:=2$.
\item If $\zeta$ has a contractible kink, let $\overline{\zeta}$ denote the corresponding closed loop with this kink removed, and define $X_\zeta^T := (-1)X_{\overline{\zeta}}$.
\item Otherwise, let $$X_\zeta^T := \frac{1}{\cross(T,\gamma)} \sum_P x(P) 
,$$ 
where the sum is over all good matchings $P$ of the band graph $\tilde{G}_{T,\zeta}$.
\end{enumerate}
See~\cite[Def. 3.21]{MW13}.
\end{defn}

In our study of infinite friezes, we only consider marked surfaces $(S,M)$ which have nonempty boundary. In this situation, the Laurent polynomials given in Definitions \ref{defn:generalized_arc_laurent_polynomial} and \ref{defn:bracelet_laurent_polynomial} in fact lie in $\mathcal{A}$, due to~\cite[Proposition 4.5]{MSW13}, \cite[Theorem 4.1]{Mul13} and \cite[Theorem 5]{CLS15}.

\begin{figure}[h]
\begin{tikzpicture}[scale=1.4]
\node (a) at (0,0) [fill,violet,circle,inner sep=1pt] {};
\draw (0,0) circle (1.25cm);
\foreach \x in {-144,-72,0,72,144} {
   \begin{scope}[rotate=\x]
    \node (\x) at (0,-1.25) [fill,circle,inner sep=1pt] {};
   \end{scope}
}
\draw (-72) node [left] {\tiny$v_2$};
\draw (0) node [below] {\tiny$v_3$};
\draw (72) node [right] {\tiny$v_4$};
\draw (144) node [above right] {\tiny{$v_5$}};
\draw (-144) node [above left] {\tiny$v_1$};

\draw (15:1.25cm) node [right] {$b_{45}$};
\draw (90:1.25cm) node [above] {$b_{51}$};
\draw (160:1.25cm) node [left] {$b_{12}$};
\draw (230:1.25cm) node [left] {$b_{23}$};
\draw (-55:1.25cm) node [right] {$b_{34}$};

\draw[thin] (a) to (-144);

\draw[thin,
out=-140,in=-80] (0.2,-0.2) to (-144);
\draw[thin,
out=-30,in=40] (-144) to  (0.2,-0.2);

\draw[thin,out=110,in=0] (72) to  (-144);
\draw[thin,out=-110,in=130] (-144) to (0);
\draw[thin,out=-170,in=-45] (72) to (-0.3,-0.3) ;
\draw[thin,out=135,in=-90] (-0.3,-0.3) to(-144);

\draw[red,rounded corners=6pt,densely dotted,->,very thick] (144) to (-0.2,0.1) to (0,-0.2) to (0.2,0.2) to (-0.4,0.3) to (-0.1,-0.3) to (72);

\fill(-.3,0.65) node[] {{$1$}}; 
\fill(-.7,-.6) node[] {{$2$}}; 
\fill(.2,-.7) node[] {{$3$}}; 
\fill(.5,0) node[] {{$4$}}; 
\fill(.85,.5) node[] {{$0$}}; 

\fill(3,0) node[] {$\rightsquigarrow$};
\end{tikzpicture}

\quad

		\begin{tikzpicture}[scale = .4]
		\tikzstyle{every node} = [font = \small]
		\foreach \x in {0,10,20}
		{
			\foreach \y in {-8}
			{
				\draw[] (\x-7,\y-3) -- (\x+6,\y-3);
			
				\foreach \t in {-6,-4,-2,0,2,4}
				{
					\fill (\x+\t,\y-3) circle (.1);
				}

				\fill (\x-6,\y-3) node [below] {\tiny$v_1$};
				\fill (\x-4,\y-3) node [below] {\tiny$v_2$};
				\fill (\x-2,\y-3) node [below] {\tiny$v_3$};
				\fill (\x+0,\y-3) node [below] {\tiny$v_4$};
				\fill (\x+2,\y-3) node [below] {\tiny$v_5$};
				\fill (\x+4,\y-3) node [below] {\tiny$v_1$};
				
				\PruferShort{\x-6}{\y-3}{13}{}
				\fill (\x-5.7,\y-1) node [left] {\tiny{$\tau_1$}};
				\draw[] (\x-6,\y-3) .. controls (\x-5,\y-2) and (\x-3,\y-2) .. (\x-2,\y-3);
				\fill (\x-2.3,\y-2.9) node [above] {\tiny{$\tau_2$}};
                \draw[] (\x-6,\y-3) .. controls (\x-4.5,\y) and (\x+3,\y) .. (\x+4,\y-3);
                \fill (\x-0,\y-0.5) node [right] {\tiny{$\tau_4$}};
				\draw[] (\x-6,\y-3) .. controls (\x-4.5,\y-1.25) and (\x-1.5,\y-1.25) .. (\x-0,\y-3);
				\fill (\x-3,\y-1.8) node [above] {\tiny{$\tau_3$}};

				\draw[] (\x+0,\y-3) .. controls (\x+1,\y-2) and (\x+3,\y-2) .. (\x+4,\y-3);
				\fill (\x+2,\y-2.4) node [above] {\tiny{$\tau_0$}};	
			}
		}	
        \PruferShort{20+4}{-8-3}{1}{}
        \draw[->] (20-7,-8-3) -- (20+6,-8-3);
        \fill (20+4,-8-1) node [right,black] {\tiny{$\tau_1$}};
        
        \foreach \x in {0}
		{
			\foreach \y in {-8}
			{
				\draw[red,densely dotted,->,very thick] (\x+2,\y-3) .. controls (\x+4,\y+1) and (\x+19,\y+1) .. (\x+20,\y-3);
              \fill (\x+17,\y) node[red]{$\gamma$};

                }
         }
		\end{tikzpicture}
\caption{Top: An ideal triangulation $T$ and a generalized arc $\gamma$ of a once-punctured {\disk}. Bottom: $T$ drawn on a strip.}
\label{fig:example_generalized_arc}
\end{figure}

\begin{figure}[h]
\begin{center}
\begin{center}
\begin{tikzpicture}[scale=1.3]

\draw (0,1) rectangle (4,0);
\draw (0,0) -- (0,-1) -- (1,-1)-- (1,1);
\draw (0,1) -- (1,1);
\draw (2,1) -- (2,0);
\draw (3,1) -- (3,0);

\draw (0,-.5) node[left] {$b_{45}$};
\draw (0,.5) node[left] {$0$};
\draw (1,-.65) node[right] {$4$};
\draw (.9,.5) node[right] {$1$};

\draw (1.9,.5) node[right] {$4$};
\draw (2.9,.5) node[right] {$1$};
\draw (4,.5) node[right] {$3$};

\draw (0.5,1) node[above] {$1$};
\draw (0.5,.1) node[below] {$3$};
\draw (0.5,-1.2) node[below] {$b_{51}$};

\draw (1.5,1) node[above] {$1$};
\draw (2.5,1) node[above] {$4$};
\draw (3.5,1) node[above] {$0$};

\draw(1.5,0) node[below] {$4$};
\draw(2.5,0) node[below] {$1$};
\draw(3.5,0) node[below] {$1$};

\draw[red] (0.5,-.55) node[]{$0$};
\draw[red] (0.5,.5) node[]{$4$};
\draw[red] (1.55,.5) node[]{$1$};
\draw[red] (2.55,.5) node[]{$1$};
\draw[red] (3.55,.5) node[]{$4$};

\end{tikzpicture}
\end{center}
\caption{The snake graph corresponding to the generalized arc $\gamma$ of Fig.~\ref{fig:example_generalized_arc}.}
\label{fig:generalized_arc_snakegraph}
\end{center}
\end{figure}

\begin{figure}
\begin{center}
\includegraphics[width=15cm]{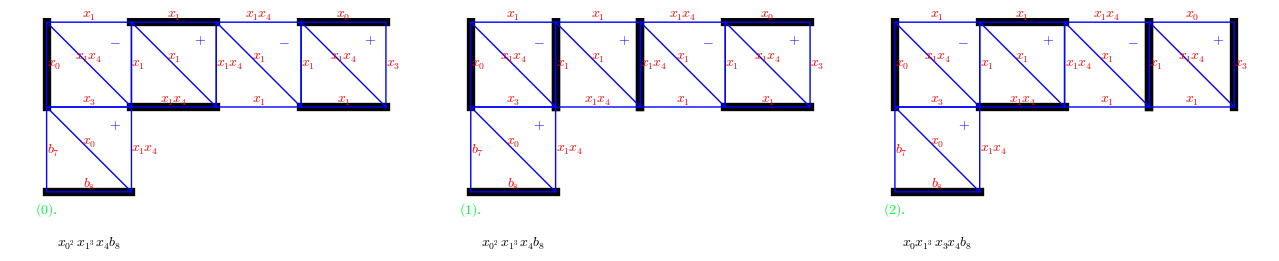}

\includegraphics[width=15cm]{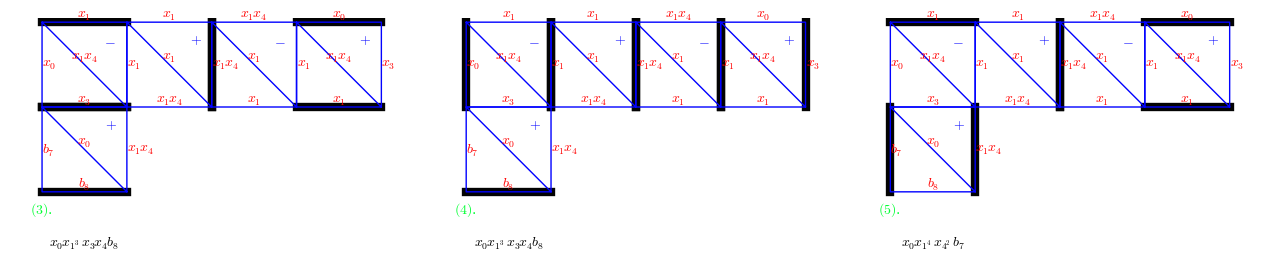}

\includegraphics[width=15cm]{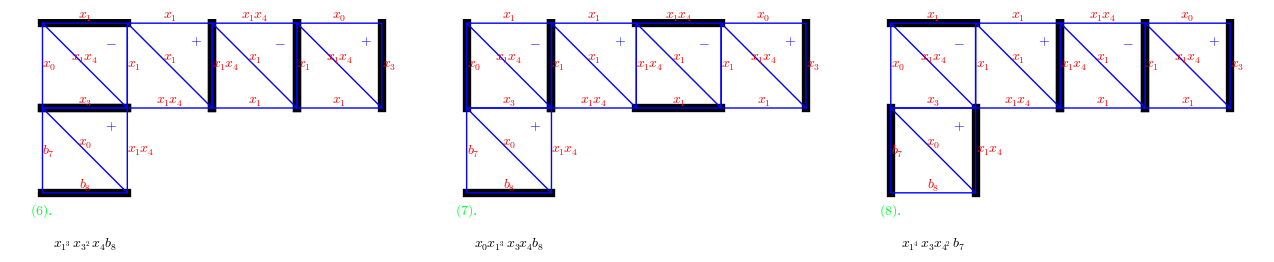}

\includegraphics[width=10cm]{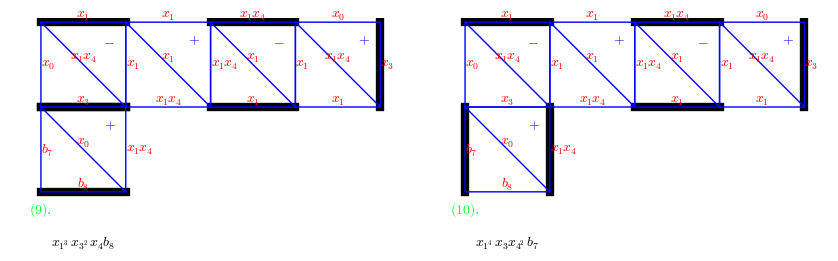}
\caption{The $11$ perfect matchings of the snake graph from Fig.~\ref{fig:generalized_arc_snakegraph}, created using the help of {\sc SageMath}~\cite{sage, combinat}.}
\label{fig:generalized_arc_snakegraph_11_pm}
\end{center}
\end{figure}

\begin{ex}[example of a Laurent expansion corresponding to a generalized arc]
Consider the ideal triangulation $T$ of a once-punctured {\disk} and a generalized arc in
Fig.~\ref{fig:example_generalized_arc}.
We obtain the snake graph ${G}_{T,\gamma}$ in Fig.~\ref{fig:generalized_arc_snakegraph}. 
Following Definition \ref{defn:generalized_arc_laurent_polynomial}, we compute
\[
X_{\gamma}^T = 
\frac{x_{0} x_{1} x_{4} + 
2 x_{1} x_{3} x_{4} + 
2 x_{0}^{2} + 
4 x_{0} x_{3} + 
2 x_{3}^{2}}
{x_{0} x_{1} x_{4}}
\]
by specializing $x(\tau)=1$ for each boundary edge $\tau$.
In particular, the snake graph ${G}_{T,\gamma}$ has $11$ perfect matchings (see Fig.~\ref{fig:generalized_arc_snakegraph_11_pm}).

\end{ex}

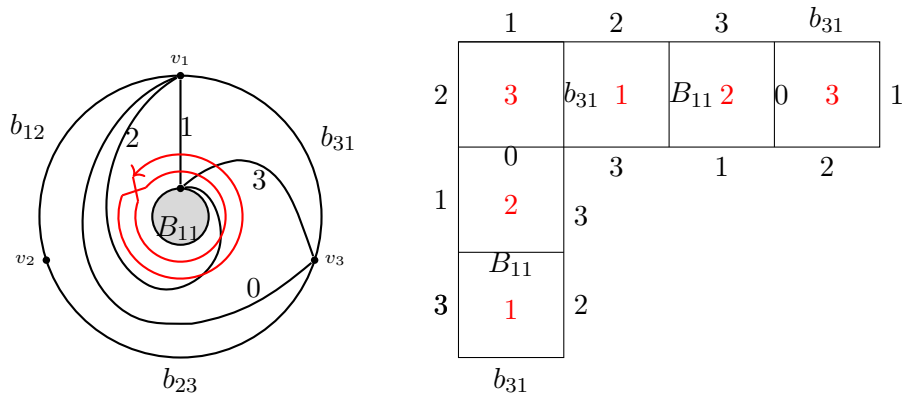
\begin{figure}[h]
\subfigure{
\begin{tikzpicture}[scale=1.5]
\node (a) at (0,0) [fill,circle,inner sep=1pt] {};
\draw[thick] (0,0) circle (1.25cm);
\foreach \x in {-72,180,72} {
   \begin{scope}[rotate=\x]
    \node (\x) at (0,-1.25) [fill,circle,inner sep=1pt] {};
   \end{scope}
}

\filldraw[thick, fill=gray!30](0,0) circle (.25cm);
\node (-180) at (0,.25) [fill,circle,inner sep=1pt] {};

\draw (72) node [right] {\tiny $v_3$};
\draw (180) node [above] {\tiny $v_1$};
\draw (-72) node [left] {\tiny $v_2$};

\draw[smooth, thick] (180) to (-180);
\draw [smooth, thick] (180) to [out=210,in=150] (-.3,-.6) to [out= 330 , in= 260] (.35, -.25) to [out=80,in=10] (-180); 
\draw [smooth, thick] (180) to [out=200,in=140] (-.55,-.8) to [out= 320 , in= 180] (0.1, -.95) to [out=10,in=220] (72); 
\draw [smooth, thick] (-180) to [out=40,in=190] (.5,.5) to [out= 0 , in= 110] (72); 

\draw (-0.02,-0.1) node[]{{$B_{11}$}};

\draw (.7,.5) node[below]{{$3$}};
\draw (.65,-.8) node[above]{{$0$}};
\draw (-0.1,.8) node[right]{{$1$}};
\draw (-.58,.7) node[right]{{$2$}};

\draw (-1.1,.8) node[left]{{$b_{12}$}};
\draw (0,-1.25) node[below]{{$b_{23}$}};
\draw (1.15,.7) node[right]{{$b_{31}$}};

\draw[thick,red] ([shift=(140:.4cm)]0,0) arc (140:-200:.4cm);
\draw[thick,red,<-] ([shift=(140:.55cm)]0,0) arc (140:-200:.55cm);
\draw[thick,red]  ([shift = (-200:.55cm)]0,0) -- ([shift=(140:.4cm)]0,0);
\draw[thick,red] ([shift=(140:.55cm)]0,0) -- ([shift = (-200:.4cm)]0,0);

\end{tikzpicture}
}
\quad
\subfigure{
\begin{tikzpicture}[scale=1.4]

\draw (0,1) rectangle (4,0);
\draw (0,0) -- (0,-2) -- (1,-2)-- (1,1);
\draw (0,-1) -- (1,-1);
\draw (0,1) -- (1,1);
\draw (2,1) -- (2,0);
\draw (3,1) -- (3,0);

\draw (0,-1.5) node[left] {$3$};
\draw (0,-.5) node[left] {$1$};
\draw (0,.5) node[left] {$2$};
\draw (1,-1.5) node[right] {$2$};
\draw (1,-.65) node[right] {$3$};
\draw (0,-1.5) node[left] {$3$};
\draw (.9,.5) node[right] {$b_{31}$};

\draw (1.9,.5) node[right] {$B_{11}$};
\draw (2.9,.5) node[right] {$0$};
\draw (4,.5) node[right] {$1$};

\draw (0.5,1) node[above] {$1$};
\draw (0.5,.1) node[below] {$0$};
\draw (0.5,-.9) node[below] {$B_{11}$};
\draw (0.5,-2) node[below] {$b_{31}$};

\draw (1.5,1) node[above] {$2$};
\draw (2.5,1) node[above] {$3$};
\draw (3.5,1) node[above] {$b_{31}$};

\draw(1.5,0) node[below] {$3$};
\draw(2.5,0) node[below] {$1$};
\draw(3.5,0) node[below] {$2$};

\draw[red] (0.5,-1.55) node[]{$1$};
\draw[red] (0.5,-.55) node[]{$2$};
\draw[red] (0.5,.5) node[]{$3$};
\draw[red] (1.55,.5) node[]{$1$};
\draw[red] (2.55,.5) node[]{$2$};
\draw[red] (3.55,.5) node[]{$3$};

\end{tikzpicture}
}
\caption{Left: An ideal triangulation $T$ and the bracelet $\Brac_2$. Right: Corresponding band graph $\widetilde{G}_{T,\gamma}$.}\label{fig:band_laurent_example}
\end{figure}

\begin{figure}
\begin{center}
\includegraphics[width=15cm]{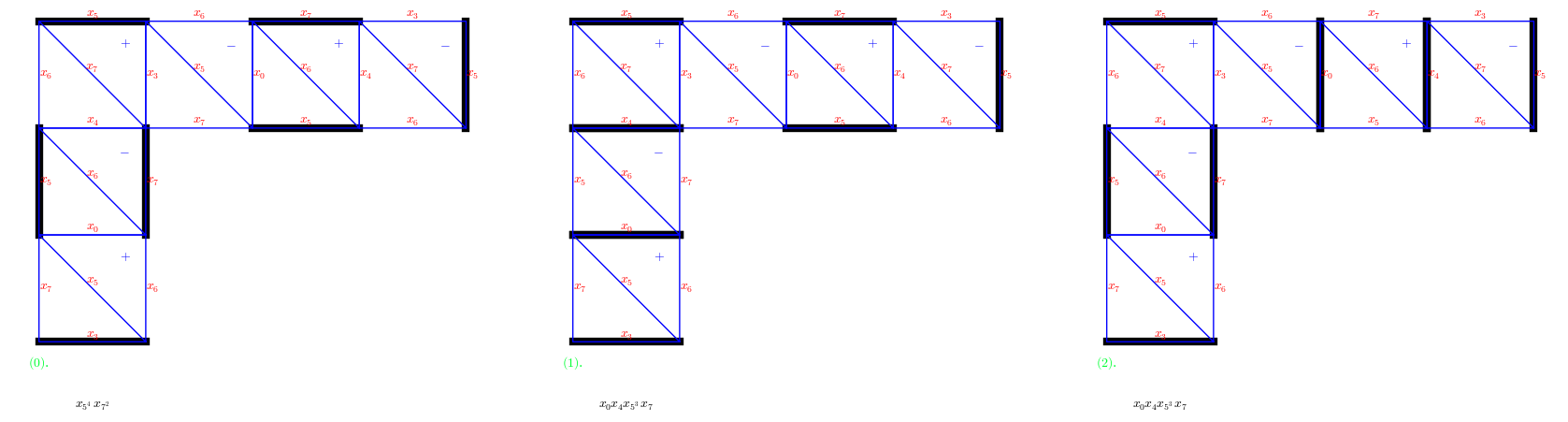}

\includegraphics[width=15cm]{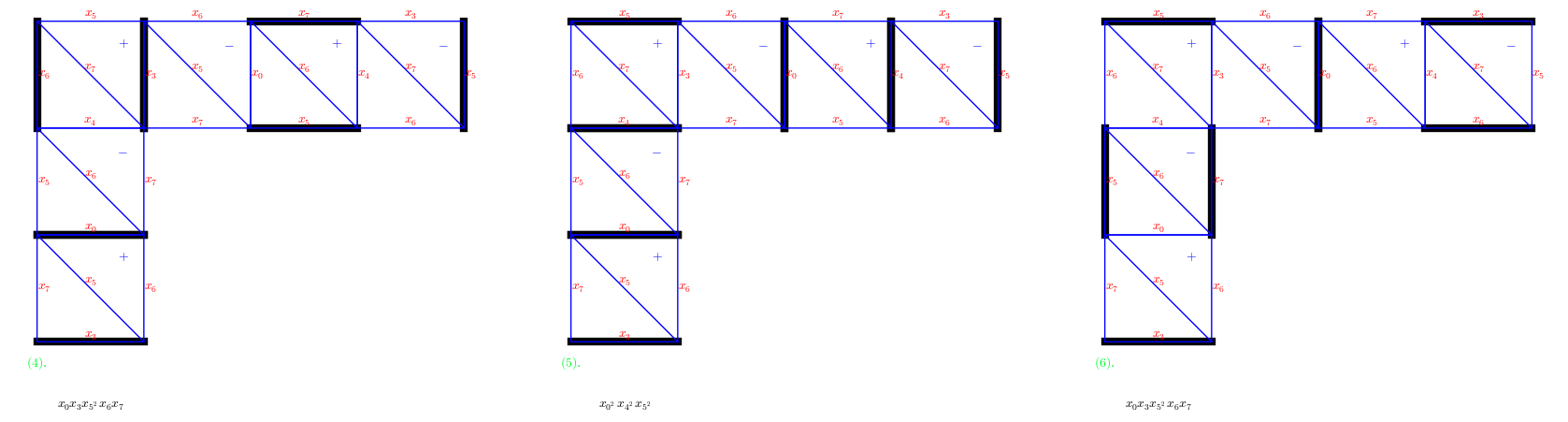}

\includegraphics[width=15cm]{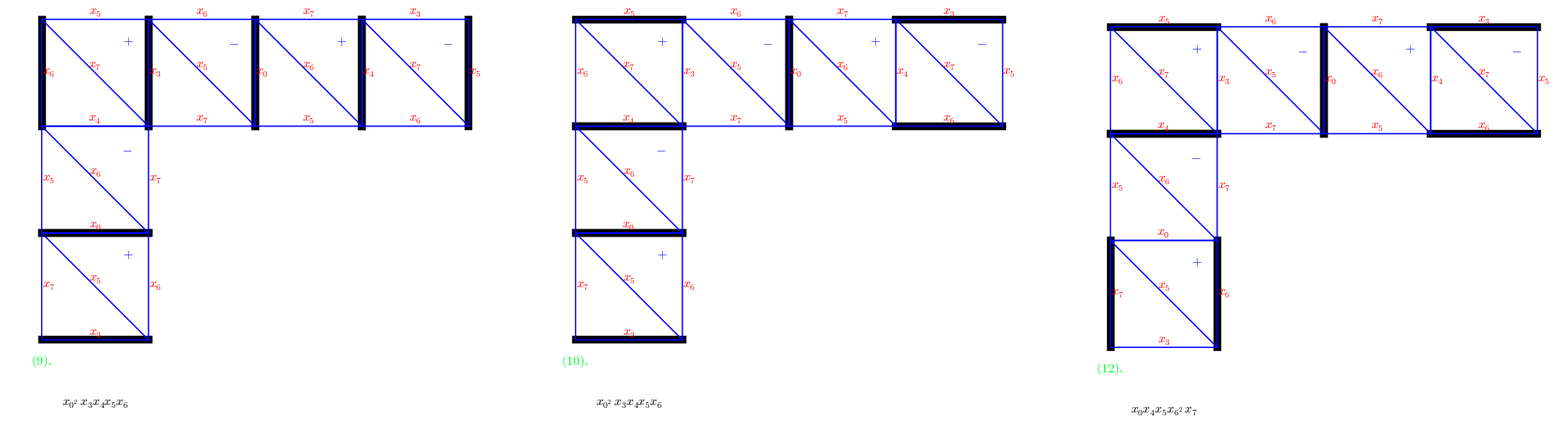}

\includegraphics[width=15cm]{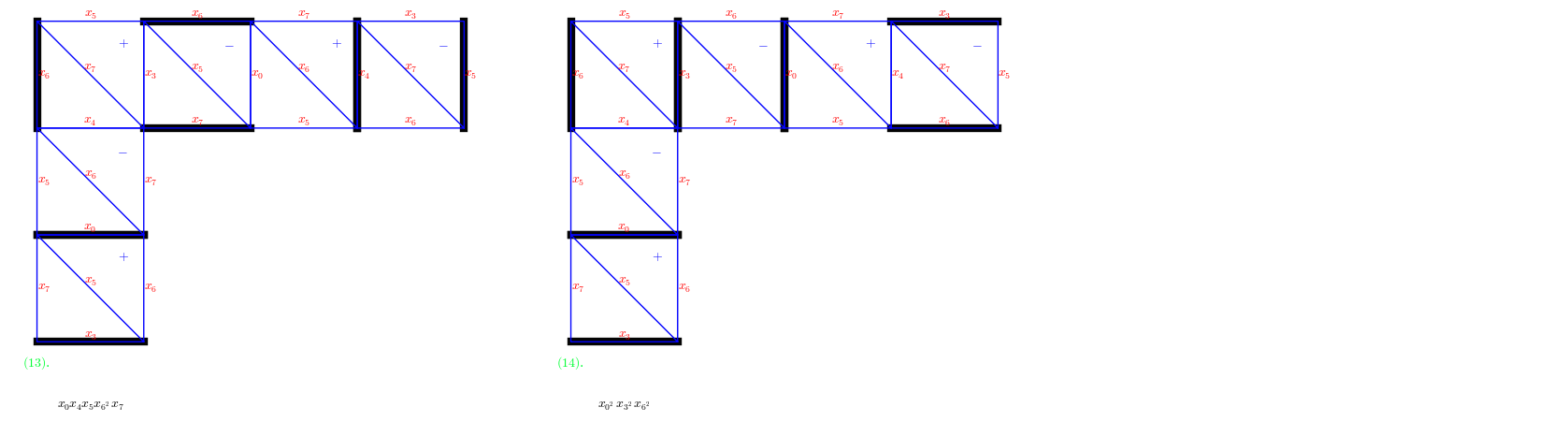}

\includegraphics[width=15cm]{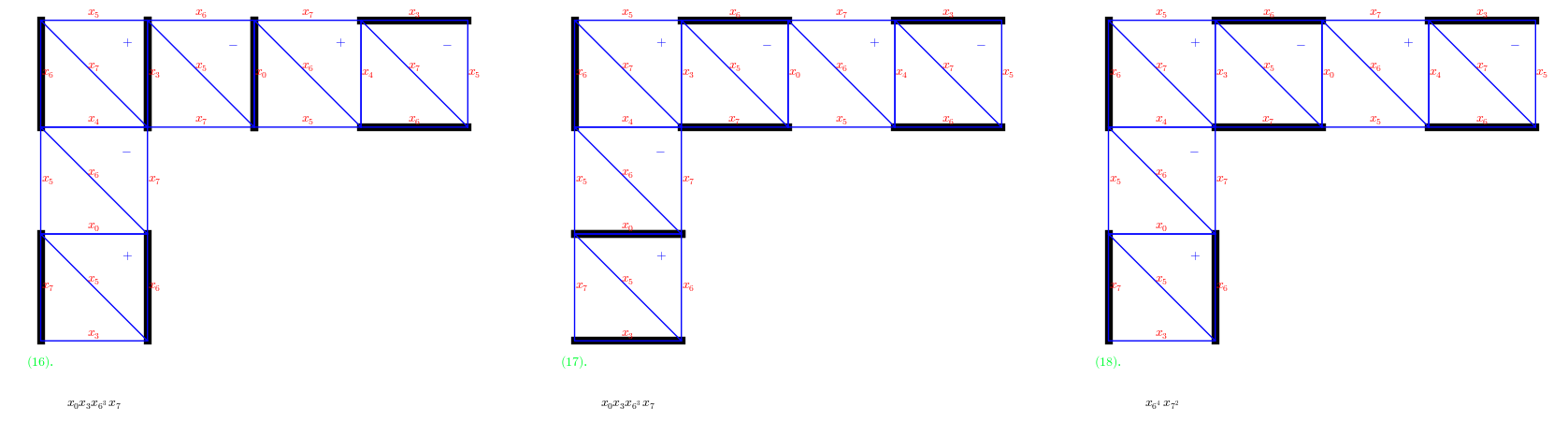}
\caption{The $14$ good matchings of the band graph from Fig.~\ref{fig:band_laurent_example}, created using the help of {\sc SageMath}~\cite{sage, combinat}.}
\label{fig:band_laurent_example_14pm}
\end{center}
\end{figure}

\begin{ex}[Example of the Laurent polynomial corresponding to $\Brac_2$ in an annulus]
Consider the ideal triangulation $T$ of an annulus and the $\Brac_2$ in
Fig.~\ref{fig:band_laurent_example} (left).
We obtain the graph $\widetilde{G}_{T,\Brac_2}$ in Fig.~\ref{fig:band_laurent_example} (right). 
Following Definition
\ref{defn:bracelet_laurent_polynomial}, we compute
\[
X_{\Brac_2}^T 
\hspace{-1mm}
=
\hspace{-1mm}
\frac{x_{1}^{4} x_{3}^{2} 
\hspace{-.5mm}+ 
x_{2}^{4} x_{3}^{2} 
\hspace{-.5mm}+ 
2 x_{0} x_{1}^{3} x_{3} 
\hspace{-.5mm}+ 
2 x_{0} x_{1} x_{2}^{2} x_{3} 
\hspace{-.5mm}+ 
x_{0}^{2} x_{1}^{2} 
\hspace{-.5mm}+ 
2 x_{1}^{2} x_{2} x_{3} 
\hspace{-.5mm}+ 
2 x_{2}^{3} x_{3} 
\hspace{-.5mm}+ 
2 x_{0} x_{1} x_{2} 
\hspace{-.5mm}+ 
x_{2}^{2}}
{x_{1}^{2} x_{2}^{2} x_{3}^{2}}
\]
by specializing $x(\tau)=1$ for each boundary edge $\tau$.
In particular, the band graph $\widetilde{G}_{T,\Brac_2}$ has $14$ good matchings (see Fig.~\ref{fig:band_laurent_example_14pm}).
\end{ex}

For the rest of the paper, we will use the notation $x_\gamma$ or $x(\gamma)$ to denote the cluster algebra element corresponding to $\ga$, where $\gamma$ is a generalized arc or loop.

\section{Infinite friezes}
\label{sec:background_infinite_frieze}
 
Tschabold in~\cite{Tsc15} showed that triangulations of once-punctured {\disks} give rise to certain periodic (positive integral) infinite friezes, and that infinite friezes arising in this way satisfy a certain arithmetic property (see Section~\ref{subsec:arithmetic_progressions}). In~\cite{BPT16}, Baur, Parsons, and Tschabold went further and gave a complete characterization of infinite frieze patterns of positive integers via triangulations of quotients of an infinite strip in the plane.  In this classification, periodic frieze patterns arise from triangulations of the annulus (which can be thought of as a quotient of the infinite strip by translation) or of the once-punctured disk.  
We refer the reader to Lemma 3.6 of \cite{BPT16} for a description on how to draw a triangulation of a once-punctured {\disk} as an asymptotic triangulation in the infinite strip.  See an example in Fig.~\ref{fig:disc_as_AT}.

\begin{defn} 
\label{defn:infinite_frieze}
An \emph{infinite frieze} $\F$ of positive integers is an array $\{m_{ij}\}_{i,j \in \mathbb{Z}, j \geq i}$ with infinitely many rows,  
drawn as in Fig.~\ref{fig:defn_integer_infinite_frieze2}, such that $m_{i,i} = 0$, $m_{i,i+1} = 1$, $m_{ij} \in \mathbb{Z}_{>0}$ for all $i \leq j$, 
where, for every diamond in $\F$ indexed by
$$\begin{array}{ccccccc}
 &({i+1,j})&\\
 ({i,j})&&({i+1,j+1})\\
 &({i,j+1})&
\end{array}$$
the relation $m_{i,j} \, m_{i+1,j+1} - m_{i+1,j} \, m_{i,j+1} = 1$ is satisfied.
We say that the row of all $0$s is the zeroth row.
\end{defn}

\begin{center}
\begin{figure}
\begin{tikzpicture}[font=\normalsize] 
\matrix(m) [matrix of math nodes,row sep={1.75em,between origins},column sep={1.75em,between origins},nodes in empty cells]{
&0&&0&&0&&0&&0&&&&&&\\ 
\node{\cdots};&&1&&1&&1&&1&&1&&\node{\cdots};&&\\[-0.25em]
&&&(-1,1)&&(0,2)&&(1,3)&&(2,4)&&(3,5)&&&\\
&&\node{\cdots};&&(-1,2)&&(0,3)&&(1,4)&&(2,5)&&(3,6)&&\node{\cdots};\\
&&&&&(-1,3)&&(0,4)&&(1,5)&&(2,6)&&(3,7)&\\
&&&&&&&&\node[rotate=-6.5,shift={(-0.034cm,-0.08cm)}] {\ddots};&&&&\node[rotate=-6.5,shift={(-0.034cm,-0.08cm)}]  {\ddots};&&\\
};
\end{tikzpicture}
\caption{Indexing for an infinite frieze $\F$.}
\label{fig:defn_integer_infinite_frieze2}
\end{figure}
\end{center}

We often omit the row of 0s when writing a frieze pattern, as they do not provide any additional information. The first non-trivial row of a frieze (that is, the second row) is called a \emph{quiddity row}. If $\F$ is periodic with period $n$, then we call the $n$-tuple $(a_1,\ldots, a_n)$ a \emph{quiddity sequence}. 

Just as finite friezes of type $A_{n}$ correspond to triangulations of a polygon $P_{n+3}$, triangulations of the once-punctured {\disk} $D_n$ give rise to infinite friezes of positive integers via matching numbers. Given a triangulation $T$ of a once-punctured {\disk} $D_n$, let the \emph{quiddity sequence of $T$} be $q_T = (a_1, \ldots, a_n)$ where $a_i$ is the number of ideal triangles incident to a vertex $i$, such that, if $i$ is adjacent to a self-folded triangle, both the self-folded triangle and the triangle with an $\ell$-loop as one of its three sides, are counted twice. See Fig.  \ref{fig:counting_triangles_punctured_disc} for an example of computing these matching numbers.

\begin{figure}[h!]
\subfigure{
\begin{tikzpicture}
\node (a) at (0,0) [fill,circle,inner sep=1pt] {};
\draw (0,0) circle (1.25cm);
\foreach \x in {-144,-72,0,72,144} {
   \begin{scope}[rotate=\x]
    \node (\x) at (0,-1.25) [fill,circle,inner sep=1pt] {};
   \end{scope}
}
\draw (a) node [above] {$0$};
\draw (-72) node [left] {$2$};
\draw (0) node [below] {$3$};
\draw (72) node [right] {$4$};
\draw (144) node [above right] {$5$};
\draw (-144) node [above left] {$1$};

\draw[thin,opacity=0.5] (a) to (-144);
\draw[thin,opacity=0.5] (a) to (72);
\draw[thin,opacity=0.5,out=110,in=0] (72) to  (-144);
\draw[thin,opacity=0.5,out=-110,in=130] (-144) to (0);
\draw[thin,opacity=0.5,out=-170,in=-45] (72) to (-0.3,-0.3) ;
\draw[thin,opacity=0.5,out=135,in=-90] (-0.3,-0.3) to(-144);

\foreach \x in {-144,-72,0,72,144} {
   \begin{scope}[rotate=\x]

\draw[thin,myred,out=45,in=135] (-0.428,-1.175) to (0.428,-1.175);
   \end{scope}
}

\draw (144) node [myred,shift={(-0.5cm,-.0cm)}] {$1$};
\draw (72) node [myred,shift={(-0.35cm,0.3cm)}] {$4$};
\draw (0) node [myred,shift={(0.4cm,0.35cm)}] {$2$};
\draw (-72) node [myred,shift={(0.4cm,-0.3cm)}] {$1$};
\draw (-144) node [myred,shift={(-0.25cm,-0.55cm)}] {$5$};
\end{tikzpicture}
}
\quad
\subfigure{
\begin{tikzpicture}
\node (a) at (0,0) [fill,circle,inner sep=1pt] {};
\draw (0,0) circle (1.25cm);
\foreach \x in {-144,-72,0,72,144} {
   \begin{scope}[rotate=\x]
    \node (\x) at (0,-1.25) [fill,circle,inner sep=1pt] {};
   \end{scope}
}
\draw (a) node [above] {$0$};
\draw (144) node [above right] {$5$};
\draw (72) node [right] {$4$};
\draw (0) node [below] {$3$};
\draw (-72) node [left] {$2$};
\draw (-144) node [above left] {$1$};
\draw
(a) to (-144);
\draw[
out=-140,in=-80] (0.2,-0.2) to (-144);
\draw[
out=-30,in=40] (-144) to  (0.2,-0.2);
\draw[
out=110,in=0] (72) to  (-144);
\draw[
out=-30,in=-150] (-72) to (72);
\draw[
out=-170,in=-45] (72) to (-0.3,-0.3) ;
\draw[
out=135,in=-90] (-0.3,-0.3) to(-144);

\foreach \x in {-144,-72,0,72,144} {
   \begin{scope}[rotate=\x]

\draw[thin,myred,out=45,in=135] (-0.428,-1.175) to (0.428,-1.175);
   \end{scope}
}

\draw (144) node [myred,shift={(-0.5cm,0cm)}] {$1$};
\draw (72) node [myred,shift={(-0.35cm,0.25cm)}] {$4$};
\draw (0) node [myred,shift={(0.4cm,0.35cm)}] {$1$};
\draw (-72) node [myred,shift={(0.4cm,0.2cm)}] {$2$};
\draw (-144) node [myred,shift={(-0.2cm,-0.5cm)}] {$6$};
\end{tikzpicture}
}
\caption{Counting triangles adjacent to boundary points in a triangulation of a once-punctured {\disk}.
Left: Triangulation without a self-folded triangle.
Right: Triangulation with a self-folded triangle.
Note that each of the two triangles with an $\ell$-loop as a side is counted twice for vertex $1$.}
\label{fig:counting_triangles_punctured_disc}
\end{figure}
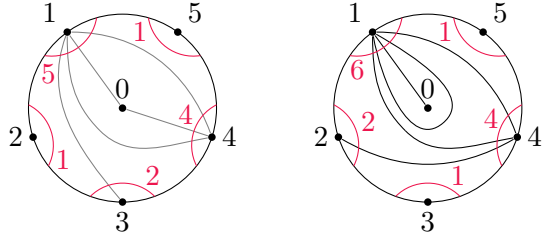

Now that we have a way to read off the quiddity sequence from a triangulation, we can construct an infinite frieze pattern. The frieze pattern coming from the quiddity sequence of Fig.~\ref{fig:counting_triangles_punctured_disc} (left) is given in Fig.~\ref{fig:frieze_from_D5}. We write the $(k\,n)$-th rows (for $k \in \mathbb{N}$) of the frieze in bold characters.

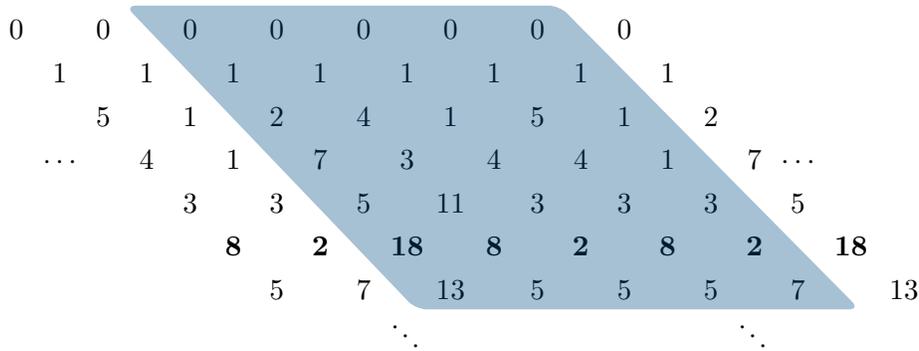
\begin{figure}[h!]
\begin{tikzpicture}
 \matrix(m) [matrix of math nodes,row sep={1.5em,between origins},column sep={1.5em,between origins},nodes in empty cells]{
&&&&&&&&&&&&&&&&&&\\
0&&0&&0&&0&&0&&0&&0&&0&&\\
&1&&1&&1&&1&&1&&1&&1&&1&\\
&&5&&1&&2&&4&&1&&5&&1&&2&\\
&\node{\cdots};&&4&&1&&7&&3&&4&&4&&1&&7&\node{\cdots};&\\
&&&&3&&3&&5&&11&&3&&3&&3&&5&\\
&&&&&\mathbf{8}&&\mathbf{2}&&\mathbf{18}&&\mathbf{8}&&\mathbf{2}&&\mathbf{8}&&\mathbf{2}&&\mathbf{18}&\\
&&&&&&5&&7&&13&&5&&5&&5&&7&&13&\\
&&&&&&&&&\node[rotate=-6.5,shift={(-0.034cm,-0.08cm)}]  {\ddots};&&&&&&&&\node[rotate=-6.5,shift={(-0.034cm,-0.08cm)}]  {\ddots};&\\
};

\draw[opacity=0,rounded corners, fill=myblue,fill opacity=0.35] 
($(m-1-4.south west)+(-0.15cm,0cm)$) -- 
($(m-8-10.south west)+(0.25cm,0cm)$)--($(m-8-20.south west)+(0.25cm,0cm)$) -- ($(m-1-14.south west)+(-0.15cm,0cm)$) -- cycle;

\end{tikzpicture}
\caption{Infinite frieze with quiddity sequence $(2,4,1,5,1)$, with shaded fundamental region.}
\label{fig:frieze_from_D5}
\end{figure}

\begin{thm}[{\cite[Thm 3.6]{Tsc15}}] \label{thm:Tsc15_thm36}
Let $T$ be a triangulation of a once-punctured {\disk} $D_n$. 
Then the quiddity sequence $q_T = (a_1, \ldots, a_n)$ of $T$ is a quiddity sequence of an infinite frieze $\F_T$ of period $n$.
\end{thm}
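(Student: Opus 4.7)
\medskip
\noindent\textbf{Proof proposal.} The plan is to realize each entry $m_{i,j}$ as a combinatorial invariant attached to a generalized peripheral arc $\gamma(i,j)$ in the once-punctured disk, and then derive the three defining properties of a frieze (positivity/integrality, the diamond relation, and $n$-periodicity) from this geometric description.

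First, I would fix the triangulation $T$ of $D_n$ and the universal-cover picture of Figure~\ref{fig:disc_as_AT}, so that boundary vertices are labelled by $\mathbb Z$ with $v_{i+n} = v_i$ and the asymptotic triangulation lifts $n$-periodically. For $j \geq i+1$, define $m_{i,j}$ to be the number of BCI tuples associated to the generalized peripheral arc $\gamma(i,j)$ (equivalently, the number of perfect matchings of the snake graph $G_{T,\gamma(i,j)}$, by~\cite{MSW11,Pro05}). Set $m_{i,i}=0$ and $m_{i,i+1}=1$ by convention. A direct inspection of the two types of boundary vertices (those adjacent to a self-folded triangle, with the radius counted correctly, and the generic case) shows $m_{i-1,i+1}=a_i$, so the quiddity row is exactly $q_T$.

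Positivity and integrality are then automatic: for $j \geq i+1$ the snake graph $G_{T,\gamma(i,j)}$ is nonempty and has at least one perfect matching, so $m_{i,j} \in \mathbb Z_{>0}$. For the diamond relation, the key ingredient is the Ptolemy/skein identity available in this cluster algebra setting (Definition~\ref{defn:generalized_arc_laurent_polynomial} and the theorem following it). Consider the four arcs $\gamma(i,j)$, $\gamma(i+1,j+1)$, $\gamma(i,j+1)$, $\gamma(i+1,j)$; in the universal cover the first two cross transversally exactly once, and the two smoothings of that crossing produce respectively the pair $\gamma(i,j+1)\cup\gamma(i+1,j)$ and the pair $\gamma(i,i+1)\cup\gamma(j,j+1)$. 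Applied in the cluster algebra from $(S,M)$ this yields
\[
x_{\gamma(i,j)}\,x_{\gamma(i+1,j+1)} \;=\; x_{\gamma(i,j+1)}\,x_{\gamma(i+1,j)} \;+\; x_{\gamma(i,i+1)}\,x_{\gamma(j,j+1)}.
\]
Each factor on the right-hand second term is a boundary-segment variable, hence equal to $1$. Specializing every cluster variable $x_{\tau_k}$ for $\tau_k \in T$ to $1$, the Laurent polynomial $X_{\gamma(i,j)}^T$ evaluates to $m_{i,j}$ (this is the content of the BCI = perfect-matchings count), and the Ptolemy identity becomes exactly the diamond relation $m_{i,j}\,m_{i+1,j+1} - m_{i+1,j}\,m_{i,j+1} = 1$.

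Finally, $n$-periodicity follows because the triangulation $T$, viewed in the universal cover, is invariant under the deck translation $v_i \mapsto v_{i+n}$; hence $\gamma(i+n,j+n)$ and $\gamma(i,j)$ have combinatorially identical snake graphs, so $m_{i+n,j+n} = m_{i,j}$, which is exactly $n$-periodicity of $\F_T$. I expect the main obstacle to be the justification of the Ptolemy step: one must verify that the smoothing of the crossing of $\gamma(i,j)$ and $\gamma(i+1,j+1)$ really does produce the four stated peripheral arcs (rather than arcs that cut out monogons or cross themselves), and that the specialization $x_{\tau_k}\mapsto 1$ legitimately commutes with the skein relation. Once that bookkeeping is handled --- most conveniently by drawing the two configurations in the asymptotic strip of Figure~\ref{fig:disc_as_AT} --- the frieze axioms follow cleanly.
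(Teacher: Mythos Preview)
The paper does not give its own proof of this statement; it is quoted from~\cite{Tsc15} as background. That said, your proposal is correct and is essentially the integer specialization of the paper's proof of Theorem~\ref{thm:Frieze_Laurent}. There the diamond relation for the Laurent entries $x(\gamma(i,j))$ is obtained by exactly the skein computation you describe (Figure~\ref{fig:skeinrelationsproof} and Theorem~\ref{thm:MW_skein_relations}); your additional step of specializing all $x_{\tau_k}\mapsto 1$ turns each $X_{\gamma(i,j)}^T$ into the perfect-matching/BCI count, yielding the positive-integer frieze. Periodicity via the deck translation is the natural argument.

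The only point that deserves explicit checking is the identification $m_{i-1,i+1}=a_i$ in the quiddity row when vertex $i$ is incident to a self-folded triangle. The paper's convention (stated just before Figure~\ref{fig:counting_triangles_punctured_disc}) counts both the self-folded triangle and the enclosing triangle twice, and you need to verify that the BCI/snake-graph count for the short arc $\gamma(i-1,i+1)$, computed in the polygon cover $\widetilde{S}_\gamma$ where the self-folded triangle unfolds, matches this. This is straightforward once you draw the local picture in the strip, but it is the one place where ``direct inspection'' hides a nontrivial bookkeeping step.
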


\begin{remark}
In~\cite{BM09}, Baur and Marsh provide a construction of \emph{finite} periodic frieze patterns from once-punctured disks.  The entries of their friezes are in bijection with indecomposables of a type $D_n$ quiver and have the same form as the associated Auslander-Reiten quiver (e.g. see \cite{Sch08Dn}).  The friezes of Baur-Marsh match up with the first level (see Section \ref{subsec:growth_coeff}) of the infinite friezes studied in this paper except that the two rows of theirs associated to the leaves of the fork in the $D_n$ Dynkin diagram must be multiplied together point-wise to obtain the $n$th row in the infinite frieze.  
\end{remark}

Consider a triangulation of the annulus $C_{n,m}$, where the outer boundary component has $n$ marked points, and the inner boundary component has $m$ marked points.  Each of the outer boundary and inner boundary gives a quiddity sequence, and thus an infinite frieze. Unless otherwise stated, we consider the quiddity sequence coming from the outer boundary of an annulus.

\begin{defn}[{\cite[Def. 3.8]{BPT16}}]
Let $(a_1,\ldots, a_n)$ be the quiddity sequence of a periodic frieze. We say that $(a_1,\ldots, a_n)$ can be \emph{realized in an annulus (resp. a once-punctured disk)} if there is some $m \geq 1$ and a triangulation $T$ of $C_{n,m}$ (resp. a triangulation $T$ of $D_n$) such that $(a_1,\ldots, a_n)$ is the quiddity sequence of $T$.
\end{defn}

Let $\mathcal{U}$ be the universal cover as described in Section 3.3 of \cite{BPT16}, which is an infinite strip. Then every quiddity sequence can be realized in $\mathcal{U}$, and every triangulation of $\mathcal{U}$ gives rise to an infinite frieze (Theorem 5.2, \cite{BPT16}). Every entry of these infinite friezes can still be described in terms of matching numbers. 
We can even consider non-periodic infinite friezes, which can be obtained via non-periodic triangulations of an infinite strip (without any quotienting). 

\begin{ex}
Let $T$ be the following triangulation of $\mathcal{U}$, and $\gamma(i,j)$ be the arc from $i$ to $j$ on the outer boundary $\partial$:
\begin{center}
\begin{tikzpicture}[scale = .5]
		\tikzstyle{every node} = [font = \tiny]
		\foreach \x in {6}
		{
			\foreach \y in {-8}
			{
				\draw[<-] (\x-7,\y+4) -- (\x+7,\y+4);
				\fill (\x-8,\y+4) node {\small{$\partial'$}};
				\draw[->] (\x-7,\y-0) -- (\x+7,\y-0);
				\fill (\x-8,\y-0) node {\small{$\partial$}};

				\draw[] (\x-0,\y+4) -- (\x-4,\y-0);
				\draw[] (\x+6,\y+4) -- (\x+2,\y-0);
				\draw[] (\x-4,\y+0) -- (\x+4,\y-0);
				
				\draw[] (\x-6,\y+0) -- (\x-6,\y+4);
				\draw[] (\x-6,\y+0) -- (\x-4,\y+4);
				\draw[] (\x+4,\y+0) -- (\x+6,\y+4);
				\draw[] (\x+6,\y+0) -- (\x+6,\y+4);
				
				\draw[] (\x-4,\y+4) -- (\x-4,\y-0);
				\draw[](\x-2,\y+4) -- (\x-4,\y-0);
				
				\draw[](\x+0,\y+4) -- (\x-2,\y-0);
				\draw[](\x+0,\y+4) -- (\x+2,\y-0);

				\draw[] (\x-0,\y+4) .. controls (\x+1,\y+3.25) and (\x+3,\y+3.25) .. (\x+4,\y+4);
				\draw[] (\x-0,\y+4) .. controls (\x+2,\y+2.5) and (\x+4,\y+2.5) .. (\x+6,\y+4);
				\draw[] (\x-2,\y+0) .. controls (\x-1,\y+.75) and (\x+1,\y+.75) .. (\x+2,\y+0);
				\draw[thick,dashed] (\x-4,\y+0) .. controls (\x-2,\y+2) and (\x+2,\y+2) .. (\x+4,\y+0);
			
				\foreach \t in {-6,-4,-2,0,2,4,6}
				{
					\fill (\x+\t,\y-0) circle (.1);
					\fill (\x+\t,\y+4) circle (.1);
				}
					
				\fill (\x-4,\y-0) node [below] {$i$};
				\fill (\x+4,\y-0) node [below] {$j$};
				\fill (\x+0,\y+1.5) node [above] {$\gamma$};	
				\fill (\x-6,\y+2) node [left] {$\cdots$};	
				\fill (\x+6,\y+2) node [right] {$\cdots$};	
				
			}
			
		}
\end{tikzpicture}
\end{center}
Then the arc $\gamma(i,j)$, corresponds to the $(i,j)$-th entry in the infinite frieze pattern arising from this triangulation.
\end{ex}

\section{Infinite friezes of cluster algebra elements}
\label{sec:proof_frieze_laurent}

Our first result is the construction of infinite frieze patterns consisting of certain elements of a cluster algebra. Let $T$ be an ideal triangulation of a once-punctured {\disk} or an annulus, and let $\mathcal{A} = \mathcal{A}(B_T)$ be the coefficient-free cluster algebra 
associated to the signed adjacency matrix $B_T$. 
Let $Bd$ be a boundary component with $n$ marked points, where $n\geq 2$. 

\begin{defn}[Array of Laurent polynomials from generalized peripheral arcs]
\label{defn:array_of_peripheral_arcs}
We construct an array $\mathcal{F}_{Bd}$ corresponding to the set of all generalized arcs that are peripheral on $Bd$ as follows.
The entries of $\mathcal{F}_{Bd}$ are indexed by $(i,j)$, $i \leq j \in \mathbb{Z}$ such that our labeling convention is consistent with 
Definition \ref{defn:infinite_frieze}.

Set the entry at $(i,i)$ to be $0$. Now for every entry at $(i,j)$, $i<j$, we consider the (generalized) peripheral arc $\gamma(i,j)$ defined by taking the appropriate arc attached to the bottom boundary of the infinite strip 
and projecting this down to the once-punctured {\disk} or annulus.  Here, $i, j \in \mathbb{Z}$ so the marked points on the boundary $Bd$ are labeled as $(i\mod n)$ and $(j\mod n)$.  We let the entry at $(i,j)$ of $\mathcal{F}_{Bd}$ be the Laurent polynomial $x(\gamma(i,j))$ corresponding to the generalized arc $\gamma(i,j)$ (see Definition \ref{defn:generalized_arc_laurent_polynomial}).
Note that $\gamma(i,i+1)$ is a boundary edge, so $x(\gamma(i,i+1))=1$ by definition.
\end{defn}

\begin{thm}
The array ${F}_{Bd}$ Laurent polynomials corresponding to generalized peripheral arcs on $Bd$ form an infinite frieze pattern.
\label{thm:Frieze_Laurent}
\end{thm}

Before we prove the theorem, we recall \emph{skein relations}, and the related terminology.

\begin{defn}
A \emph{multicurve} is a finite multiset of generalized arcs and closed loops such that there are only a finite number of pairwise crossings among the collection. A multicurve is said to be \emph{simple} if there are no pairwise crossings among the collection, and no self-crossings.
\end{defn}

If a multicurve is not simple, there are two ways to resolve a crossing so that we obtain a multicurve that no longer contains that crossing. This process is known as \emph{smoothing}:

\begin{defn}
\label{defn:smoothing}
Let $\gamma, \gamma_1$, and $\gamma_2$ be generalized arcs or closed loops such that we have one of the following two cases:
\begin{enumerate}
\item $\gamma_1$ crosses $\gamma_2$ at a point $c$, or
\item $\gamma$ has a self-crossing at a point $c$.
\end{enumerate}
Then we let $C$ be the multicurve $\{ \gamma_1, \gamma_2\}$ or $\{\gamma\}$ depending on which of the two cases we are in. 
We define the \emph{smoothing of $C$ at the point $c$} to be the pair of configurations $C_+$ 
and $C_-$.
The multicurve 
$C_+$ (respectively, $C_-$) is the same as $C$ except for the local change that replaces the (self-)crossing {\Large $\times$} with the pair of segments {\Large $~_\cap^{\cup}$} (resp., {\large $\supset \subset$}).
See Fig.~\ref{fig:skein}.
\end{defn}

\begin{figure}[h!]
\newcommand\MacroDashedCircle{
\draw[fill=gray!20,densely dashed] circle(\LocalRadius); 
}
\newcommand\TikzSkeinRelation[1]{
\newcommand\LocalRadius{5.5pt}
\begin{tikzpicture}[scale=#1]
\MacroDashedCircle
\draw[red,thick] (-135:\LocalRadius) -- (45:\LocalRadius);
\draw[red,thick] (135:\LocalRadius) -- (-45:\LocalRadius);
\end{tikzpicture}
\begin{tikzpicture}[scale=#1]
\node (0,0) {$=$};
\end{tikzpicture}
\begin{tikzpicture}[scale=#1]
\MacroDashedCircle
\draw[red,thick,rounded corners=3pt] (135:\LocalRadius) --  (90:0.1*\LocalRadius) -- (45:\LocalRadius);
\draw[red,thick,rounded corners=3pt] (-135:\LocalRadius) -- (-90:0.1*\LocalRadius) -- (-45:\LocalRadius);
\end{tikzpicture}
\begin{tikzpicture}[scale=#1]
\node (0,0) {\tiny $+$};
\end{tikzpicture}
\begin{tikzpicture}[scale=#1]
\MacroDashedCircle
\draw[red, thick,rounded corners=3pt] (135:\LocalRadius) --  (180:0.1*\LocalRadius) -- (-135:\LocalRadius);
\draw[red, thick,rounded corners=3pt]  (45:\LocalRadius) -- (0:0.1*\LocalRadius) -- (-45:\LocalRadius);
\end{tikzpicture}
}
\begin{center}
\TikzSkeinRelation{2}
\caption{Skein relation for ordinary arcs}
\label{fig:skein}
\end{center}
\end{figure}

\begin{thm}[{\cite[Props. 6.4, 6.5, 6.6 and Cor. 6.18]{MW13}}]
\label{thm:MW_skein_relations}
Let $T$ be a triangulation of a marked surface, with or without punctures.
Let $C, C_+, C_-$ be as in Definition \ref{defn:smoothing}. Then we have the following identity in $\mathcal{A}(B_T)$: $$x_C = x_{C_+} + x_{C_-},$$ 
\end{thm}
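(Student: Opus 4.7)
The plan is to establish the skein identity $x_C = x_{C_+} + x_{C_-}$ combinatorially by exhibiting a weight-preserving bijection between the (good) matchings of the snake/band graphs counted on the left-hand side and the disjoint union of those counted on the right-hand side. By induction on the number of crossings, I would reduce to the case where exactly one crossing is being resolved; this lets me localize the entire argument to a small neighborhood of the chosen crossing point $c$.

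Next, I would fix the local picture near $c$. The crossing sits inside a quadrilateral of the ambient triangulation $T$ (generically), whose four sides are arcs or boundary segments and whose diagonal is an arc of $T$ that both strands pass through just before and just after $c$. Each strand of $C$ thus contributes a short sub-walk through two of the four sides, and this sub-walk is encoded by a specific block of tiles in the snake graph $G_{T,\gamma_1}$ (respectively $G_{T,\gamma_2}$, or the self-crossing $G_{T,\gamma}$). The smoothings $C_+$ and $C_-$ are precisely the two ways of rewiring the ends of these two sub-walks inside the quadrilateral, so the snake/band graphs $G_{T,C_+}$ and $G_{T,C_-}$ differ from $G_{T,C}$ only inside the corresponding local blocks.

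The main step is the matching bijection. In the product $x_C$ (viewed, via Definition~\ref{defn:generalized_arc_laurent_polynomial} or~\ref{defn:bracelet_laurent_polynomial}, as a sum of weighted matching contributions divided by $\cross(T,C)$), each contributing matching restricts to a pattern of edges inside the local block at $c$. I would enumerate these local patterns, observe that they partition into exactly two groups according to how the matched edges bridge the four sides of the quadrilateral, and set up a bijection from each group to local patterns of either $G_{T,C_+}$ or $G_{T,C_-}$ away from the crossing. Outside the block the matchings are left untouched, so weights agree up to the monomial $\cross(T,C)/\cross(T,C_\pm)$, which is exactly the correction needed to reconcile the three denominators in the three Laurent expansions. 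The band-graph case for loops is handled identically, with ``good matching'' replacing ``perfect matching'' and one extra tile-condition to check.

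The main obstacle will be the degenerate local pictures. When the quadrilateral at $c$ collapses because one of the neighboring triangles is self-folded, or when $c$ lies on an $\ell$-loop, or when the smoothing produces a contractible loop/kink, the block combinatorics must be matched against the special conventions of Definitions~\ref{defn:generalized_arc_laurent_polynomial} and~\ref{defn:bracelet_laurent_polynomial} (values $-2$ for contractible loops, $2$ for loops enclosing a single puncture, and $(-1)$ for kinks). Each of these degenerations gives a finite list of subcases, and the rerouting bijection must be adjusted so that the corresponding $\pm 2$ or sign appears with the right sign inside $x_{C_+} + x_{C_-}$. Verifying that every subcase closes up consistently is the bookkeeping heart of the argument.
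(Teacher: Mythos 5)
The paper does not actually prove this statement: it is quoted directly from Musiker--Williams \cite[Props.~6.4, 6.5, 6.6 and Cor.~6.18]{MW13}, so there is no internal argument to compare yours against. Your plan does follow the same broad strategy as the cited source (snake-/band-graph surgery at the crossing together with a weight-preserving correspondence on matchings), so the approach is right in spirit.

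However, as a proof your sketch has a genuine gap at its central step. The correspondence you need is not a local rewiring that ``leaves matchings untouched outside the block.'' Writing $x_C$ as a sum over pairs of matchings $(P_1,P_2)$ of $G_{T,\gamma_1}\times G_{T,\gamma_2}$ (or over matchings of the single graph $G_{T,\gamma}$ in the self-crossing case), the graphs $G_{T,C_+}$ and $G_{T,C_-}$ are obtained by cutting the original snake/band graphs at the tiles corresponding to the crossing and regluing the resulting halves in the two possible ways; consequently the bijection must transplant entire tails of $P_1$ and $P_2$ from one graph to the other. The nontrivial content is showing that each pair $(P_1,P_2)$ admits a well-defined ``switching position'' near the crossing that decides whether it is sent to a matching of $G_{T,C_+}$ or of $G_{T,C_-}$, and that this assignment is invertible; your assertion that the local patterns ``partition into exactly two groups'' is precisely the claim that needs proof, and it is where essentially all of the case analysis in \cite{MW13} lives. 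A second unaddressed point is that the smoothed multicurves $C_\pm$ are generally not in minimal position with respect to $T$, so $\cross(T,C_\pm)$ need not equal $\cross(T,C)$; the claim that the denominators ``reconcile'' therefore requires an argument (isotoping away excess crossings and tracking the effect on the matching sum), not just a bookkeeping remark. Finally, the induction on the number of crossings is superfluous: Definition~\ref{defn:smoothing} already concerns the resolution of a single crossing, and the smoothed curves may have more crossings in other configurations, so the induction would not obviously terminate in the form you state it.
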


We now look at an example of resolving a crossing using skein relations.

\begin{ex}
Consider the generalized arc $\gamma(2,9)$ in the annulus following the notation of Remark \ref{rem:gamIJ}. Using skein relations, we get that the Laurent polynomial $x(\gamma(2,9))$ corresponding to $\gamma(2,9)$ is the sum 
\[x(\gamma(4,7)) \, + \, x(\gamma(2,4)) \, x(\Brac_1).\]
See Fig.~\ref{fig:frieze:resolving_crossing}.

\begin{figure}[h!]
\begin{center}
\scalebox{.75}{\subfigure{
\begin{tikzpicture}
\draw (0,0) circle (1.25cm);
\foreach \x in {-144,-72,0,72,144} {
   \begin{scope}[rotate=\x]
    \node (\x) at (0,-1.25) [fill,circle,inner sep=1pt] {};
   \end{scope}
}

\filldraw[fill=gray!20](0,0) circle (.25cm);
\draw (144) node [above right] {$4$};
\draw (72) node [right] {$3$};
\draw (0) node [below] {$2$};
\draw (-72) node [left] {$1$};
\draw (-144) node [above left] {$5$};

\draw [ultra thick] (-.5,.0) to[out=90,in=125] (.4,.4) to[out=-180+135,in=30] (0);
\draw [ultra thick] (-.5,.0) to[out=270,in=220] (.4,-.4) to[out=30,in=-55] (144);

\draw[thin, red] (.62,-.2) circle (.2cm);

\draw (2,0) node[] {$=$}; 
\end{tikzpicture}

\begin{tikzpicture}

\draw (0,0) circle (1.25cm);
\foreach \x in {-144,-72,0,72,144} {
   \begin{scope}[rotate=\x]
    \node (\x) at (0,-1.25) [fill,circle,inner sep=1pt] {};
   \end{scope}
}

\filldraw[fill=gray!20](0,0) circle (.25cm);
\draw (144) node [above right] {$4$};
\draw (72) node [right] {$3$};
\draw (0) node [below] {$2$};
\draw (-72) node [left] {$1$};
\draw (-144) node [above left] {$5$};

\draw[ultra thick] (144) .. controls (-.8,.6) and (-.8,-.4) .. (0);

\draw (2,0) node[] {$+$}; 
\end{tikzpicture}

\begin{tikzpicture}
\draw (0,0) circle (1.25cm);
\foreach \x in {-144,-72,0,72,144} {
   \begin{scope}[rotate=\x]
    \node (\x) at (0,-1.25) [fill,circle,inner sep=1pt] {};
   \end{scope}
}

\filldraw[fill=gray!20](0,0) circle (.25cm);
\draw (144) node [above right] {$4$};
\draw (72) node [right] {$3$};
\draw (0) node [below] {$2$};
\draw (-72) node [left] {$1$};
\draw (-144) node [above left] {$5$};

\draw[ultra thick,blue] (0,0) circle (.5cm);
\draw[ultra thick] (0) .. controls (.8,-.6) and (.8,-.2) .. (144);

\end{tikzpicture}
}}
\caption{Resolving a self-crossing.}
\label{fig:frieze:resolving_crossing}
\end{center}
\end{figure}
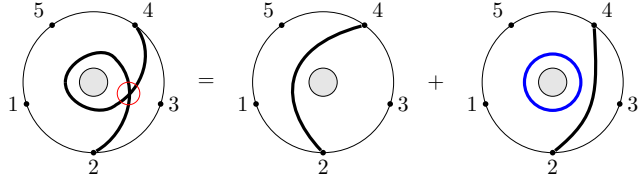
\end{ex}

The proof that the generalized (ordinary) arcs form an infinite frieze pattern now follows easily from the skein relation.

\begin{proof}[Proof of Theorem \ref{thm:Frieze_Laurent}]
%

To show that $\mathcal{F}_{Bd}$ is a frieze pattern, we need to check that for every diamond 
$$
\begin{array}{ccccccc}
 &b&\\
 a&&d\\
 &c&
\end{array}$$
in $\mathcal{F}_{Bd}$, the equation $ad-bc =1$ is satisfied.

From the labeling convention, every diamond in $\mathcal{F}_{Bd}$ has indices of the form
$$
\begin{array}{ccccccc}
 &(i+1,i+m)&\\
 (i,i+m)&&(i+1,i+m+1)\\
 &(i,i+m+1)&
\end{array}$$
where $m \in \mathbb{Z}_{\geq 1}$. We want to show that \[x(\gamma(i,i+m))\, x(\gamma(i+1,i+m+1)) = 1 + x(\gamma(i+1,i+m)) \, x(\gamma(i,i+m+1)).\]

Consider the arcs $\gamma(i,i+m), \gamma(i+1,i+m+1)$ drawn in the universal cover of our surface (Fig.~\ref{fig:skeinrelationsproof}, top).
The arcs $\gamma(i,i+m), \gamma(i+1,i+m+1)$ have exactly one crossing point. Using the skein relations (Theorem \ref{thm:MW_skein_relations}), we have that 
\begin{align*}
x(\gamma(i,i+m))\,x(\gamma(i+1,i+m+1))
= & x(\gamma(i,i+1))\,x(\gamma(i+m,i+m+1))\\ + & x(\gamma(i+1,i+m))\,x(\gamma(i,i+m+1)) \\
= & 1 + x(\gamma(i+1,i+m))\,x(\gamma(i,i+m+1)).
\end{align*}

This holds for every diamond in our pattern, and thus we have constructed a frieze pattern of Laurent polynomials corresponding to the set of all generalized peripheral arcs of $Bd$.
\end{proof}

\newcommand\skeinrelationscale{0.30}
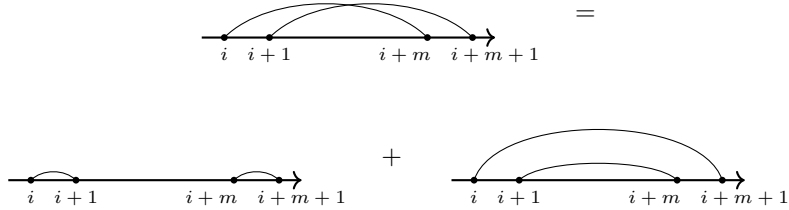
\begin{figure}
	\begin{center}
	\begin{tikzpicture}[scale = \skeinrelationscale]
		\tikzstyle{every node} = [font = \small]
		\foreach \x in {0}
		{
			\foreach \y in {-8}
			{

				\draw[black,thick,->] (\x-7,\y-2) -- (\x+6,\y-2);
				
				\draw (\x-6,\y-2) .. controls (\x-4,\y-0) and (\x+1,\y-0) .. (\x+3,\y-2);
				\draw (\x-4,\y-2) .. controls (\x-2,\y-0) and (\x+3,\y-0) .. (\x+5,\y-2);

				\foreach \t in {9,11,18,20}
				{
					\fill (-15+\t,\y-2) circle (.15);

				}
				\fill (\x-6,\y-2) node[below]{\tiny $i$};
				\fill (\x-4,\y-2) node[below]{\tiny $i+1$};
				\fill (\x+2,\y-2) node[below]{\tiny $i+m$};
				\fill (\x+6,\y-2) node[below]{\tiny $i+m+1$};

				\draw(\x+10, \y-1) node[] {$=$};
			}
		}
		\end{tikzpicture}\\

\vspace{5mm}
\begin{tikzpicture}[scale = \skeinrelationscale]
		\tikzstyle{every node} = [font = \small]
		\foreach \x in {0}
		{
			\foreach \y in {-8}
			{
				\draw[black,thick,->] (\x-7,\y-2) -- (\x+6,\y-2);

				\draw (\x-6,\y-2) .. controls (\x-5.5,\y-1.5) and (\x-4.5,\y-1.5) .. (\x-4,\y-2);
				\draw (\x+3,\y-2) .. controls (\x+3.5,\y-1.5) and (\x+4.5,\y-1.5) .. (\x+5,\y-2);

				\foreach \t in {9,11,18,20}
				{
					\fill (-15+\t,\y-2) circle (.15);

				}
				\fill (\x-6,\y-2) node[below]{\tiny $i$};
				\fill (\x-4,\y-2) node[below]{\tiny $i+1$};
				\fill (\x+2,\y-2) node[below]{\tiny $i+m$};
				\fill (\x+6,\y-2) node[below]{\tiny $i+m+1$};
		\draw (\x+10, \y-1) node[] {$+$};
			}
		}
		\end{tikzpicture}
\quad
    \begin{tikzpicture}[scale = \skeinrelationscale]
		\tikzstyle{every node} = [font = \small]
		\foreach \x in {0}
		{
			\foreach \y in {-8}
			{
				\draw[black,thick,->] (\x-7,\y-2) -- (\x+6,\y-2);

				\draw (\x-6,\y-2) .. controls (\x-5,\y+1) and (\x+4,\y+1) .. (\x+5,\y-2);
				\draw (\x-4,\y-2) .. controls (\x-3,\y-1) and (\x+2,\y-1) .. (\x+3,\y-2);

				\foreach \t in {9,11,18,20}
				{
					\fill (-15+\t,\y-2) circle (.15);

				}
				\fill (\x-6,\y-2) node[below]{\tiny $i$};
				\fill (\x-4,\y-2) node[below]{\tiny $i+1$};
				\fill (\x+2,\y-2) node[below]{\tiny $i+m$};
				\fill (\x+6,\y-2) node[below]{\tiny $i+m+1$};
			}
		}
    \end{tikzpicture}
\end{center}
\caption{Applying skein relations to prove Theorem \ref{thm:Frieze_Laurent}}
\label{fig:skeinrelationsproof}
\end{figure}

\section{Progression formulas}
\label{sec:proof_progression_formulas}

In Section \ref{sec:proof_frieze_laurent}, we constructed infinite frieze patterns of Laurent polynomial entries. In this section, we present formulas governing relations among these entries. These generalize the relations given in~\cite{BFPT16}, in the sense that~\cite[Thm. 2.5]{BFPT16} is a special case of the formulas.

\subsection{Complementary arcs}
\label{subsec:comp_arcs}

For $1 \leq i, j \leq n$ and $k \geq 1$, we let $\gamma_k(i,j)$ denote the generalized peripheral arc in $C_{n,m}$ or $D_n$ that lifts to the covering by the strip as follows (using the notation of Remark \ref{rem:gamIJ}):
$$\gamma_k(i,j) = \begin{cases} \gamma\left(i,j+(k-1)n\right) & \mathrm{~if~} i < j \\ 
\gamma\left(i,j + kn\right) & \mathrm{~if~} i \geq j \end{cases}.$$
That is, $\gamma_k(i,j)$ is the generalized peripheral arc that starts at the marked point $i$ and finishes at the marked point $j$ (possibly with $i=j$) with $(k-1)$ self-crossings such that the boundary Bd is to the right of the curve as we trace it.  

\begin{defn}[complementary arc] Using the above shorthand notation, we define the arc complementary to $\gamma_k = \gamma_k(i,j)$ as 
$$\gamma_k(i,j)^C =  \begin{cases} \gamma\left(j, i+kn \right) & \mathrm{~if~} i < j \\ 
\gamma\left(j, i + (k-1)n\right) & \mathrm{~if~} i \geq j \end{cases}.$$
\end{defn}

\begin{rem}
When $i\not = j$, the complementary arc $\gamma_k^C$ to $\gamma_k=\gamma_k(i,j)$ can be described as the generalized arc (i.e. up to homotopy) starting at the marked point $j$ and finishing at the marked point $i$ and retaining $(k-1)$ self-crossings while following the orientation of the surface.  See Fig.~\ref{fig:complementary_arcs}.  In this case, $(\gamma_k^C)^C = \gamma_k$.  On the other hand, when $i=j$, observe that complementation is non-involutive and simply decreases the number of self-intersections by one. 
For example, we think of the empty arc as the complementary arc of the once-punctured monogon from $i$ to $i$, and of the once-punctured monogon from $i$ to $i$ as the complementary arc of the loop from $i$ to $i$ which goes around the boundary $Bd$ twice. 
\end{rem}

\begin{figure}[h]
\begin{center}
\newcommand\pentascale{0.6} 
\def\FigScale{0.42} 
\tikzset{->-/.style={decoration={
  markings,
  mark=at position #1 with {\arrow{stealth}}},postaction={decorate}}}
 \newcommand\plaindisk[1]
 {
 \begin{scope}[scale=\pentascale]
 \coordinate (a) at (0,0);
\draw[] (0,0) circle (1.25cm);
\node (a) at (0,0) [opacity=0.65,fill,circle,inner sep=1.5pt] {};

\foreach \x in {-144,-72,0,72,144} {
   \begin{scope}[rotate=\x]
   \coordinate (\x) at (0,-1.25);
    \node (\x) at (0,-1.25) [opacity=0.5,fill,circle,inner sep=0.9pt] {};
   \end{scope}
}

\end{scope}
}

 \begin{tikzpicture}
\plaindisk{}
\draw[thick,->-=.7, bend right=30] (-72) to (72); 
\draw (-72) node [fill,blue,circle,inner sep=1.3pt] {};
\draw (72) node [black,fill=mygreen,circle,inner sep=1.3pt] {};
\draw (-72) node [left,blue] {$i$};
\draw (72) node [right,mygreen] {$j$};
\end{tikzpicture}
\begin{tikzpicture}
\end{tikzpicture}
\hskip -2mm
\begin{tikzpicture}
\plaindisk{}
\draw[thick,->-=.7,bend right=90] (72) to (-72); 
\draw (72) node [black,fill=mygreen,circle,inner sep=1.3pt] {};
\draw (-72) node [blue,fill,circle,inner sep=1.3pt] {};
\draw (-72) node [left,blue] {$i$};
\draw (72) node [right,mygreen] {$j$};
\end{tikzpicture}
\quad\quad\quad
\def\MacroFigSlideBraceletsAnnulus{
\draw (0,0) circle (2cm);
\node (a) at (0,0) [opacity=0.65,fill,circle,inner sep=2pt] {};

\draw (120:2cm) node [fill,blue,circle,inner sep=1.3pt] {};

\draw (-180:2cm) node [black,fill=mygreen,circle,inner sep=1.3pt] {};

\draw[] (120:2cm) node[left] {$i$};
\draw[] (-180:2cm) node[left] {$j$};
}
\begin{tikzpicture}[scale=\FigScale] 
\MacroFigSlideBraceletsAnnulus

\draw[thick, black, rounded corners,->-=0.25] ([shift=(120:2cm)]0,0) -- ([shift = (-170:1.1cm)]0,0) 
arc (-170:120:1.1cm);

\draw[thick, black, rounded corners] ([shift=(120:1.1cm)]0,0) -- ([shift = (-180:1.5cm)]0,0) 
arc (-180:140:1.5cm);

\draw[thick, black, ->-=.7] ([shift=(140:1.5cm)]0,0) -- ([shift = (-180:2cm)]0,0);
\end{tikzpicture}
\quad
\begin{tikzpicture}[scale=\FigScale] 
\MacroFigSlideBraceletsAnnulus

\draw[thick,black, rounded corners, ,->-=.1] ([shift=(-180:2cm)]0,0) 
-- ([shift = (-220:1.5cm)]0,0) 
-- ([shift = (-200:0.7cm)]0,0) 
arc (-200:140:0.7cm);

\draw[ thick,black, rounded corners] ([shift=(140:0.7cm)]0,0) -- ([shift = (-170:1.1cm)]0,0) 
arc (-170:120:1.1cm);

\draw[thick,black, rounded corners,->-=0.9] ([shift=(120:1.1cm)]0,0) -- ([shift = (-180:1.5cm)]0,0) arc (-180:110:1.5cm) -- ([shift = (120:2cm)]0,0);
\end{tikzpicture}
\caption{Examples of involutive complementary arcs $\gamma_1$, $\gamma_1^C$ and $\gamma_3$, $\gamma_3^C$.}\label{fig:complementary_arcs}
\end{center}
\end{figure}

\begin{rem}
It is well-known that a finite frieze pattern has glide-symmetry. In an infinite frieze pattern $\mathcal F$, we do not have glide-symmetry, but instead we have what we call \emph{complement-symmetry} along with the translation-symmetry. 
See Fig.~\ref{fig:complement_symmetry} for an example. Note that each $kn$-th row ($k \in \mathbb{Z}_{\geq 1}$) of $\mathcal F$ corresponds to the generalized arcs of the form $\gamma_k(i,i)$ for which complementation is not involutive.
\input{folder_frieze/fig_complement_symmetry}
\end{rem}

\begin{thm}[progression formulas]
\label{thm:progression_formula}
Let $\ga_1$ be a peripheral arc or a boundary edge of $(S,M)$ starting and finishing at points $i$ and $j$. 
For $k \geq 2$ and $1\leq m \leq k-1$, we have
\[
x(\gamma_k) = x(\gamma_m) \, x(Brac_{k-m}) + x(\gamma_{k-2m+1}^C).
\]
For $r \geq 0$, $\gamma_{-r}^C$ is defined to be the curve $\gamma_{r+1}$ with a kink, so that $x(\gamma_{-r}^C) = - x(\gamma_{r+1})$.
\end{thm}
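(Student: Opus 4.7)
The plan is to apply the skein relation (Theorem~\ref{thm:MW_skein_relations}) to a single, carefully chosen self-crossing of $\gamma_k$. Since $\gamma_k$ is drawn as a spiral from $i$ to $j$ wrapping $k-1$ times around the boundary component $Bd$, it has exactly $k-1$ self-crossings. Label these $c_1, c_2, \ldots, c_{k-1}$ in the natural order in which they appear as the curve is traced from $i$ to $j$. For the given index $m$ with $1 \leq m \leq k-1$, applying the skein relation at $c_m$ yields
\[
x(\gamma_k) \;=\; x(C_+) + x(C_-),
\]
where $C_+$ and $C_-$ are the two local resolutions at $c_m$. The theorem will follow once we identify $C_+$ with the disjoint multicurve $\gamma_m \sqcup \Brac_{k-m}$ and $C_-$ with $\gamma_{k-2m+1}^C$ (extended via the stated kink convention when $k-2m+1 \leq 0$).

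The resolution $C_+$ disconnects the spiral at $c_m$: the inner portion becomes a closed loop wrapping $k-m$ times around $Bd$ with $(k-m)-1$ self-crossings, which by Definition~\ref{defn:bracelets} is the bracelet $\Brac_{k-m}$; the outer portion, together with the initial and terminal segments of $\gamma_k$, becomes a peripheral arc from $i$ to $j$ with $m-1$ self-crossings, i.e., $\gamma_m$. Because these two curves are topologically disjoint after the smoothing, the Laurent polynomial factors, giving $x(C_+) = x(\gamma_m) \, x(\Brac_{k-m})$.

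The resolution $C_-$ reconnects the two strands at $c_m$ so that the result is a single peripheral arc from $i$ to $j$ whose wrapping is reversed relative to $\gamma_k$. Drawing $\gamma_k$ in the universal cover (the infinite strip $\mathcal{U}$), where $\gamma_k$ lifts to a simple arc crossing its deck translates transversely, one can directly verify that $C_-$ is topologically $\gamma_{k-2m+1}^C$ whenever $k-2m+1 \geq 1$. When $k-2m+1 \leq 0$, the resolved arc is over-wound and acquires a contractible kink; the stated convention $\gamma_{-r}^C := \gamma_{r+1}$ with a kink, combined with Definition~\ref{defn:generalized_arc_laurent_polynomial}(2), then gives $x(C_-) = x(\gamma_{k-2m+1}^C) = -x(\gamma_{2m-k})$. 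Combining the two identifications yields the desired progression formula.

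The principal obstacle is the topological verification of $C_-$, especially in the over-winding regime $k - 2m + 1 \leq 0$ where the contractible kinks appear; tracking the wrap counts and kinks carefully in $\mathcal{U}$ is the most delicate part of the argument.
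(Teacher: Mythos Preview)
Your proposal is correct and follows essentially the same approach as the paper: apply the skein relation at the $m$-th self-crossing of $\gamma_k$, identify one resolution as $\gamma_m \sqcup \Brac_{k-m}$, and identify the other as $\gamma_{k-2m+1}^C$ via a computation in the universal cover. The paper's proof differs only in that it supplies the detailed trace through the strip (its Lemma~\ref{lem:kminus2mplus1}) that you flag as the ``principal obstacle'' but do not carry out explicitly; in particular, the paper fixes a specific drawing of $\gamma_k$ (vertical in a frame $Reg_0$, then diagonal), labels the four local segments $north_c, south_c, east_c, west_c$, and tracks the resulting curve frame-by-frame to pin down the wrap count and the kink in the regime $k-2m+1\leq 0$.
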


\begin{rem}\label{rem:m1}
Special cases of above theorem are when $m=1$ (see Fig.~\ref{fig:frieze:gamma4})
and $m=k-1$.
We have
\begin{gather}
\label{eqn:m_1}
x(\gamma_k) = x(\gamma_1) \, x(Brac_{k-1}) + x(\gamma_{k-1}^C),\\
\label{eqn:m_k_min_1}x(\gamma_k) = x(\gamma_{k-1}) \, x(Brac_{1}) - x(\gamma_{k-2})
\end{gather}
Compare (\ref{eqn:m_k_min_1}) with~\cite[Thm. 2.5]{BFPT16}.  In particular, we use the definition $x(\gamma_{-k+3}^C) = - x(\gamma_{k-2})$ in (\ref{eqn:m_k_min_1}).

Furthermore, if $\ga_1$ is the boundary edge between $i$ and $i+1$, then $x(\ga_1)=1$, so, due to (\ref{eqn:m_1}), we have
\begin{gather}
\label{eqn:m_1_growth_coefficient}
 x(Brac_{k}) = x(\gamma_{k+1}) - x(\gamma_{k}^C).
\end{gather}
\end{rem}

\subsection{Proof of Theorem \ref{thm:progression_formula}}
\label{subsec:prog_thm_proof}

Let $\gamma_k:=\gamma_k(i,j)$.
We draw $\gamma_k$ so that it first closely follows the other boundary (or the puncture) and then spirals out.

\begin{figure}\begin{center}
\def\mycolor{black}
\tikzset{->-/.style={decoration={
  markings,
  mark=at position #1 with {\arrow{stealth}}},postaction={decorate}}}
\def\FigScale{0.46}
\def\MacroFigSlideBraceletsAnnulus{
\draw (0,0) circle (2cm);
\draw[fill=gray!20] (0,0) circle (.4cm);
\draw (120:2cm) node [fill,blue,circle,inner sep=1.3pt] {};
\draw (-180:2cm) node [black,fill=mygreen,circle,inner sep=1.3pt] {};
\draw[] (120:2cm) node[left] {$i$};
\draw[] (-180:2cm) node[left] {$j$};
}
\begin{tikzpicture}[scale=\FigScale] 
\MacroFigSlideBraceletsAnnulus
\draw[thick, \mycolor, rounded corners, ,->-=.08] ([shift=(120:2cm)]0,0) -- ([shift = (-200:0.7cm)]0,0)
arc (-200:140:0.7cm);

\draw[thick, \mycolor, rounded corners] ([shift=(140:0.7cm)]0,0) -- ([shift = (-170:1.1cm)]0,0) 
arc (-170:120:1.1cm);

\draw[thick, \mycolor, rounded corners] ([shift=(120:1.1cm)]0,0) -- ([shift = (-180:1.5cm)]0,0) 
arc (-180:140:1.5cm);

\draw[thick, \mycolor, ->-=.7] ([shift=(140:1.5cm)]0,0) -- ([shift = (-180:2cm)]0,0);

\draw[thin, red] (125:1.5cm) circle (.25cm);

\draw (3,0) node[] {$=$}; 
\end{tikzpicture}
\begin{tikzpicture}[scale=\FigScale] 
\MacroFigSlideBraceletsAnnulus
\draw[ thick,\mycolor, rounded corners,->-=0.7] ([shift=(120:2cm)]0,0) -- ([shift = (140:1.7cm)]0,0) -- ([shift = (-180:2cm)]0,0); 

\draw[thick,blue, rounded corners] 
 ([shift=(140:0.7cm)]0,0) -- ([shift = (-180:1.5cm)]0,0)   arc (-180:140:1.5cm) -- ([shift = (-180:1.1cm)]0,0);
  
  \draw[thick,blue, rounded corners] 
 ([shift = (-180:1.1cm)]0,0)
  arc (-180:140:1.1cm) -- ([shift = (-180:0.7cm)]0,0);
  
\draw[ thick,blue] ([shift=(-180:0.7cm)]0,0) arc (-180:140:0.7cm);
\draw (3,0) node[] {$+$}; 
\end{tikzpicture}
\begin{tikzpicture}[scale=\FigScale] 
\MacroFigSlideBraceletsAnnulus

\draw[thick,\mycolor, rounded corners, ,->-=.1] ([shift=(-180:2cm)]0,0) 
-- ([shift = (-220:1.5cm)]0,0) 
-- ([shift = (-200:0.7cm)]0,0) 
arc (-200:140:0.7cm);

\draw[ thick,\mycolor, rounded corners] ([shift=(140:0.7cm)]0,0) -- ([shift = (-170:1.1cm)]0,0) 
arc (-170:120:1.1cm);

\draw[thick,\mycolor, rounded corners,->-=0.9] ([shift=(120:1.1cm)]0,0) -- ([shift = (-180:1.5cm)]0,0) arc (-180:110:1.5cm) -- ([shift = (120:2cm)]0,0);
\end{tikzpicture}

\caption{Case $m=1:$ By the progression formula (Theorem \ref{thm:progression_formula}), we have $x(\gamma_4 (i,j) ) = x(\gamma_1 (i,j) ) \  x(Brac_{3}) + x(\gamma_{3}(i,j)^C)$.}
\label{fig:frieze:gamma4}
\end{center}\end{figure}

\begin{figure}
\begin{tikzpicture}[scale = 0.34]
\tikzstyle{every node} = [font = \small]
	\foreach \x in {0}
	{
	\foreach \y in {-8}
		{
	
		\draw[black,thick,->] (\x-22,\y-2) -- (\x+20,\y-2);
		
			\foreach \t in {-20,-18,-16,-14,-12,-10, -8, -6, -4, -2, 0, 2, 4, 6, 8, 10,12,14,16,18}
				{
					\fill (\t,\y-2) circle (.1);

				}
				\foreach \t in {-18,-14,-10, -6, -2, 2, 6,10,14,18}
				{
					\fill (\t,\y-2) node[below]{$j$};

				}
				\foreach \t in {-20,-16,-12, -8, -4, 0, 4, 8,12,16}
				{
					\fill (\t,\y-2) node[below]{$i$};

				}	
				\foreach \t in {-21,-17,-13,-9, -5, -1, 3, 7, 11,15,19}
				{
					\draw[dashed] (\t,\y-2)--(\t, \y+8);

				}
			
				\fill (+17,\y+8) node[above]{$Reg_{k-1}$};
				\fill (+1,\y+8) node[above]{$\cdots$};
				\fill (-11,\y+8) node[above]{$Reg_{2}$};
				\fill (-15,\y+8) node[above]{$Reg_{1}$};
				\fill (-19,\y+8) node[above]{$Reg_{0}$};
			
				
				\draw[]  plot[rounded corners] coordinates {(-22,\y+7.5) (14,\y-2) };
				\draw[]  plot[rounded corners] coordinates {(-22,\y+6.5) (10,\y-2) };
				\draw[]  plot[rounded corners] coordinates {(-22,\y+5.5) (6,\y-2) };
				\draw[]  plot[rounded corners] coordinates {(-22,\y+4.5) (2,\y-2) };
				\draw[]  plot[rounded corners] coordinates {(-22,\y+3.5) (-2,\y-2) };
				\draw[]  plot[rounded corners] coordinates {(-22,\y+2.5) (-6,\y-2) };
				\draw[]  plot[rounded corners] coordinates {(-22,\y+1.5) (-10,\y-2) };
				\draw[]  plot[rounded corners] coordinates {(-22,\y+0.5) (-14,\y-2) };
				\draw[]  plot[rounded corners] coordinates {(-22,\y-0.5) (-18,\y-2) };
				
				 \draw[very thick]  plot[rounded corners] coordinates {(-20,\y-2) (-20,\y+8) (18,\y-2)};
				 \draw[]  plot[rounded corners] coordinates {(-16,\y-2) (-16,\y+8) (19,\y-1)};
				 \draw[]  plot[rounded corners] coordinates {(-12,\y-2) (-12,\y+8)(19,\y+0) };
				\draw[]  plot[rounded corners] coordinates {(-8,\y-2) (-8,\y+8)(19,\y+1) };
				\draw[]  plot[rounded corners] coordinates {(-4,\y-2) (-4,\y+8)(19,\y+2) };
				\draw[]  plot[rounded corners] coordinates {(0,\y-2) (0,\y+8)(19,\y+3) };
				\draw[]  plot[rounded corners] coordinates {(4,\y-2) (4,\y+8)(19,\y+4) };
				\draw[]  plot[rounded corners] coordinates {(8,\y-2) (8,\y+8)(19,\y+5) };
				\draw[]  plot[rounded corners] coordinates {(12,\y-2) (12,\y+8)(19,\y+6) };
				\draw[]  plot[rounded corners] coordinates {(16,\y-2) (16,\y+8)(19,\y+7) };

\draw[red] (-20,\y+2) circle (.3);
\draw[red] (-19.9,\y+2.15) node[right] {$c_0$};
}
}
\end{tikzpicture}
\caption{Lift of $\gamma_k$ for $k=10$, $m=4$ drawn on the strip.}
\label{fig:progression_unresolved}

\begin{tikzpicture}[scale = 0.47]
\tikzstyle{every node} = [font = \small]
	\foreach \x in {0}
	{
	\foreach \y in {-8}
		{
	
		\draw[black,thick,->] (\x-15,\y-2) -- (\x+15,\y-2);
		
			\foreach \t in {-14,-12,-10, -8, -6, -4, -2, 0, 2, 4, 6, 8, 10,12,14}
				{
					\fill (\t,\y-2) circle (.1);

				}
				\foreach \t in {-14,-10, -6, -2, 2, 6,10,14}
				{
					\fill (\t,\y-2) node[below]{$j$};

				}
				\foreach \t in {-12, -8, -4, 0, 4, 8,12}
				{
					\fill (\t,\y-2) node[below]{$i$};

				}	
				\foreach \t in {-13,-9, -5, -1, 3, 7, 11}
				{
					\draw[dashed] (\t,\y-2)--(\t, \y+8);

				}
			
				\fill (+13,\y+8) node[above]{$Reg_{k-1}$};
				\fill (+9,\y+8) node[above]{$\cdots$};
				\fill (+5,\y+8) node[above]{$Reg_{4}$};
				\fill (+1,\y+8) node[above]{$Reg_{3}$};
				\fill (-3,\y+8) node[above]{$Reg_{2}$};
				\fill (-7,\y+8) node[above]{$Reg_{1}$};
				\fill (-11,\y+8) node[above]{$Reg_{0}$};

				\draw[very thick]  plot[rounded corners] coordinates {(-14,\y+2)(-12,\y+1.5)(-12,\y+7.5)(12,\y+1.5)(12,\y+7.5)(14,\y+7)};
				\draw  plot[rounded corners] coordinates {(-14,\y+3)(-8,\y+1.5)(-8,\y+7.5)(14,\y+2)};
				\draw  plot[rounded corners] coordinates {(-14,\y+4)(-4,\y+1.5)(-4,\y+7.5)(14,\y+3)};
				\draw  plot[rounded corners] coordinates {(-14,\y+5)(0,\y+1.5)(0,\y+7.5)(14,\y+4)};
				\draw  plot[rounded corners] coordinates {(-14,\y+6)(4,\y+1.5)(4,\y+7.5)(14,\y+5)};
				\draw  plot[rounded corners] coordinates {(-14,\y+7)(8,\y+1.5)(8,\y+7.5)(14,\y+6)};

				\draw  plot[rounded corners] coordinates {(-14,\y-1)(-10,\y-2)};
				\draw  plot[rounded corners] coordinates {(-14,\y-0)(-6,\y-2)};
				\draw  plot[rounded corners] coordinates {(-14,\y+1)(-2,\y-2)};

				\draw[very thick]  plot[rounded corners] coordinates {(-12,\y-2)(-12, \y+1.5)(2,\y-2)};
				\draw  plot[rounded corners] coordinates {(-8,\y-2)(-8, \y+1.5)(6,\y-2)};
				\draw  plot[rounded corners] coordinates {(-4,\y-2)(-4, \y+1.5)(10,\y-2)};
				\draw  plot[rounded corners] coordinates {(0,\y-2)(0, \y+1.5)(14,\y-2)};
				\draw  plot[rounded corners] coordinates {(4,\y-2)(4, \y+1.5)(14,\y-1)};
				\draw  plot[rounded corners] coordinates {(8,\y-2)(8, \y+1.5)(14,\y-0)};
				\draw  plot[rounded corners] coordinates {(12,\y-2)(12, \y+1.5)(14,\y+1)};

\draw[red] (-12,\y+1.5) circle (.3);
\draw[red] (-11.9,\y+1.6) node[right] {$c_0$};
}
}
\end{tikzpicture}
\caption{Lifts of $\gamma_{m}$ and $Brac_{k-m}$ for $k=10$, $m=4$ drawn on the strip.}
\label{fig:progression_bracelet}

\begin{tikzpicture}[scale = 0.47]
\tikzstyle{every node} = [font = \small]
	\foreach \x in {0}
	{
	\foreach \y in {-8}
		{
	
		\draw[black,thick,->] (\x-15,\y-2) -- (\x+15,\y-2);
		
			\foreach \t in {-14,-12,-10, -8, -6, -4, -2, 0, 2, 4, 6, 8, 10,12,14}
				{
					\fill (\t,\y-2) circle (.1);

				}
				\foreach \t in {-14,-10, -6, -2, 2, 6,10,14}
				{
					\fill (\t,\y-2) node[below]{$j$};

				}
				\foreach \t in {-12, -8, -4, 0, 4, 8,12}
				{
					\fill (\t,\y-2) node[below]{$i$};

				}	
				\foreach \t in {-13,-9, -5, -1, 3, 7, 11}
				{
					\draw[dashed] (\t,\y-2)--(\t, \y+8);

				}
			
				\fill (+13,\y+8) node[above]{$Reg_{0}$};
				\fill (+9,\y+8) node[above]{$Reg_{-1}$};
				\fill (+5,\y+8) node[above]{$Reg_{-2}$};
				\fill (+1,\y+8) node[above]{$Reg_{-3}$};
				\fill (-3,\y+8) node[above]{$Reg_{-4}$};
				\fill (-7,\y+8) node[above]{$\cdots$};
				\fill (-11,\y+8) node[above]{$Reg_{-(k-m)}$};
			
		 \draw[very thick]  plot[rounded corners] coordinates {(12,\y-2) (12,\y+1.5) (-12,\y+7.5) (-12,\y+1.5) (2,\y-2)};
				 \draw  plot[rounded corners] coordinates {(8,\y-2) (8,\y+1.5) (-14,\y+7)};
				 \draw  plot[rounded corners] coordinates {(4,\y-2) (4,\y+1.5) (-14,\y+6)};
				 \draw  plot[rounded corners] coordinates {(0,\y-2) (0,\y+1.5) (-14,\y+5)};
				 \draw  plot[rounded corners] coordinates {(-4,\y-2) (-4,\y+1.5) (-14,\y+4)};
				 \draw  plot[rounded corners] coordinates {(-8,\y-2) (-8,\y+1.5) (-14,\y+3)};
				 \draw  plot[rounded corners] coordinates {(-12,\y-2) (-12,\y+1.5) (-14,\y+2)};
				
				\draw  plot[rounded corners] coordinates {(-10,\y-2)(-14,\y-1)};
				 \draw  plot[rounded corners] coordinates {(-6,\y-2)(-14,\y+0)};
				 \draw  plot[rounded corners] coordinates {(-2,\y-2)(-14,\y+1)};
				 \draw  plot[rounded corners] coordinates {(14,\y+1) (12,\y+1.5) (12,\y+7.5) (14,\y+7)};
				 \draw  plot[rounded corners] coordinates {(14,\y+2) (-8,\y+7.5) (-8,\y+1.5) (6,\y-2)};
				 \draw  plot[rounded corners] coordinates {(14,\y+3) (-4,\y+7.5) (-4,\y+1.5) (10,\y-2)};
				 \draw  plot[rounded corners] coordinates {(14,\y+4) (0,\y+7.5) (0,\y+1.5) (14,\y-2)};
				 \draw  plot[rounded corners] coordinates {(14,\y+0) (8,\y+1.5) (8,\y+7.5) (14,\y+6)};
				\draw  plot[rounded corners] coordinates {(14,\y-1) (4,\y+1.5) (4,\y+7.5) (14,\y+5)};

\draw[red] (12,\y+1.5) circle (.3);
\draw[red] (12.1,\y+1.6) node[right] {$c_0$};
}
}
\end{tikzpicture}
\caption{Lift of $\gamma_{k-2m+1}^C$ for $k=10$, $m=4$ drawn on the strip.}
\label{fig:progression}

\begin{tikzpicture}[scale = 0.34]
\tikzstyle{every node} = [font = \small]
	\foreach \x in {0}
	{
	\foreach \y in {-8}
		{
	
		\draw[black,thick,->] (\x-22,\y-2) -- (\x+20,\y-2);
		
			\foreach \t in {-20,-18,-16,-14,-12,-10, -8, -6, -4, -2, 0, 2, 4, 6, 8, 10,12,14,16,18}
				{
					\fill (\t,\y-2) circle (.1);

				}
				\foreach \t in {-18,-14,-10, -6, -2, 2, 6,10,14,18}
				{
					\fill (\t,\y-2) node[below]{$j$};

				}
				\foreach \t in {-20,-16,-12, -8, -4, 0, 4, 8,12,16}
				{
					\fill (\t,\y-2) node[below]{$i$};

				}	
				\foreach \t in {-21,-17,-13,-9, -5, -1, 3, 7, 11,15,19}
				{
					\draw[dashed] (\t,\y-2)--(\t, \y+8);

				}
			
				\fill (+17,\y+8) node[above]{$\cdots$};
				\fill (+9,\y+8) node[above]{$Reg_{-k+2m-1}$};
				\fill (+1,\y+8) node[above]{$\cdots$};
				\fill (-7,\y+8) node[above]{$Reg_{1}$};
				\fill (-11,\y+8) node[above]{$Reg_{0}$};
				\fill (-15,\y+8) node[above]{$Reg_{-1}$};
				\fill (-19,\y+8) node[above]{$Reg_{-2}$};

				\draw[very thick]  plot[rounded corners] coordinates {(-12,\y-2) (-12,\y+6)(-20,\y+8)(-20,\y+6)(10,\y-2)};
				\draw plot[rounded corners] coordinates {(-8,\y-2) (-8,\y+6)(-16,\y+8)(-16,\y+6)(14,\y-2)};
				\draw plot[rounded corners] coordinates {(-4,\y-2) (-4,\y+6)(-12,\y+8)(-12,\y+6)(18,\y-2)};
				\draw plot[rounded corners] coordinates {(0,\y-2) (0,\y+6)(-8,\y+8)(-8,\y+6)(19,\y-1.2)};
				\draw plot[rounded corners] coordinates {(4,\y-2) (4,\y+6)(-4,\y+8)(-4,\y+6)(19,\y-.13)};
				\draw plot[rounded corners] coordinates {(8,\y-2) (8,\y+6)(-0,\y+8)(-0,\y+6)(19,\y+.94)};
				\draw plot[rounded corners] coordinates {(12,\y-2) (12,\y+6)(4,\y+8)(4,\y+6)(19,\y+2.01)};
				\draw plot[rounded corners] coordinates {(16,\y-2) (16,\y+6)(8,\y+8)(8,\y+6)(19,\y+3.08)};
				
				\draw plot[rounded corners] coordinates {(19,\y+6.29)(12,\y+8)(12,\y+6)(19,\y+4.15)};
				\draw plot[rounded corners] coordinates {(19,\y+7.36)(16,\y+8)(16,\y+6)(19,\y+5.22)};
				
				\draw plot[rounded corners] coordinates {(-16,\y-2)(-16,\y+6)(-21,\y+7.36)};
				\draw plot[rounded corners] coordinates {(-20,\y-2)(-20,\y+6)(-21,\y+6.29)};
				\draw plot[rounded corners] coordinates {(-21,\y+5.22)(6,\y-2)};
				\draw plot[rounded corners] coordinates {(-21,\y+4.15)(2,\y-2)};
				\draw plot[rounded corners] coordinates {(-21,\y+3.08)(-2,\y-2)};
				\draw plot[rounded corners] coordinates {(-21,\y+2.01)(-6,\y-2)};
				\draw plot[rounded corners] coordinates {(-21,\y+.94)(-10,\y-2)};
				\draw plot[rounded corners] coordinates {(-21,\y-.13)(-14,\y-2)};
				\draw plot[rounded corners] coordinates {(-21,\y-1.2)(-18,\y-2)};

\draw[red] (-12,\y+6) circle (.3);
\draw[red] (-11.9,\y+6.15) node[right] {$c_0$};
}
}
\end{tikzpicture}
\caption{Lift of $\gamma_{k-2m+1}^C$ for $k=10$, $m=8$ drawn on the strip.}
\label{fig:progression_kink}
\end{figure}

In the covering via the infinite horizontal strip, we draw the lower boundary $Bd$ so that $i$ is drawn to the left of $j$ in each frame.
We draw each representative of $\gamma_k$ as follows.
We start at a frame $Reg_0$.
Starting from a vertex labeled $i$, our pencil goes north, passing through all of the $(k-1)$ crossings.  When we get to the very north, we turn southeast, and finish at a vertex labeled $j$, which is located in the frame $k-1$ frames (respectively, $k$ frames) east of $Reg_0$ if $i\neq j$ (respectively, if $i=j$). See Fig.~\ref{fig:progression_unresolved}. 

We order the crossings of $\gamma_k$ so that the first crossing is the one closest to $Bd$ and the $(k-1)$-th crossing is the one furthest away from $Bd$.
In each frame,
consider the $m$-th crossing $c$ of $\gamma_k$.
Denote the segments meeting at $c$ by $north_{c}$, $south_{c}$, $east_{c}$, and $west_{c}$, so that $north_c$ is the segment drawn north of $c$, $east_{c}$ is the segment drawn east of $c$, et cetera.

If we resolve all representatives of the $m$-th crossing $c$ by glueing $north_c$ with $west_c$ as well as glueing $south_c$ with $east_c$, we get two curves, $\gamma_{m}$ and $Brac_{k-m}$ (see Fig.~\ref{fig:progression_bracelet}). This explains the first summand of Theorem \ref{thm:progression_formula}. 
The second summand (see Figs. \ref{fig:progression} and \ref{fig:progression_kink}) of Theorem \ref{thm:progression_formula} is explained by the following Lemma.

\begin{lem}\label{lem:kminus2mplus1}
Suppose we have the same setup as above for $\gamma_k$.
Suppose we resolve all representatives of the $m$-th crossing $c$ by glueing $north_c$ with $east_c$ as well as glueing $south_c$ with $west_c$. Then we get one curve, $\gamma_{k-2m+1}^C$. See Figs. \ref{fig:progression} and \ref{fig:progression_kink}.
\end{lem}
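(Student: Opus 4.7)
The strategy is to perform the ``north--east / south--west'' smoothing at every copy of the $m$-th crossing $c$ in the universal cover $\mathcal{U}$ and trace the resulting curve explicitly, matching it with the lift of $\gamma_{k-2m+1}^C$ drawn in Figures~\ref{fig:progression} and~\ref{fig:progression_kink}. The lemma is a purely topological statement about the effect of a local modification, so the proof reduces to a careful bookkeeping argument on the strip.

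First, I would fix the local picture at a copy of $c$: two strands meet there, a ``vertical'' arm belonging to the part of a lift of $\gamma_k$ going straight up from a marked point on $Bd$, and a ``diagonal'' arm belonging to the part of a shifted lift coming back down toward $Bd$. Among the $k-1$ crossings stacked in each frame, the $m$-th level is fixed, and the smoothing of the lemma replaces each copy of $c$ by two corners pairing its north arm with its east arm and its south arm with its west arm. I would then trace the resulting curve. Starting at a marked point on $Bd$ and entering through the south arm of some copy of $c$, the south--west smoothing routes the curve into the diagonal strand descending toward $Bd$; following it downward through the surviving crossings of levels $m-1,\dots,1$ leads to a second marked point on $Bd$. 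Tracing in the opposite direction from the same copy of $c$, the curve exits through the east arm via the north--east smoothing, travels up the vertical strand of the adjacent lift through crossings of levels $m+1,\dots,k-1$, over the top, and back down, meeting the next copy of $c$ one frame further east, where the smoothing repeats. Iterating closes up into a single connected curve.

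Finally, I would verify that the curve so obtained is indeed $\gamma_{k-2m+1}^C$. By tracking how each iterated pair of smoothings shifts the trace by one frame in the direction opposite to $\gamma_k$, one sees that the resulting curve starts at $j$, ends at $i$, and has net translation of $k-2m+1$ frames along $Bd$ with the reversed orientation --- precisely the data of $\gamma_{k-2m+1}^C$, and with the $k-2m$ essential self-crossings of such a complementary arc. When $k-2m+1\le 0$, the iteration ``overshoots'' and the traced curve acquires a contractible kink, as visible in Figure~\ref{fig:progression_kink}, matching the convention $x(\gamma_{-r}^C)=-x(\gamma_{r+1})$ used in Theorem~\ref{thm:progression_formula}. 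The main obstacle is the tracing step itself: one has to verify that the simultaneous smoothings at every copy of $c$ really produce a single connected curve (rather than the two components of the other smoothing), and that this curve picks up the correct orientation relative to $Bd$ so that it represents $\gamma_{k-2m+1}^C$ and not $\gamma_{k-2m+1}$.
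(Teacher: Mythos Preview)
Your strategy---trace the resolved curve in the strip and identify it---is exactly what the paper does, and the checklist you end with (connectedness, orientation, the kink case) is the right one. But your tracing is geometrically wrong relative to the stated drawing convention. Each lift goes north from $i$ and then turns \emph{southeast} toward $j$, so at the $m$-th crossing $c$ the diagonal's west arm points upper-left (back toward where that diagonal originated, $k-m$ frames away) and its east arm points lower-right (toward $j$). Entering via the south arm and taking the south--west corner therefore sends the curve \emph{upper-left}, ascending through crossings at levels $m+1,\dots,k-1$; it does not ``descend toward $Bd$ through levels $m-1,\dots,1$.'' What you describe there is the south--east corner, i.e.\ the \emph{other} smoothing, the one that produces $\gamma_m$ and $\Brac_{k-m}$. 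Similarly, the north--east corner sends the curve lower-right along a diagonal toward $j$, not ``up the vertical strand of the adjacent lift.''

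Because of this reversal, your ``iterating'' picture is also off: the traced curve meets exactly two copies of the $m$-th crossing, not one per frame. The correct trace (and the paper's) is short: from $i$ in $Reg_0$ go north to $c_0$; the south--west corner sends you upper-left along the diagonal all the way to the top of the vertical in $Reg_{-(k-m)}$; descend that vertical to its $m$-th crossing $c'$; the north--east corner there sends you lower-right along a diagonal to $j$, landing $m-1$ frames east (if $i\neq j$), i.e.\ in $Reg_{-(k-2m+1)}$. That is the entire curve. One then splits on whether $-(k-2m+1)\le 0$ (endpoint west of or at $Reg_0$, giving $\gamma_{k-2m+1}^C$ directly) or $-(k-2m+1)>0$ (endpoint east of $Reg_0$, so the last leg recrosses the first leg exactly once, producing a kink and hence $\gamma_{k-2m+1}^C$ by the convention $x(\gamma_{-r}^C)=-x(\gamma_{r+1})$).
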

\begin{proof}[Proof of Lemma \ref{lem:kminus2mplus1}]

Our pencil starts at $Reg_0$ at the starting vertex $i$ and heads north. 
Let $c_0$ denote the $m$-th crossing of $\gamma_k$ at frame $Reg_0$.
When we get to $c_0$, since the segment $south_{c_0}$ is glued to  $west_{c_0}$, we pivot west of $c_0$.
As we trace $west_{c_0}$ with our pencil, we pass through $(k-m)$ other frames west of $Reg_0$, denoted $Reg_{-1}$, $\dots$, $Reg_{-(k-m)}$.
When we get to $Reg_{-(k-m)}$, the curve bends south (because this is how we've chosen to draw $\gamma_k$).
As our pencil traces south, we hit the $m$-th crossing $c'$ in  $Reg_{-(k-m)}$.
Because we have glued $north_{c'}$ with $east_{c'}$, the curve bends east at $c'$.

Since this crossing is the $m$-th crossing closest to $Bd$, 
there are $m-1$ (possibly $m-1=0$) other crossings beneath it closer to $Bd$.
Therefore, our pencil will end (at a representative of the vertex $j$) when we get to the frame that is $m-1$ frames (respectively, $m$ frames) away east of $Reg_{-(k-m)}$ if $i \neq j$ (respectively, if $i=j$).

We consider the two possibilities: $k-m>m-1$ or $k-m \leq m-1$.  
First, assume $k-m>m-1$.
If $i\neq j$, we end at the frame $Reg_{-(k-m)+(m-1)}=Reg_{-(k-2m+1)}$, which we have passed earlier.
Hence we have traced the curve $\gamma_{k-2m+1}^C$ (see Fig.~\ref{fig:progression}).
If $i=j$, we end at the frame $Reg_{-(k-m)+m}=Reg_{-(k-2m)}$, which we have passed earlier.
Since we started at $Reg_0$, and since $i=j$, we have traced the curve $\gamma_{k-2m}$. (In particular, if $k=2m$, then we have traced a curve that is contractible to the point $i$).
By definition, this curve is $\gamma_{k-2m+1}^C$.

If $k-m \leq m-1$, we pass $Reg_0$, crossing our pencil mark exactly once before continuing to another frame east of $Reg_0$. 
If $i \neq j$, we end our drawing at a frame that is $m-1-(k-m)=-k+2m-1$ frames away east of $Reg_0$. (In the case $k-m = m-1$, this quantity is zero and indeed 
we end at the original frame $Reg_0$, and we cross our pencil mark exactly once before ending at $j$.)
Denote this frame $Reg_{(-k+2m-1)}$.  Hence we have traced the curve $\gamma_{(-k+2m)}$ with a kink (note that $(-k +2m \geq 1)$ in this case).
By definition, this curve is $\gamma_{k-2m+1}^C$ (see Fig.~\ref{fig:progression_kink}).
If $i = j$, we end our drawing at a frame that is $m-(k-m)=-k+2m$ frames away east of $Reg_0$. Denote this frame $Reg_{(-k+2m)}$. 
Hence we have traced the curve $\gamma_{(-k+2m)}$ with a kink.
By definition, this curve is $\gamma_{k-2m+1}^C$.
\end{proof}

\section{Bracelets and growth coefficients}
\label{sec:brac_and_growth}

In~\cite[Thm. 2.2]{BFPT16}, the authors show that in an n-periodic infinite frieze of positive integers, the difference between the entries in rows $(nk+1)$ \& $(nk-1)$ and the same column is a constant.
These differences are also constant in our infinite friezes of Laurent polynomials, and we give geometric interpretations to these differences. Following \cite[Def. 2.3]{BFPT16}, we refer to these constants as \emph{growth coefficients}.

\subsection{Growth coefficients}
\label{subsec:growth_coeff}

\def\mylevel{k}
\def\myremainder{j}

We say that \emph{level $\mylevel$} of a frieze consists of the entries of the frieze indexed by $(i,i+(\mylevel-1)n+\myremainder)$ where $\myremainder=1,\dots,n$, that is, the entries in the $(\mylevel-1)n+\myremainder$-th row of the frieze.
Compare the following proposition with \cite[Thm. 2.2]{BFPT16}.

\begin{prop}
\label{prop:see_BFPT16_thm2_2}
Let $\mathcal{F}=\{\mathcal{F}_{i,j}\}$ 
be an infinite periodic frieze pattern as described in Section \ref{sec:proof_frieze_laurent}.
For each $k \geq 1$, we have 
\begin{gather*}
 x(Brac_{k})  = \mathcal{F}_{i,i+1+kn} - \mathcal{F}_{i+1,i+kn}
\end{gather*}
for all $i\in\mathbb{Z}$.
\end{prop}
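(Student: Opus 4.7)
The plan is to recognize this as a direct consequence of the progression formulas established in the previous section, specifically the special case in equation (\ref{eqn:m_1_growth_coefficient}) of Remark \ref{rem:m1}. The main work is just to translate the indexing convention of $\mathcal{F}_{i,j}$ into the $\gamma_k(i,j)$ notation used in Section~\ref{subsec:comp_arcs}.

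First I would unpack the two frieze entries. By Theorem~\ref{thm:Frieze_Laurent}, $\mathcal{F}_{i,j} = x(\gamma(i,j))$. For the first entry, since $i < i+1+kn$ and using the definition of $\gamma_k(i,j)$ with $i < j$, we have
\[
\gamma(i,i+1+kn) = \gamma\bigl(i,(i+1) + kn\bigr) = \gamma_{k+1}(i,i+1),
\]
so $\mathcal{F}_{i,i+1+kn} = x(\gamma_{k+1})$ where $\gamma_1 := \gamma(i,i+1)$ is simply the boundary edge between $i$ and $i+1$.  For the second entry, $\gamma(i+1,i+kn)$ fits the definition of the complementary arc:
\[
\gamma_k(i,i+1)^C = \gamma\bigl((i+1), i+kn\bigr),
\]
so $\mathcal{F}_{i+1,i+kn} = x(\gamma_k^C)$.

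Next I would apply equation (\ref{eqn:m_1_growth_coefficient}) from Remark~\ref{rem:m1}. Since $\gamma_1$ is a boundary edge we have $x(\gamma_1) = 1$, and the $m=1$ specialization of Theorem~\ref{thm:progression_formula} gives
\[
x(\gamma_{k+1}) = x(\gamma_1)\, x(Brac_k) + x(\gamma_k^C) = x(Brac_k) + x(\gamma_k^C).
\]
Rearranging yields $x(Brac_k) = x(\gamma_{k+1}) - x(\gamma_k^C) = \mathcal{F}_{i,i+1+kn} - \mathcal{F}_{i+1,i+kn}$, as desired.

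There is no real obstacle here; the heavy lifting was already done in Section~\ref{sec:proof_progression_formulas}. The only care needed is to verify that the indexing conventions line up (checking both $i<j$ vs.\ $i\geq j$ cases in the definition of $\gamma_k(i,j)$ and $\gamma_k(i,j)^C$) and to observe that $\gamma_1$ being a boundary edge makes the leading coefficient $x(\gamma_1)$ equal to $1$, so that $x(Brac_k)$ is isolated cleanly on one side. The statement holds for every $i \in \mathbb Z$ because the same argument applies verbatim with $\gamma_1 = \gamma(i,i+1)$ chosen starting at any marked point on the boundary component.
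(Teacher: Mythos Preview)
Your proposal is correct and follows essentially the same approach as the paper's proof: identify $\mathcal{F}_{i,i+1+kn}$ with $x(\gamma_{k+1})$ and $\mathcal{F}_{i+1,i+kn}$ with $x(\gamma_k^C)$ for the boundary edge $\gamma_1 = \gamma(i,i+1)$, then invoke equation~(\ref{eqn:m_1_growth_coefficient}). The paper's write-up is slightly terser but the logic is identical.
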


\begin{proof} 
Let $Bd$ be the boundary corresponding to $\mathcal{F}$, with
$n$ marked points. 
Let $i\in \mathbb{Z}$ and
suppose that $\ga_1$ is the boundary edge from $i$ to $i+1$ (taken modulo $n$).
Since $x(\ga_1)=1$, 
due to (\ref{eqn:m_1_growth_coefficient}), we have
\[ x(Brac_{k}) = x(\gamma_{k+1}) - x(\gamma_{k}^C).\]
Observe that 
$\ga_{k+1}$ 
corresponds to the entry at position $(i,i+1+kn)$ and $\ga_{k}^C$ corresponds to the entry at position $(i+1,i+kn)$. 
Hence \[ x(\gamma_{k+1}) - x(\gamma_{k}^C) 
 = \mathcal{F}_{i,i+1+kn} - \mathcal{F}_{i+1,i+kn}.\]
\end{proof}

\begin{defn}  \label{defn:growthcoefficient} 
Let $\mathcal{F}=\{\mathcal{F}_{i,j}\}$
be an infinite periodic frieze pattern as described in Section \ref{sec:proof_frieze_laurent}. Let $n$ be the number of marked points on the outer boundary of the associated triangulated surface. 
For $k \geq 0$, the \emph{$k$th growth coefficient} for $\mathcal{F}$
is given by $s_0:=2$, and $s_k:= \mathcal{F}_{i,i+1+kn} - \mathcal{F}_{i+1,i+kn}$, otherwise. 
\end{defn}

Note that $s_k$ measures the difference between entries in the first row of the $(k+1)$st level and the penultimate row of the $k$th level. 

\begin{rem}\label{rem:sk_is_brac}
Per Proposition \ref{prop:see_BFPT16_thm2_2}, $s_k=x(\Brac_k)$ whenever $k\geq 1$, so we can use the two terms interchangeably.
\end{rem}

To see that $s_0 =2$ makes sense in the frieze, see Fig.~\ref{fig:growth_frieze}. We write in the row of 0s and then add a row of -1s above the row of 0s. Then $s_0 = 1 - (-1) =2$.

\begin{figure}[h!]
\begin{center}
\begin{tikzpicture}[scale = .79]
 \matrix(m) [matrix of math nodes,row sep={1.4em,between origins},column sep={1.35em,between origins},nodes in empty cells]{
 -1&&-1&&-1&&-1&&-1&&-1&&&&&&&&&&&\\
&\mathbf{0}&&\mathbf{0}&&\mathbf{0}&&\mathbf{0}&&\mathbf{0}&&\mathbf{0}&&&&&&&&&&&\\
&&1&&1&&1&&1&&1&&1&&&&&&&&&&\\
&&&1&&2&&6&&1&&2&&6&&&&&&&&&\\
&&&&\mathbf{1}&&\mathbf{11}&&\mathbf{5}&&\mathbf{1}&&\mathbf{11}&&\mathbf{5}&&&&&&&&\\
&&&&&5&&9&&4&&5&&9&&4&&&&&&&\\
&&&&&&4&&7&&19&&4&&7&&19&&&&&&\\
&&&&&&&\mathbf{3}&&\mathbf{33}&&\mathbf{15}&&\mathbf{3}&&\mathbf{33}&&\mathbf{15}&&&&&\\
&&&&&&&&14&&26&&11&&14&&26&&11&&&&\\
&&&&&&&&&11&&19&&51&&11&&19&&51&&&\\
&&&&&&&&&&\mathbf{8}&&\mathbf{88}&&\mathbf{40}&&\mathbf{8}&&\mathbf{88}&&\mathbf{40}&&\\
&&&&&&&&&&&37&&69&&29&&37&&69&&29&\\
&&&&&&&&&&&&29&&50&&134&&29&&50&&134\\
&&&&&&&&&&&&&&&\node[rotate=-6.5,shift={(-0.034cm,-0.08cm)}]  {\ddots};&&&&&&\node[rotate=-6.5,shift={(-0.034cm,-0.08cm)}]  {\ddots};\\
};

\draw[opacity=0,rounded corners,fill=myblue,fill opacity=0.15] (m-1-1.south west) -- (m-13-13.south west) -- (m-13-19.south west) -- (m-1-7.south west) -- cycle;

\draw ($(m-3-3)+(0,0.6125cm)$) node[ellipse, minimum height=2cm,minimum width=0.65cm,draw,semithick,opacity=0.5] {};
\draw ($(m-6-6)+(0,0.6125cm)$) node[ellipse, minimum height=2cm,minimum width=0.65cm,draw,semithick,opacity=0.5] {};
\draw ($(m-9-9)+(0,0.6125cm)$) node[ellipse, minimum height=2cm,minimum width=0.65cm,draw,semithick,opacity=0.5] {};
\draw ($(m-12-12)+(0,0.6125cm)$) node[ellipse, minimum height=2cm,minimum width=0.65cm,draw,semithick,opacity=0.5] {};

\draw (m-2-1) node[shift={(-1.5cm,0cm)}]{$s_0=2$};
\draw (m-5-4) node[shift={(-1.5cm,0cm)}]{$s_1=3$};
\draw (m-8-7) node[shift={(-1.5cm,0cm)}]{$s_2=7$};
\draw (m-11-10) node[shift={(-1.5cm,0cm)}]{$s_3=18$};


\end{tikzpicture}
\end{center}
\caption{Growth coefficients in the frieze with quiddity sequence $(1,2,6)$.}
\label{fig:growth_frieze}
\end{figure}
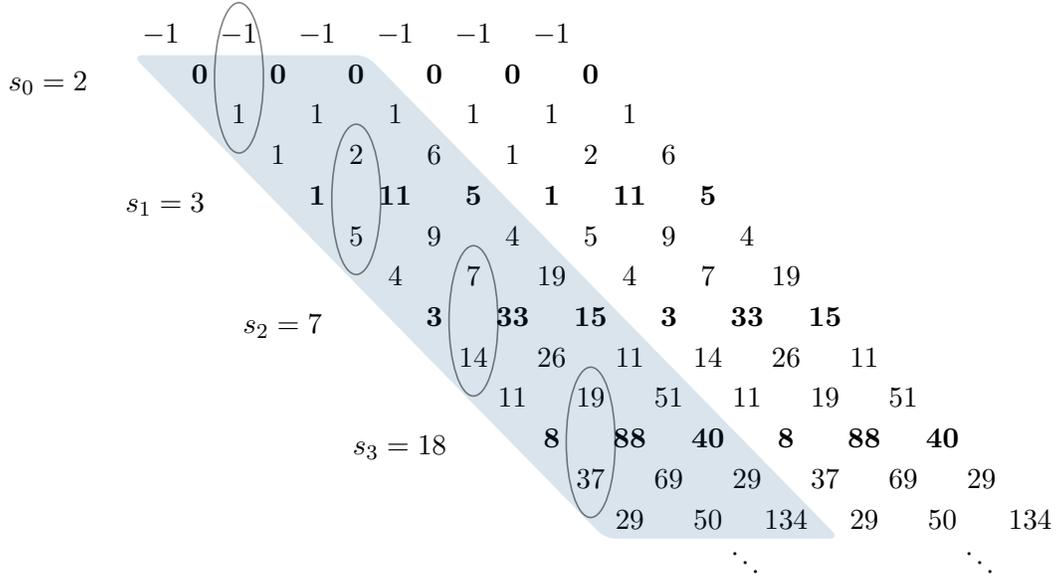

Given a triangulation of an annulus, we get two different quiddity sequences $q$ and $\overline{q}$ from the outer and inner boundaries, respectively. 
We see that $s_q = s_{\overline{q}}$ since $\Brac_k$ is defined independently of the choice of the boundary of an annulus.
This agrees with \cite[Thm. 3.4]{BFPT16}. 

\begin{rem}
\label{rem:compute_entires_with_progression_formulas}
The progression formulas (Theorem \ref{thm:progression_formula}) give us a way to compute entries on lower levels using the growth coefficients and entries on previous levels. Define $m(\gamma)$ to be the integer obtained from $x(\gamma)$ by specializing all the $x_{\tau_i}$ to $1$.
We demonstrate (\ref{eqn:m_1}) on the
frieze pattern of positive integers in Fig.~\ref{fig:reflection_symmetry}.
Consider the underlined entry 5 (in the dotted circle) on the 
 first column, which corresponds to the $\gamma_2$ for some boundary edge $\gamma_1$. This 5 is equal to $s_1  \, m(\gamma_1) + m(\gamma_1^C) = 3(1) + 2$. Similarly, we compute the underlined entry $19 =: m(\gamma_3)$ which is equal to $s_2 \, m(\gamma_1)+m(\gamma_2^C) = 7(2)+5$. We can do this for every entry in a frieze pattern.
\end{rem}

\begin{figure}[h]
\begin{tikzpicture}[scale = .79]
 \matrix(m) [matrix of math nodes,row sep={1.3em,between origins},column sep={1.25em,between origins},nodes in empty cells]{
 &&&&&&&&&&&&&&&&&&&&&\\
&\mathbf{0}&&\mathbf{0}&&\mathbf{0}&&\mathbf{0}&&\mathbf{0}&&\mathbf{0}&&\mathbf{0}&&\mathbf{0}&&&&&&&\\
&&1&&1&&1&&1&&1&&1&&1&&1&&&&&&\\
&&&1&&2&&6&&1&&2&&6&&1&&2&&&&&\\
&&&&\mathbf{1}&&\mathbf{11}&&\mathbf{5}&&\mathbf{1}&&\mathbf{11}&&\mathbf{5}&&\mathbf{1}&&\mathbf{11}&&&&\\
&&&&&\underline{5}&&9&&4&&5&&9&&4&&5&&9&&&\\
&&&&&&4&&7&&19&&4&&7&&19&&4&&7&&&&\\
&&&&&&&\mathbf{3}&&\mathbf{33}&&\mathbf{15}&&\mathbf{3}&&\mathbf{33}&&\mathbf{15}&&\mathbf{3}&&\mathbf{33}&&&&\\
&&&&&&&&14&&26&&11&&14&&26&&11&&14&&26&&&&\\
&&&&&&&&&11&&19&&51&&11&&\underline{19}&&51&&11&&19&&\\
&&&&&&&&&&\mathbf{8}&&\mathbf{88}&&\mathbf{40}&&\mathbf{8}&&\mathbf{88}&&\mathbf{40}&&\mathbf{8}&&\mathbf{88}&&&\\
&&&&&&&&&&&37&&69&&29&&37&&69&&29&&37&&&\\
&&&&&&&&&&&&29&&50&&134&&29&&50&&134&&29&&\\
&&&&&&&&&&&&&&&\node[rotate=-6.5,shift={(-0.034cm,-0.08cm)}]  {\ddots};&&&&&&\node[rotate=-6.5,shift={(-0.034cm,-0.08cm)}]  {\ddots};\\
};


\draw ($(m-6-6)+(0,0.6125cm)$) node[ellipse, minimum height=2cm,minimum width=0.7cm,draw,semithick,opacity=0.5] {};
\draw[densely dotted, fill=mygreen,fill opacity=0.25] (m-6-6) circle (0.32cm);
\draw[densely dotted, fill=mygreen,fill opacity=0.25] (m-4-6) circle (0.32cm);
\draw[densely dotted, fill=mygreen,fill opacity=0.25] (m-3-3) circle (0.32cm);

\draw ($(m-9-18)+(0,0.8125cm)$) node[ellipse, minimum height=3.5cm,minimum width=0.75cm,draw,semithick,opacity=0.5] {};
\draw[dashed, fill=myred,fill opacity=0.07] (m-6-18) circle (0.31cm);
\draw[dashed,fill=myred,fill opacity=0.07] (m-10-18) circle (0.31cm);
\draw[dashed,fill=myred,fill opacity=0.07] (m-4-12) circle (0.31cm);

\draw (m-2-1) node[shift={(-1.5cm,0cm)}]{$s_0=2$};
\draw (m-5-4) node[shift={(-1.5cm,0cm)}]{$s_1=3$};
\draw[densely dotted, fill=mygreen, fill opacity=0.25] (m-5-2)+(-0.0cm,0) circle (0.34cm);
\draw (m-8-7) node[shift={(-1.5cm,0cm)}]{$s_2=7$};
\draw[dashed, fill=myred, fill opacity=0.07] (m-8-5)+(-0.0cm,0) circle (0.34cm);
\draw (m-11-10) node[shift={(-1.5cm,0cm)}]{$s_3=18$};
\end{tikzpicture}
\caption{Computing entries in an infinite frieze pattern with quiddity sequence $(1,2,6)$.}
\label{fig:reflection_symmetry}
\end{figure}
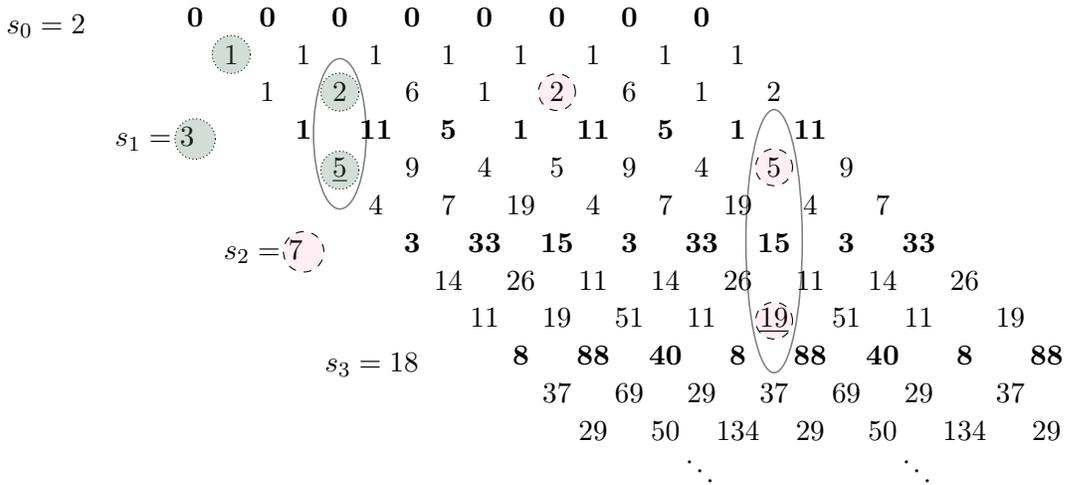

\subsection{Chebyshev polynomials}
\label{subsec:chebyshev}
We now recall some basic facts about Chebyshev polynomials~\cite[Sec. 2.5]{MSW13}.

\begin{defn}[{\cite[Def. 2.33]{MSW13}}]
\label{defn:normalized_chebyshev}
Let $T_k$ denote the $k$-th normalized Chebyshev polynomial defined by
\[
T_k\left(t+\frac{1}{t}\right) = t^k + \frac{1}{t^k} 
\]
\end{defn}

\begin{prop}[{\cite[Prop. 2.34]{MSW13}}]
\label{prop:MSW_chebyshev_recurrence}
The normalized Chebyshev polynomials $T_k(x)$ defined above can also be uniquely determined by the initial conditions $T_0(x) = 2$, $T_1(x) = x$, and the recurrence
\[
T_k(x) = x T_{k-1}(x) - T_{k-2}(x).
\]
Note that $T_k(x)$'s can also be written as $2\Cheb_k(x/2)$, where $\Cheb_k(x)$ denotes the usual Chebyshev polynomial of the first kind, which satisfies $\Cheb_k(\cos x)=\cos (kx)$.
\end{prop}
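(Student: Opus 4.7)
The plan is to verify the initial conditions and the recurrence directly from the defining identity $T_k(t + 1/t) = t^k + t^{-k}$ of Definition~\ref{defn:normalized_chebyshev}, and then appeal to the standard fact that a linear recurrence with prescribed initial data has a unique solution.

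First, I would establish the initial conditions by plugging in $k=0$ and $k=1$. Setting $k=0$ gives $T_0(t + 1/t) = t^0 + t^{-0} = 2$, and $k=1$ gives $T_1(t+1/t) = t + t^{-1}$, so as polynomials in $x$ we have $T_0(x) = 2$ and $T_1(x) = x$.

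Second, for the recurrence, I would compute the product
\[
(t + t^{-1})\,(t^{k-1} + t^{-(k-1)}) \;=\; t^k + t^{k-2} + t^{-(k-2)} + t^{-k} \;=\; (t^k + t^{-k}) + (t^{k-2} + t^{-(k-2)}),
\]
which rewrites, via the defining identity, as $x\,T_{k-1}(x) = T_k(x) + T_{k-2}(x)$ whenever $x = t + t^{-1}$. Rearranging yields $T_k(x) = x\,T_{k-1}(x) - T_{k-2}(x)$ for every $x$ lying in the image of the map $t \mapsto t + t^{-1}$. Since that image is an infinite subset of $\mathbb{C}$ (indeed, it is all of $\mathbb{C}$), and two polynomials agreeing on an infinite set are equal, this is a genuine polynomial identity in $x$.

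Third, I would note uniqueness: a sequence of polynomials satisfying the two-term linear recurrence $P_k = x P_{k-1} - P_{k-2}$ is completely determined by $P_0$ and $P_1$, by an immediate induction on $k$. Hence the sequence $(T_k)$ is the unique one with $T_0 = 2$, $T_1 = x$, and this recurrence. The only subtlety is the polynomial-identity step above, which is routine; there is really no main obstacle here beyond being careful about treating the substitution $x = t + t^{-1}$ as giving an identity of polynomials rather than merely a conditional equality.
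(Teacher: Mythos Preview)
Your proof is correct. The paper does not actually prove this proposition: it is quoted directly from \cite[Prop.~2.34]{MSW13} and stated without argument, so there is nothing in the paper to compare against. Your verification of the initial conditions and the recurrence via the identity $(t+t^{-1})(t^{k-1}+t^{-(k-1)}) = (t^k+t^{-k}) + (t^{k-2}+t^{-(k-2)})$ is the standard one, and the uniqueness claim is immediate from the recurrence being second-order. The only part of the statement you did not address is the final remark that $T_k(x) = 2\,\Cheb_k(x/2)$; this follows at once by checking that $2\,\Cheb_k(x/2)$ satisfies the same recurrence and initial conditions (using $\Cheb_0=1$, $\Cheb_1(x)=x$, and $\Cheb_k(x) = 2x\,\Cheb_{k-1}(x) - \Cheb_{k-2}(x)$), so it is a one-line addendum.
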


Table \ref{tabu:Cheb_poly} shows the first few normalized Chebyshev polynomials.  
The elements associated to the bracelets (Definitions~\ref{defn:bracelets} and \ref{defn:bracelet_laurent_polynomial}) satisfy the normalized Chebyshev polynomials. 

\begin{table}
\begin{center}
$\begin{tabular}{l c l}
  $T_0(x)$ &$=$ &$2$  \\
  $T_1(x)$ &$=$ &$x$ \\
  $T_2(x)$ &$=$& $x^2 -2$ \\
  $T_3(x)$ &$=$ &$x^3 - 3x$  \\
  $T_4(x)$ &$=$ &$x^4 - 4x^2 + 2$  \\
  $T_5(x)$ &$=$ &$x^5 - 5x^3 + 5x^2$  \\
  $T_6(x)$ &$=$ &$x^6 - 6x^4 + 9x^2 - 2$  \\
\end{tabular}$
\end{center}
\caption{The normalized Chebyshev polynomials $T_k(x)$ for small $k$.}
\label{tabu:Cheb_poly}
\end{table}

\begin{prop}[{\cite[Prop. 4.2]{MSW13}}]
\label{prop:MSW13_prop42}
We have \[x_{\Brac_k} = T_k(x_{\Brac_1}).\]
\end{prop}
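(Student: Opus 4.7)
The plan is to prove $x_{\Brac_k}=T_k(x_{\Brac_1})$ by induction on $k$, matching the Chebyshev recurrence $T_k(x)=x\,T_{k-1}(x)-T_{k-2}(x)$ from Proposition~\ref{prop:MSW_chebyshev_recurrence}. The base case $k=1$ is tautological. For $k=2$, I apply Theorem~\ref{thm:MW_skein_relations} at the unique self-crossing of $\Brac_2$: one smoothing separates $\Brac_2$ into two disjoint copies of the core loop $\zeta=\Brac_1$, contributing $x_{\Brac_1}^{\,2}$; the other smoothing produces a contractible loop, contributing $-2$ by Definition~\ref{defn:bracelet_laurent_polynomial}(1). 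Hence $x_{\Brac_2}=x_{\Brac_1}^{\,2}-2=T_2(x_{\Brac_1})$.

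For the inductive step ($k\ge 3$), I assume $x_{\Brac_j}=T_j(x_{\Brac_1})$ for all $1\le j<k$ and reduce the proposition to establishing the skein-theoretic identity
\[
x_{\Brac_k}\;=\;x_{\Brac_1}\cdot x_{\Brac_{k-1}} \,-\, x_{\Brac_{k-2}}.
\]
Combined with the induction hypothesis and the Chebyshev recurrence, this immediately yields $x_{\Brac_k}=T_k(x_{\Brac_1})$.

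To prove the skein identity geometrically, recall (Definition~\ref{defn:bracelets}) that the bracelet $\Brac_k$ is a single closed curve carrying exactly $k-1$ self-crossings. I would choose one self-crossing $c$ whose two local strands separate an ``innermost'' single traversal of $\zeta$ from the ``outer'' $(k-1)$-fold spiral. Applying Theorem~\ref{thm:MW_skein_relations} at $c$ gives $x_{\Brac_k}=x_{C_+}+x_{C_-}$, where $C_\pm$ are the two smoothings. The geometric claim is that $C_+$ splits $\Brac_k$ into two disjoint curves, namely a copy of $\zeta=\Brac_1$ and a curve winding $k-1$ times with $k-2$ self-crossings (i.e., $\Brac_{k-1}$), so $x_{C_+}=x_{\Brac_1}\cdot x_{\Brac_{k-1}}$; while $C_-$ is a single closed curve winding $k-2$ times together with an extra contractible kink (the crossing count matches: $(k-3)+1=k-2$), so by Definition~\ref{defn:bracelet_laurent_polynomial}(3) it contributes $x_{C_-}=-x_{\Brac_{k-2}}$.

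The main obstacle is the local-to-global topological verification of the two smoothings at $c$: one must track the strands globally across the spiral structure of $\Brac_k$ to confirm that $C_+$ genuinely separates into the disjoint union $\Brac_1\sqcup\Brac_{k-1}$, and that $C_-$ is isotopic to $\Brac_{k-2}$ carrying exactly one contractible kink (so that Definition~\ref{defn:bracelet_laurent_polynomial}(3) produces the required minus sign). Choosing the right self-crossing is essential; an arbitrary choice could instead split $\Brac_k$ into, say, $\Brac_j\sqcup\Brac_{k-j}$ for some other $j$, or introduce additional kinks. Once the correct $c$ is identified and the two resolutions are verified, the Chebyshev recurrence at the level of cluster algebra elements follows, completing the induction.
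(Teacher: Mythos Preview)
The paper does not supply its own proof of this proposition; it simply cites \cite[Prop.~4.2]{MSW13}. Your skein-relation induction is the standard argument and is essentially how the cited reference establishes the result: resolve a well-chosen self-crossing of $\Brac_k$, identify the two smoothings as $\Brac_1\sqcup\Brac_{k-1}$ and $\Brac_{k-2}$ carrying one contractible kink, and invoke the Chebyshev recurrence of Proposition~\ref{prop:MSW_chebyshev_recurrence}.

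Your sketch is correct and the ``obstacle'' you flag is routine once you fix a concrete spiral drawing of $\Brac_k$ (as in Fig.~\ref{fig:bracelets}) and take $c$ to be the crossing between the innermost and second-innermost windings. In that drawing one sees directly that the smoothing $C_+$ detaches a single copy of $\Brac_1$ from a remaining $(k-1)$-fold spiral isotopic to $\Brac_{k-1}$, while $C_-$ yields a $(k-2)$-fold spiral together with one small contractible kink, so Definition~\ref{defn:bracelet_laurent_polynomial}(3) supplies the minus sign. The same picture handles $k=2$ (where $C_-$ is the contractible loop, giving $-2$). With that verification in hand your induction is complete.
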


The recurrence implied by Propositions \ref{prop:MSW_chebyshev_recurrence} and \ref{prop:MSW13_prop42} 
agrees with that of growth coefficients, $s_{k+2}=s_1s_{k+1}-s_k$ for $k\ge 0$, from
{\cite[Prop. 2.10]{BFPT16}}.

\begin{rem}
\label{rem:brac_equals_2}
Suppose $\Brac_1$ is a closed loop without self-crossings enclosing a single puncture.
Since $x_{\Brac_1}=2$ (per Definition~\ref{defn:bracelet_laurent_polynomial}(\ref{defn:bracelet_laurent_polynomial:item:brac1_equals_2})),
 $T_k(2)=2$ for all $k\geq 0$ (per Proposition \ref{prop:MSW_chebyshev_recurrence}), and
$x_{\Brac_k} = T_k(x_{\Brac_1})$ (per Proposition \ref{prop:MSW13_prop42}), we have 
\begin{gather*}
x_{\Brac_k} = 2 \text{ for all $k\geq 0$}.
\end{gather*}
In particular, if $(S,M)$ is a once-punctured \disk, all bracelets are associated to $2$, thought of as a scalar in the cluster algebra 
$\mathcal{A} = \mathcal{A}(S,M)$.

\end{rem}

\section{Recursive relationships}
\label{sec:recursive}

\subsection{Differences from complement symmetry}
\label{subsec:diff_comp}

We consider the difference between frieze entries associated to complementary arcs of two marked points. For the once-punctured {\disk}, this difference is constant across all levels and is determined by the two marked points. For the annulus, this difference is determined by the level as well as the end points. Recall from Definition \ref{defn:growthcoefficient}, that $s_k := {\mathcal{F}}_{i,i+1+kn} - {\mathcal{F}}_{i+1, i+kn}$ is the $k$th growth coefficient of a frieze $\mathcal{F}=\{{\mathcal{F}}_{i,j}\}_{i\leq j}$.

\begin{prop}\label{prop:complementary_diff}
Let $\mathcal F$ be a frieze pattern coming from a triangulation of a once-punctured {\disk} or annulus.
Let $\gamma_1=\gamma$ be an ordinary arc from $i$ to $j$ (possibly $i=j$) or a boundary edge from $i$ to $i+1$ (i.e. $\gamma$ is the generalized peripheral arc $ \gamma_1(i,j)$ as defined in Sec. \ref{subsec:comp_arcs}). 
Define $c_{k,\gamma} :=  x\left(\gamma_k\right) - x\left(\gamma^C_k \right)$. 
We write $c_k:=c_{k,\gamma}$, since $\gamma$ is understood. Then we have the following relations for $k > 1$:

\smallskip

\begin{inparaenum}[$(1)$]
\item\label{prop:relationvaluesA1}$c_k = (s_{k-1}-s_{k-2}) c_1 + c_{k-2}$, \quad where we define $c_0 = c_1$;

\item 
\label{prop:relationvaluesA2}
\begin{gather*}
c_k = 
\begin{cases}
\displaystyle{
c_1 \left( 1 +  \sum_{i=0}^{k-1}(-1)^{i+1}s_i  \right)}
\text{ for $k$ even, and}\\
\displaystyle{
c_1 \left( 1 +  \sum_{i=1}^{k-1}(-1)^{i}s_i  \right)}
\text{ for $k$ odd,}
\end{cases}
\end{gather*}

\end{inparaenum}

where $c_1 =  x(\gamma_1) - x(\gamma^C_1) $ is computed from the triangulation or the frieze.
\end{prop}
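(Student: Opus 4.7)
The plan is to derive both parts from two applications of the $m = 1$ case of the progression formula (equation~\eqref{eqn:m_1} in Remark~\ref{rem:m1}), combined with the identification $s_{k-1} = x(\Brac_{k-1})$ from Remark~\ref{rem:sk_is_brac}. Applying equation~\eqref{eqn:m_1} to $\gamma$ gives, for $k \geq 2$,
\begin{equation*}
x(\gamma_k) \;=\; s_{k-1}\, x(\gamma_1) \,+\, x(\gamma_{k-1}^C).
\end{equation*}
When $i \neq j$ the complementary arc $\gamma^C$ is itself a peripheral arc from $j$ to $i$, complementation is involutive, and its $k$-th iterate equals $\gamma_k^C$; the analogous application of equation~\eqref{eqn:m_1} to $\gamma^C$ thus yields
\begin{equation*}
x(\gamma_k^C) \;=\; s_{k-1}\, x(\gamma_1^C) \,+\, x(\gamma_{k-1}).
\end{equation*}
The degenerate case $i = j$ requires a separate check in which one uses $\gamma_k^C = \gamma_{k-1}$ together with $x(\gamma_1^C) = 0$ (the latter because $\gamma_1^C$ is a contractible monogon, by Definition~\ref{defn:generalized_arc_laurent_polynomial}) to confirm the same identity. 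Subtracting the two displayed equations then produces the one-step recurrence
\begin{equation*}
c_k \;=\; s_{k-1}\, c_1 \,-\, c_{k-1}, \qquad k \geq 2. \tag{$\star$}
\end{equation*}

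For part~(\ref{prop:relationvaluesA1}), I would substitute the instance $c_{k-1} = s_{k-2}\, c_1 - c_{k-2}$ of $(\star)$ back into $(\star)$ to obtain $c_k = (s_{k-1} - s_{k-2})\, c_1 + c_{k-2}$, valid for $k \geq 3$. The base case $k = 2$ reads $c_2 = (s_1 - 1)\, c_1$, which matches the stated recurrence exactly once we set $s_0 = 2$ and $c_0 = c_1$ as declared.

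For part~(\ref{prop:relationvaluesA2}), I would iterate $(\star)$ telescopically until the recursion bottoms out. For $k$ even the iteration terminates at $c_0 = c_1$, contributing an extra $-s_0 = -2$ summand, while for $k$ odd it terminates at $c_1$ without invoking $s_0$. Grouping signs by parity and factoring out $c_1$ then yields the two stated closed forms. The main obstacle I anticipate is the uniform handling of the degenerate case $i = j$, where $\gamma^C$ is not a genuine peripheral arc; there one must consistently invoke the conventions $x(\text{contractible arc}) = 0$ and $x(\gamma_{-r}^C) = -x(\gamma_{r+1})$ from Theorem~\ref{thm:progression_formula} in order to recover $(\star)$ in the same form. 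Once this is in place, the remainder of the argument is elementary telescoping.
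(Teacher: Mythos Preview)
Your proposal is correct and follows essentially the same approach as the paper: apply the $m=1$ progression formula~\eqref{eqn:m_1} to both $\gamma$ and $\gamma^C$, handle the degenerate case $i=j$ via the conventions $\gamma_k^C=\gamma_{k-1}$ and $x(\gamma_1^C)=0$, and subtract. The only organizational difference is that you first isolate the one-step recurrence $c_k = s_{k-1}c_1 - c_{k-1}$ and then telescope it, whereas the paper expands one step further before subtracting (yielding part~(\ref{prop:relationvaluesA1}) directly) and then proves part~(\ref{prop:relationvaluesA2}) by induction on part~(\ref{prop:relationvaluesA1}); your route is marginally more streamlined but not substantively different.
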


Note that in the case of the once-punctured \disk, since $s_k = 2$ for all $k$, the formula reduces to $c_k = c_1$ for all $k > 1$. 
Note also that, if $i=j$, then $c_1=x(\gamma_1)$.

\begin{proof}
First, we show that $c_2=(s_1-s_0) c_1 + c_0$.
Per (\ref{eqn:m_1}), we have $\gamma_2 = s_{1} \, x(\gamma_1) + x(\gamma^C_{1})$.
We also have $\gamma^C_2 = s_{1} \, x(\gamma^C_1) + x(\gamma_{1})$ due to (\ref{eqn:m_1}) if $i\neq j$ and due to the fact that $\gamma^C_2=\gamma_1$ and $x(\gamma^C_1)=0$ if $i=j$.
Subtracting the two equations gives us
\begin{align*}
c_2 = x(\gamma_2) - x(\gamma^C_2) &= s_1 \left(x(\gamma_1) - x(\gamma^C_1) \right) - \left( x(\gamma_1) - x(\gamma^C_1)\right) \\
& = s_1 c_1 - c_1 \\
& = (s_1 - 2) c_1 + c_1\\
& =(s_1 - s_0) c_1 + c_0,
\end{align*}
where the last equality is due to the fact that $s_0=2$ and $c_0=c_1$.

If $k\geq 3$, per (\ref{eqn:m_1}), we have
$\gamma_k = s_{k-1} \, x(\gamma_1) + x(\gamma^C_{k-1})$ and $x(\gamma^C_{k-1}) = s_{k-2} \, x(\gamma^C_1) + x(\gamma_{k-2})$. Note that the second equation holds even for the case where $i=j$, due to the fact that $\gamma^C_{k-1}=\gamma_{k-2}$ and $x(\gamma^C_1)=0$ if $i=j$. 
So $x(\gamma_k) = s_{k-1} \, x(\gamma_1) + s_{k-2}\, x(\gamma^C_1) + x(\gamma_{k-2})$. We get a similar equation $x(\gamma^C_k) = s_{k-1} \, x(\gamma^C_1) + s_{k-2} \, x(\gamma_1) + x(\gamma^C_{k-2})$. Subtracting the two gives us (\ref{prop:relationvaluesA1}). 

Part (\ref{prop:relationvaluesA2}) is proved by induction. For $k =1$, we get that $c_1 = c_1$, and for $k =2$, $c_2 =  (s_1 -s_0) \, c_1 + c_0 = (s_1 -s_0) \, c_1 + c_1= c_1 \, \left( (-1)^2s_1 + (-1)^1s_0 + 1 \right) = c_1 \left( \sum_{i=0}^{1}(-1)^{i+1}s_i + 1 \right)$ by (\ref{prop:relationvaluesA1}). Assume (\ref{prop:relationvaluesA2}) holds for some $n$. If $n$ is even, then $n+1$ is odd, and 
\begin{align*}
c_{n+1} &= (s_{n} - s_{n-1}) c_1 + c_{n-1} \text{ by (\ref{prop:relationvaluesA1})}\\
&= c_1 \left( (-1)^{n} \, s_{n} + (-1)^{n-1} s_{n-1} \right) + c_1 \left( \sum_{i=1}^{n-2}(-1)^{i}s_i + 1 \right) \\
&=  c_1 \left(1 +  \sum_{i=1}^{n}(-1)^{i}s_i  \right).
\end{align*} 

Similarly, if $n$ is odd, then $n+1$ is even, and we have 
\begin{align*}
c_{n+1} &= (s_{n} - s_{n-1})c_1 + c_{n-1} \text{ by (\ref{prop:relationvaluesA1})}\\
&= c_1 \left( (-1)^{n+1}s_{n} + (-1)^{n} s_{n-1} \right) + c_1 \left( \sum_{i=0}^{n-2}(-1)^{i+1}s_i + 1 \right)\\ 
&=  c_1 \left( \sum_{i=0}^{n}(-1)^{i+1}s_i + 1 \right).
\end{align*}
Thus the result holds true for all $n$. 
\end{proof}

\subsection{Arithmetic progressions}
\label{subsec:arithmetic_progressions}

Friezes coming from triangulations of once-punctured {\disks} satisfy a beautiful arithmetic property. Consider the frieze ${\mathcal F}_T$ in Fig.~\ref{fig:arithmeticIntegerFrieze},
where $T$ is a triangulation of $D_5$; when jumping $5$ steps along any diagonal in ${\mathcal F}_T$, we get a sequence of numbers that with a \emph{common difference}. Thus these sequences of numbers form an increasing arithmetic progression (see Def. 3.10 of \cite{Tsc15}). We call such friezes \emph{n-arithmetic} when the common difference jumps every $n$ steps.  The 
dotted
and 
dashed
circles in Fig.~\ref{fig:arithmeticIntegerFrieze} show examples of two 5-arithmetic progressions in the frieze.

\begin{prop}[{\cite[Prop. 3.11]{Tsc15}}]
\label{thm:arithmetic_Tsc15}
Every $n$-periodic infinite frieze $\F_T$ associated to a triangulation $T$ of $D_n$ is $n$-arithmetic.
\end{prop}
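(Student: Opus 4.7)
The plan is to derive the arithmetic progression property as a direct specialization of the three-term recurrence given by the progression formula (Theorem~\ref{thm:progression_formula}). The key observation is that in the once-punctured disk $D_n$, the generator $\Brac_1$ is a closed loop enclosing the single puncture, so by Remark~\ref{rem:brac_equals_2} we have $x(\Brac_k) = 2$ in $\mathcal{A}(B_T)$ for every $k \geq 1$. Substituting $x(\Brac_1) = 2$ into equation~(\ref{eqn:m_k_min_1}) yields the three-term recurrence
\[ x(\gamma_k) \;=\; 2\, x(\gamma_{k-1}) - x(\gamma_{k-2}), \qquad k \geq 2, \]
for any fixed initial peripheral arc $\gamma_1$ on the boundary of $D_n$. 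Rewriting this as $x(\gamma_k) - x(\gamma_{k-1}) = x(\gamma_{k-1}) - x(\gamma_{k-2})$ shows that the sequence of Laurent polynomials $\{x(\gamma_k)\}_{k \geq 1}$ is an arithmetic progression in the cluster algebra.

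Next I would descend from the Laurent polynomial frieze of Theorem~\ref{thm:Frieze_Laurent} to the integer frieze $\F_T$ by specializing $x_\tau = 1$ for every ordinary arc $\tau$ of $T$. Under this specialization, $x(\gamma_k)$ becomes the number of perfect matchings of the snake graph $G_{T,\gamma_k}$, which by the BCI/matching correspondence recalled in Section~\ref{sec:background_infinite_frieze} coincides with the integer entry of $\F_T$ associated to $\gamma_k$. The recurrence above then specializes to an identical recurrence on positive integers, so the integer sequence $\{m(\gamma_k)\}_{k \geq 1}$ is itself an arithmetic progression with common difference $m(\gamma_2) - m(\gamma_1)$.

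Finally, to finish I would match these arcs to a diagonal of $\F_T$. By the convention $\gamma_k(i,j) = \gamma(i, j + (k-1)n)$ from Section~\ref{subsec:comp_arcs}, translating by $n$ positions along an SE-diagonal of $\F_T$ sends the entry for $\gamma_k(i,j)$ to the entry for $\gamma_{k+1}(i,j)$, and an analogous statement holds for SW-diagonals (with appropriate relabeling of endpoints modulo $n$). Thus the arithmetic progression on matching numbers manifests itself along any diagonal, picking entries every $n$ steps, which is exactly the $n$-arithmetic property.

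The hard part, in my view, is not the main recurrence but bookkeeping at the boundary. One must verify that the kink convention $x(\gamma_{-r}^C) = - x(\gamma_{r+1})$ used in Theorem~\ref{thm:progression_formula} correctly supplies the initial terms for small $k$ (in particular for $k=2$, where the formula involves $\gamma_0$), and that the case $i=j$ (where complementation is non-involutive) gives the same recurrence as $i \neq j$; both are handled uniformly by the cluster-algebraic framework since the progression formula was stated with no separate case distinction. Once these conventions are checked against Section~\ref{subsec:comp_arcs}, the proof reduces to the single line of algebra above.
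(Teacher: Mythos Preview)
Your approach is correct and rests on the same two ingredients the paper uses---Theorem~\ref{thm:progression_formula} and Remark~\ref{rem:brac_equals_2}---but you choose a different specialization. The paper does not prove Proposition~\ref{thm:arithmetic_Tsc15} directly (it is cited from \cite{Tsc15}); instead it offers Proposition~\ref{prop:arithmetic_progression} as a geometric explanation, using the $m=1$ case~(\ref{eqn:m_1}) together with an induction on $k$ (tracking both $\gamma_k$ and $\gamma_k^C$) to obtain $x(\gamma_k) = x(\gamma_{k-1}) + \bigl(x(\gamma_1) + x(\gamma_1^C)\bigr)$, thereby identifying the common difference explicitly. You instead invoke the $m=k-1$ case~(\ref{eqn:m_k_min_1}), which yields the three-term recurrence $x(\gamma_k) - x(\gamma_{k-1}) = x(\gamma_{k-1}) - x(\gamma_{k-2})$ in one line without induction, at the cost of not immediately exhibiting the common difference in closed form. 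Your caution about the $k=2$ base case is well placed: at $m=k-1=1$ the progression formula reads $x(\gamma_2) = 2x(\gamma_1) + x(\gamma_1^C)$ directly (the kink convention is not invoked there), so the initial difference is $x(\gamma_1)+x(\gamma_1^C)$, matching the paper's computation. Your remarks on specializing $x_\tau=1$ and on the diagonal indexing are accurate and are exactly how Remark~\ref{rem:compute_entires_with_progression_formulas} passes from Laurent polynomials to the integer frieze.
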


\begin{figure}[h!]
\centering
\begin{tikzpicture}[scale=.5][font=\normalsize] 
 \matrix(m) [matrix of math nodes,row sep={1.4em,between origins},column sep={1.12em,between origins},nodes in empty cells]{
&&&&&&&&&&&&&&&&&&&&&&&&&&&&&&&&&&&&&\\
&1&&1&&1&&1&&1&&1&&1&&1&&&&&&&&&&&&&\\
6&&1&&4&&1&&2&&6&&1&&4&&1&&&&&&&&&&&&\\
&5&&3&&3&&1&&11&&5&&3&&3&&1&&&\node{\cdots};&&&\\
9&&14&&2&&2&&5&&9&&14&&2&&2&&5&&&&&&&&&&\\
&\textbf{25}&&\textbf{9}&&\textbf{1}&&\textbf{9}&&\textbf{4}&&\textbf{25}&&\textbf{9}&&\textbf{1}&&\textbf{9}&&\textbf{4}&&&&&&&&\\
&&16&&4&&4&&7&&11&&16&&4&&4&&7&&11&&&&&&&&\\
&&&7&&15&&3&&19&&7&&7&&15&&3&&19&&7&&&\node{\cdots};&\\
&&&&26&&11&&8&&12&&3&&26&&11&&8&&12&&3&&&&&\\
&\node{\cdots};&&&&19&&29&&5&&5&&11&&19&&29&&5&&5&&11&&&&&\\
&&&&&&\textbf{50}&&\textbf{18}&&\textbf{2}&&\textbf{18}&&\textbf{8}&&\textbf{50}&&\textbf{18}&&\textbf{2}&&\textbf{18}&&\textbf{8}&&&&\\
&&&&&&&31&&7&&7&&13&&21&&31&&7&&7&&13&&21&&&\\
&&&&&&&&12&&24&&5&&34&&13&&12&&24&&5&&34&&13&\\
&&&&&\node{\cdots};&&&&41&&17&&13&&21&&5&&41&&17&&13&&21&&5&\\
&&&&&&&&&&29&&44&&8&&8&&17&&29&&44&&8&&8&&\\
&&&&&&&&&&&\textbf{75}&&\textbf{27}&&\textbf{3}&&\textbf{27}&&\textbf{12}&&\textbf{75}&&\textbf{27}&&\textbf{3}&&\textbf{27}&\\
&&&&&&&&&&&&&&&&&&&&&&&&&&&&&&&&&&&&&\\
&&&&&&&&&&&&&&&&&&&&&&\node[rotate=-6.5,shift={(-0.034cm,-0.08cm)}]  {\ddots};&&&&&&&&&&\node[rotate=-6.5,shift={(-0.034cm,-0.08cm)}]  {\ddots};&&&&&\\
};






\draw[dashed, fill=myred, fill opacity=0.25] (m-4-6) circle (0.57cm);
\draw[dashed, fill=myred, fill opacity=0.25] (m-9-11) circle (0.57cm);
\draw[dashed, fill=myred, fill opacity=0.25] (m-14-16) circle (0.57cm);

\draw[densely dotted, fill=mygreen, fill opacity=0.25] (m-2-2) circle (0.5cm);
\draw[densely dotted, fill=mygreen, fill opacity=0.25] (m-7-7) circle (0.5cm);
\draw[densely dotted, fill=mygreen, fill opacity=0.25]  (m-12-12) circle (0.5cm);
\end{tikzpicture}
\caption{Arithmetic progression in a frieze. 
The dotted and dashed circles
show two different arithmetic progressions. 
The dotted circles have a common difference of $3$, 
the dashed circles of $9$.}
\label{fig:arithmeticIntegerFrieze}
\end{figure}
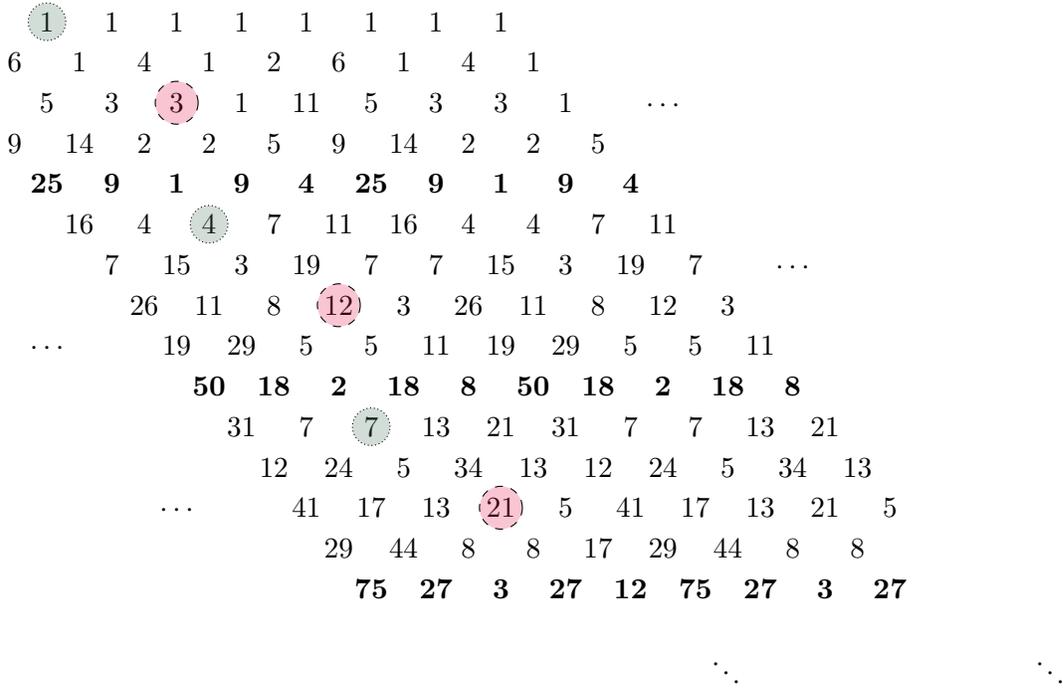

The following is a geometric explanation for Proposition \ref{thm:arithmetic_Tsc15}.

\begin{prop}
[{Corollary of Theorem \ref{thm:progression_formula}}]
\label{prop:arithmetic_progression}
Suppose $(S,M)$ is a once-punctured \disk. 
Let $\gamma_1=\gamma$ be an ordinary arc from $i$ to $j$ (possibly $i=j$) or a boundary edge from $i$ to $i+1$.  (Again, $\gamma$ is the generalized peripheral arc $ \gamma_1(i,j)$ as defined in Sec. \ref{subsec:comp_arcs}.)
Then, for $k \geq 2$, we have
\[
\displaystyle{
x(\gamma_k) = x(\gamma_{k-1}) + \left(~ x(\gamma_1) + x(\gamma_1^C) ~\right)
}
\]
\end{prop}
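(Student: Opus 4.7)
The plan is to combine the two special cases of Theorem~\ref{thm:progression_formula} highlighted in Remark~\ref{rem:m1} with the fact that every bracelet in a once-punctured \disk{} evaluates to $2$ (Remark~\ref{rem:brac_equals_2}). The formula $x(\Brac_1)=2$ collapses the three-term recurrence (\ref{eqn:m_k_min_1}) to a finite-difference recurrence from which the arithmetic progression is immediate.

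First, apply (\ref{eqn:m_k_min_1}) together with Remark~\ref{rem:brac_equals_2}, which gives $x(\Brac_1)=2$ since $(S,M)$ is a once-punctured \disk. This yields, for every $k\geq 3$,
\[
x(\gamma_k) \;=\; 2\,x(\gamma_{k-1}) - x(\gamma_{k-2}),
\]
which can be rewritten as
\[
x(\gamma_k) - x(\gamma_{k-1}) \;=\; x(\gamma_{k-1}) - x(\gamma_{k-2}).
\]
Thus the first differences of the sequence $\bigl(x(\gamma_k)\bigr)_{k\geq 1}$ are constant from $k\geq 2$ onwards, and it suffices to identify that constant with $x(\gamma_1)+x(\gamma_1^C)$.

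To pin down the constant, I would compute the base case $k=2$ using (\ref{eqn:m_1}), which at $k=2$ reads $x(\gamma_2)=x(\gamma_1)\,x(\Brac_1)+x(\gamma_1^C)$. Applying $x(\Brac_1)=2$ once more gives $x(\gamma_2)=2x(\gamma_1)+x(\gamma_1^C)$, and hence
\[
x(\gamma_2)-x(\gamma_1) \;=\; x(\gamma_1)+x(\gamma_1^C).
\]
Combining this with the constancy of first differences established above, a simple induction on $k\geq 2$ completes the proof: the formula $x(\gamma_k)=x(\gamma_{k-1})+\bigl(x(\gamma_1)+x(\gamma_1^C)\bigr)$ holds for $k=2$ by the base-case computation, and the inductive step is precisely the recurrence $x(\gamma_k)-x(\gamma_{k-1})=x(\gamma_{k-1})-x(\gamma_{k-2})$.

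There is essentially no real obstacle here; the statement is a clean corollary once one recognizes that in the once-punctured disk the scalar $x(\Brac_1)=2$ trivializes the Chebyshev-type recursion of Section~\ref{subsec:chebyshev}. The only point requiring care is the degenerate case $i=j$, where $\gamma_1$ is a once-punctured monogon and $\gamma_1^C$ is the empty arc with $x(\gamma_1^C)=1$; the argument above goes through verbatim, since (\ref{eqn:m_1}) at $k=2$ continues to give $x(\gamma_2)=2\,x(\gamma_1)+x(\gamma_1^C)$ under the conventions of Section~\ref{subsec:comp_arcs}.
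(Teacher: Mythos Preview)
Your argument is correct, and it takes a slightly different (and somewhat cleaner) route than the paper's own proof. The paper proceeds entirely via equation~(\ref{eqn:m_1}): it proves the base case exactly as you do, but for the inductive step it carries a parallel induction hypothesis for the complementary sequence, namely $x(\gamma_k^C)=x(\gamma_{k-1}^C)+\bigl(x(\gamma_1)+x(\gamma_1^C)\bigr)$, and then combines two applications of (\ref{eqn:m_1}) (once for $\gamma_{k+1}$ in terms of $\gamma_k^C$, once for $\gamma_k$ in terms of $\gamma_{k-1}^C$) to close the loop. Your use of (\ref{eqn:m_k_min_1}) with $x(\Brac_1)=2$ to obtain the second-difference identity $x(\gamma_k)-x(\gamma_{k-1})=x(\gamma_{k-1})-x(\gamma_{k-2})$ avoids tracking $\gamma_k^C$ altogether and makes the arithmetic-progression structure transparent. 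The trade-off is that the paper's approach yields the companion statement for $\gamma_k^C$ for free, whereas yours would need a separate (equally easy) argument if that were wanted.

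One small correction in your closing remark: in the degenerate case $i=j$, the paper's convention is $x(\gamma_1^C)=0$, not $1$ (see the parenthetical in the proof of Proposition~\ref{prop:complementary_diff} and the note in the paper's own proof of this proposition). This does not affect your argument, since you correctly observe that (\ref{eqn:m_1}) at $k=2$ still reads $x(\gamma_2)=2x(\gamma_1)+x(\gamma_1^C)$ regardless of the value of $x(\gamma_1^C)$.
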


\begin{proof}[Proof of Proposition \ref{prop:arithmetic_progression}]
We prove this by induction on $k$.

First, we have 
\begin{align*}
x(\gamma_2) &= x(\Brac_1) ~ x(\gamma_1) + x(\gamma_1^C)
\text{ by (\ref{eqn:m_1})} \\
&=2~x(\gamma_1) + x(\gamma_1^C) \text{ by Remark \ref{rem:brac_equals_2}}
\\
&=x(\gamma_1) + \left(~ x(\gamma_1) + x(\gamma_1^C)  ~\right).
\end{align*}
Similarly, $x(\gamma_2^C)=x(\gamma_1^C) + \left(~ x(\gamma_1) + x(\gamma_1^C) ~\right)$.  (Note that, if $i=j$, we have $x(\gamma_2^C) = x(\gamma_1)$ and $x(\gamma_1^C) = 0$, and so these expressions for $x(\gamma_2)$ and $x(\gamma_2^C)$ are still valid in this case.)

Next, suppose by induction we have the equality
$x(\gamma_k)=x(\gamma_{k-1}) + \left(~ x(\gamma_1) + x(\gamma_1^C) ~\right)$
as well as 
$x(\gamma_k^C)=x(\gamma_{k-1}^C) + \left(~ x(\gamma_1) + x(\gamma_1^C) ~\right)$, where again we let $x(\gamma_1^C) = 0$ if $i=j$.
Then
\begin{align*}
x(\gamma_{k+1}) &= 2~x(\gamma_1) + x(\gamma_k^C) \text{ by (\ref{eqn:m_1})
and Remark  \ref{rem:brac_equals_2}}\\
&= 2~x(\gamma_1) + x(\gamma_{k-1}^C) + \left(~  x(\gamma_1) + x(\gamma_1^C) ~\right) \text{  by the inductive hypothesis}
\\
&= \hspace{2.6em} x(\gamma_k) \hspace{2.6em} + \left(~ x(\gamma_1) + x(\gamma_1^C) ~\right) 
\text{ by (\ref{eqn:m_1}) 
and Remark \ref{rem:brac_equals_2}.}
\end{align*}
\end{proof}

\newcommand\bcifill{gray!70}

\appendix
\section{A bijection between BCI tuples and T-paths}
\label{appendix:BCI_Tpath}

It is known that a combinatorial $T$-path formula can be used to compute the Laurent expansion for an arc in a polygon \cite{Sch08Tpath} (see also \cite{ST09,GM15} for the case of a general surface).
On the other hand, every entry in a finite (respectively, infinite) integral frieze counts the number of matchings, $m_{ij}$, between vertices of a polygon (respectively, once-punctured {\disk} or annulus) and triangles of the associated triangulation \cite[Section 2]{BCI74} (respectively, \cite[Section 4.5]{Tsc15} and \cite[Section 5]{BPT16}). 
We give a bijection between these classical matching tuples and the more recent $T$-paths.
In the case of polygons, such a bijection was constructed by Carroll and Price \cite{CP03,Pro05} in unpublished work. This gives a combinatorial formula (Corollary \ref{rem:bci_formula})
for computing Laurent expansion formulas for cluster variables via these matching tuples in the case of a once-punctured {\disk} or an annulus.

Throughout this section, let $T$ be a triangulation of an annulus or a once-punctured {\disk} $(S,M)$.
Let $\gamma$ be a generalized peripheral arc (allowing self-crossings) on a boundary component $Bd$ of $(S,M)$. 

Choose the usual orientation so that $Bd$ is to the right of $\gamma$.
For the purpose of computing the Laurent polynomial expansion of $x(\gamma)$,
we will work with a finite polygon cover $\widetilde{S}_\gamma$ of $T$ containing a lift of $\gamma$.
By abuse of notation, we also denote the lift of $\gamma$ in $\widetilde{S}_\gamma$  by $\gamma$.
Let $s$ and $t$ be the starting and ending points of $\gamma$, respectively.
Let $R_1$, $R_2$, $\dots$, $R_r$ be the boundary vertices to the right of $\gamma$, not including $s$ and $t$, and let $L_1$, $\dots$, $L_l$ be the boundary vertices to the left of $\gamma$.
These vertices are ordered so that $s$, $R_1$, $R_2$, $\dots$, $R_r$, $t$, $L_l$, $\dots$, $L_2$, $L_1$ go counterclockwise around the polygon cover $\widetilde{S}_\gamma$. 
See Figures
\ref{fig:Sgamma_rectangle} and
\ref{fig:example_finite_cover_strip}.
We say that the $R_i$ are the right vertices and the $L_i$ are the left vertices.
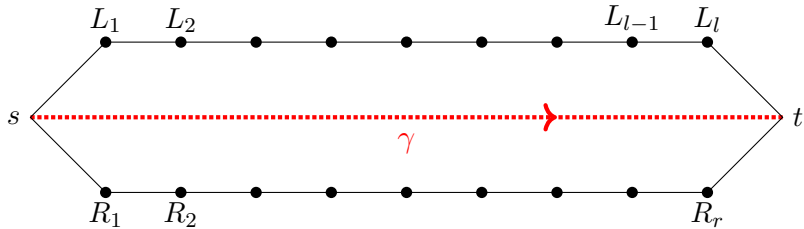
\begin{figure}[h!]
\begin{center}
\begin{tikzpicture}

\foreach \t in {-4,-3,-2,-1,0,1,2,3,4}{
	\fill (\t, -1) circle (0.075);
	\fill (\t, 1) circle (0.075);
}

\draw (-4,-1) node [below] {$R_1$};
\draw (-4,1) node [above] {$L_1$};
\draw (-3,-1) node [below] {$R_2$};
\draw (-3,1) node [above] {$L_2$};
\draw (3,1) node [above] {$L_{l-1}$};
\draw (4,-1) node [below] {$R_{r}$};
\draw (4,1) node [above] {$L_{l}$};
\draw(-5,0) node [left] {$s$};
\draw (5,0) node [right] {$t$};

\draw[ultra thick, densely dotted, red, ->] (-5,0) -- (2,0);
\draw[ultra thick, densely dotted, red] (5,0) -- (2,0);
\draw[red] (0,-.35) node[] {\large{$\gamma$}};

\draw (-4,1) -- (4,1);
\draw (-4,-1) -- (4,-1);
\draw (-4,-1) -- (-5,0)  -- (-4,1);
\draw (4,-1)  -- (5,0)  -- (4,1);

\end{tikzpicture}
\caption{A generic polygon cover for a generalized arc $\gamma$.}
\label{fig:Sgamma_rectangle}
\end{center}
\end{figure}
\subsection{BCI tuples and $T$-paths}

\newcommand\bcituple{\text{B}}
\newcommand\bcitrail{\text{TR }}

\begin{defn}[{\cite[Section 2]{BCI74}}]
\label{defn:bcituple}
A \emph{BCI tuple 
for $\gamma$} is an $r$-tuple $(t_1,\dots,t_r)$ such that:
\begin{enumerate}[(\bcituple 1)]
\item \label{bci:1}
the $i$-th entry $t_i$ is a triangle of $\widetilde{S_\gamma}$ having $R_i$ as a vertex. (We say that the vertex $R_i$ is matched to the triangle in the $i$-th entry of the tuple, and write $\triangle(R_i):=t_i$).
 \item
the entries are pairwise distinct.
\end{enumerate}
\end{defn}

Let $\Delta_0$, $\dots$, $\Delta_d$ be the triangles of $\widetilde{S}_\gamma$ which are crossed by $\gamma$, in order.

\begin{ex}
\label{ex:11_bci_tuples}
The following are the BCI tuples for the generalized arc $\gamma$ from Fig.~\ref{fig:example_bcituples}.
Note that $A,B,C,D,E$, and $F$ are the  triangles which are \emph{not} crossed by $\gamma$ but which are adjacent to at least one of the vertices $R_1,\dots,R_8$ (located to the right of $\gamma$).
\begin{center}
\begin{multicols}{2}
\begin{enumerate}[i.]
\item {$\left(\Delta_3, A, B, C, D, \Delta_4, E, F\right)$} 
\item {$\left(\Delta_2, A, B, C, D, \Delta_5, E, F\right)$} 
\item {$\left(\Delta_2, A, B, C, D, \Delta_4, E, F\right)$} 
\item {$\left(\Delta_2, A, B, C, D, \Delta_3, E, F\right)$} 
\item {$\left(\Delta_1, A, B, C, D, \Delta_5, E, F\right)$} 
\item {$\left(\Delta_1, A, B, C, D, \Delta_4, E, F\right)$} 
\item {$\left(\Delta_1, A, B, C, D, \Delta_3, E, F\right)$} 
\item {$\left(\Delta_0, A, B, C, D, \Delta_5, E, F\right)$} 
\item {$\left(\Delta_0, A, B, C, D, \Delta_4, E, F\right)$} 
\item {$\left(\Delta_0, A, B, C, D, \Delta_3, E, F\right)$} 
\item {$\left(\Delta_3, A, B, C, D, \Delta_5, E, F\right)$} 
\end{enumerate}
\end{multicols}
\end{center}
\begin{figure}[h!]
\begin{tikzpicture}[scale = .4]
		\tikzstyle{every node} = [font = \small]
		
		\foreach \x in {0,10}
        {
        	\foreach \y in {-8}
            {
            \PruferShort{\x-6}{\y-3}{10}{}
            }
        }
        
        \PruferShortCurvedLeft{20-6}{-8-3}{10}{}
        
        \foreach \x in {0,10,20}
		{
			\foreach \y in {-8}
			{
				\draw[] (\x-6,\y-3) -- (\x+4,\y-3);
			
				\foreach \t in {-6,-4,-2,0,2,4}
				{
					\fill (\x+\t,\y-3) circle (.1);
				}

				\fill (\x-6,\y-3) node [below] {$1$};
				\fill (\x-4,\y-3) node [below] {$2$};
				\fill (\x-2,\y-3) node [below] {$3$};
				\fill (\x+0,\y-3) node [below] {$4$};
				\fill (\x+2,\y-3) node [below] {$5$};
				\fill (\x+4,\y-3) node [below] {$1$};

				\fill (\x-5.7,\y-1) node [left] {\tiny{$\tau_1$}};
				\draw[] (\x-6,\y-3) .. controls (\x-5,\y-2) and (\x-3,\y-2) .. (\x-2,\y-3);
				\fill (\x-2.3,\y-2.9) node [above] {\tiny{$\tau_2$}};
                \draw[] (\x-6,\y-3) .. controls (\x-4.5,\y) and (\x+3,\y) .. (\x+4,\y-3);
                \fill (\x-0,\y-0.5) node [right] {\tiny{$\tau_4$}};
				\draw[] (\x-6,\y-3) .. controls (\x-4.5,\y-1.25) and (\x-1.5,\y-1.25) .. (\x-0,\y-3);
				\fill (\x-3,\y-1.8) node [above] {\tiny{$\tau_3$}};

				\draw[] (\x+0,\y-3) .. controls (\x+1,\y-2) and (\x+3,\y-2) .. (\x+4,\y-3);
				\fill (\x+2,\y-2.4) node [above] {\tiny{$\tau_0$}};	
			}
		}	
        \PruferShortCurvedLeft{20+4}{-8-3}{10}{}

        \fill (20+3.7,-8-1) node [right,black] {\tiny{$\tau_1$}};
        
        \foreach \x in {0}
		{
			\foreach \y in {-8}
			{
				\draw[red,densely dotted,->,very thick] (\x+2,\y-3) .. controls (\x+4,\y+1) and (\x+19,\y+1) .. (\x+20,\y-2.9);
              \fill (\x+17,\y) node[red]{$\gamma$};

                }
         }
         
         \fill[violet] (0+10,-8+0.9) circle (.2);
         \fill (0+10,-8+0.9) node [above] {$L_5=\puncture$};
         
\foreach \x in {1,2,3,4,5,6,7,8}
{
\fill (2*\x+2.25,-8-4.5) node {$R_\x$};
}
\fill (0+1.8,-8-4.5) node {$s$};
\fill (0,-8-4.5) node {$L_1$};
\fill (0-1.9,-8-4.5) node {$L_2$};
\fill (0-3.9,-8-4.5) node {$L_3$};
\fill (-6,-8-4.5) node {$L_4$};
\fill (20,-8-4.5) node {$t$};
\fill (24,-8-4.5) node {$L_6$};
\fill (24-2.1,-8-4.5) node {$L_7$};
         
\end{tikzpicture}

\begin{tikzpicture}[scale = .4]
		\tikzstyle{every node} = [font = \small]
		
		\foreach \x in {0,10}
        {
        	\foreach \y in {-8}
            {
            \PruferShort{\x-6}{\y-3}{10}{}
            }
        }
        
        \PruferShortCurvedLeft{20-6}{-8-3}{10}{}
        
        \foreach \x in {0,10,20}
		{
			\foreach \y in {-8}
			{
				\draw[] (\x-6,\y-3) -- (\x+4,\y-3);
			
				\foreach \t in {-6,-4,-2,0,2,4}
				{
					\fill (\x+\t,\y-3) circle (.1);
				}

				\draw[] (\x-6,\y-3) .. controls (\x-5,\y-2) and (\x-3,\y-2) .. (\x-2,\y-3);

                \draw[] (\x-6,\y-3) .. controls (\x-4.5,\y) and (\x+3,\y) .. (\x+4,\y-3);

				\draw[] (\x-6,\y-3) .. controls (\x-4.5,\y-1.25) and (\x-1.5,\y-1.25) .. (\x-0,\y-3);

				\draw[] (\x+0,\y-3) .. controls (\x+1,\y-2) and (\x+3,\y-2) .. (\x+4,\y-3);	
			}
		}	
        \PruferShortCurvedLeft{20+4}{-8-3}{10}{}
         
\fill[violet] (0+10,-8+0.9) circle (.2);
\fill (0+10,-8+0.9) node [above] {$\puncture$};
         
\fill (0+2,-8-3.2) node [above] {$\Delta_0$};
\fill (10+2,-8-3.1) node [above] {$D$};

\fill (5+1,-8-3.18) node [above] {$A$};
\fill (15+1,-8-3.18) node [above] {$E$};

\fill (5+3.2,-8-2.9) node [above] {$B$};
\fill (15+3.2,-8-2.9) node [above] {$F$};

\fill (0-2,-8-0.5) node [above] {$\Delta_2$};
\fill (-2+10,-8-0.5) node [above] {$\Delta_3$};
\fill (-2+20,-8-0.5) node [above] {$\Delta_4$};  

\fill (0,-8-2.2) node [above] {$\Delta_1$};
\fill (0+10,-8-2.2) node [above] {$C$}; 
\fill (0+20,-8-2.2) node [above] {$\Delta_5$};  
         
\foreach \x in {1,2,3,4,5,6,7,8}
{
\fill (2*\x+2.25,-8-4) node {$R_\x$};
}
\fill (0+1.8,-8-4) node {$s$};
\fill (20,-8-4) node {$t$};
\end{tikzpicture} 
\caption{Finite polygon cover of $T$ containing copies of triangles crossed by a lift of $\gamma$.}
 \label{fig:example_finite_cover_strip}
\label{fig:example_bcituples}
\end{figure}
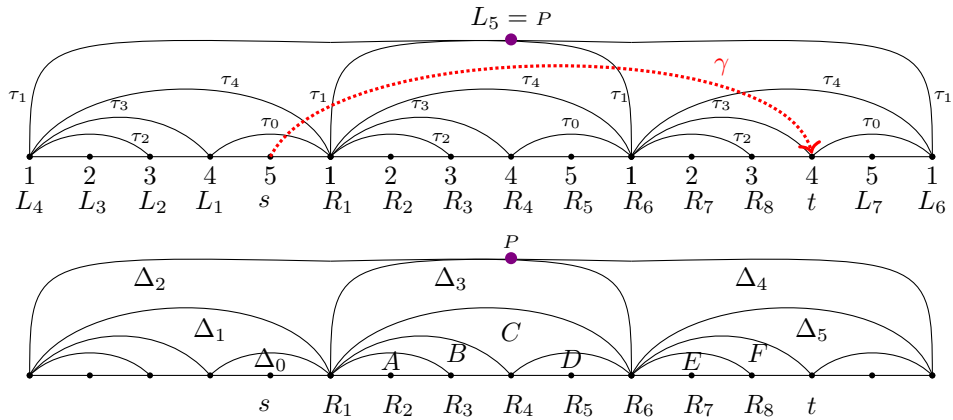
\end{ex}

\begin{defn}[BCI trail]
\label{defn:bcitrail}
Let $b$ be a BCI tuple for $\gamma$. 
We define 
$\operatorname{trail}(b):=\alpha=(\alpha_1$, $\dots$, $\alpha_{\text{length}(\alpha)})$ to be a walk from the beginning to the ending point of $\gamma$ along edges of the triangulation $\widetilde{S}_\gamma$ such that:
\begin{enumerate}[(\bcitrail i)]
\item \label{defn:bcitrail:matched}
the triangles in $b$ (called \emph{matched triangles}) are to the right of $\alpha$, and
\item \label{defn:bcitrail:unmatched}
the triangles not in $b$ (called \emph{unmatched triangles}) are to the left of $\alpha$.
\end{enumerate}

For example, Fig.~\ref{fig:bcitrail_big} shows $\operatorname{trail}(b) = (b_{40}$, $\tau_0$, $\tau_1$, $\tau_1$, $\tau_3)$ for a BCI tuple $b=(\Delta_0$, $A$, $B$, $C$, $D$, $\Delta_3$, $E$, $F)$ from Example \ref{ex:11_bci_tuples}.

\begin{figure}[h!]
\begin{center}
\begin{tikzpicture}

\draw [fill=\bcifill] (-5,0) -- (-4,1) -- (-4,-1) -- cycle;
\draw [fill=\bcifill] (-4,-1) -- (0.65,1) -- (1,-1) --cycle;
\draw [fill=\bcifill] (1,-1)  -- (4,0) -- (3,-1) -- cycle;

\foreach \t in {-4,-3,-2,-1,0,1,2,3}{
	\fill (\t, -1) circle (0.075);
}

\foreach \t in {-4,-2.75,-1.65,-.5,.65,1.75,3}{
	\fill (\t, 1) circle (0.075);
}

\draw (-4,-1) node [below] {$R_1$};
\draw (-3,-1) node [below] {$R_2$};
\draw (-2,-1) node [below] {$R_3$};
\draw (-1,-1) node [below] {$R_4$};
\draw (-0,-1) node [below] {$R_5$};
\draw (1,-1) node [below] {$R_6$};
\draw (2,-1) node [below] {$R_{7}$};
\draw (3,-1) node [below] {$R_{8}$};

\draw(-5,0) node [left] {$s$};
\draw (4,0) node [right] {$t$};

\draw (-4,1) node [above] {$L_1$};
\draw (-2.75,1) node [above] {$L_2$};
\draw (-1.65,1) node [above] {$L_3$};
\draw (-0.5,1) node [above] {$L_4$};
\draw (0.65,1) node [above] {$L_5$};
\draw (1.75,1) node [above] {$L_{6}$};
\draw (3,1) node [above] {$L_{7}$};


\draw (-4,1) .. controls (-3.5,0) and (-2.5,0) .. (-.5,1);
\draw (-3.5,0.65) node[] {$\tau_3$};
\draw (-2.75,1) .. controls (-2.25,.5) and (-1.75,.5) .. (-.5,1);
\draw (-2.75,.7) node[] {$\tau_2$};

\draw (-4,-1) .. controls (-1,-0) and (-0,0) .. (1,-1);
\draw (-.5,-.1) node[] {$\tau_4$};
\draw (-4,-1) .. controls (-1.75,-.45) and (-1.5,-.45) .. (-1,-1);
\draw (-1.5,-.5) node[] {$\tau_3$};
\draw (-4,-1) .. controls (-2.25,-.75) .. (-2,-1);
\draw (-2,-.75) node[] {$\tau_2$};
\draw (-1,-1) .. controls (-.25,-.45) and (.25,-.45) .. (1,-1);
\draw (-.5,-.5) node[] {$\tau_5$};

\draw (1,-1) .. controls (2.25,-.75) and (2.5,-.75) .. (3,-1);
\draw (2.75, -.75) node[] {$\tau_2$};

\draw[] (-4,1) -- (3,1);
\draw[] (-4,-1) -- (3,-1);
\draw[] (-4,-1) -- (-5,0);
\draw[] (3,-1)  -- (4,0)  -- (3,1);

\draw[very thick,red,postaction={on each segment={mid arrow=red}}] (-5,0) -- (-4,1);
\draw[red] (-4.5,0.65) node[left] {$b_{40}$};

\draw[very thick, red,postaction={on each segment={mid arrow=red}}] (-4,1) -- (-4,-1);
\draw[red] (-4,0) node[right] {$\tau_0$};

\draw[very thick, red,postaction={on each segment={mid arrow=red}}] (-4,-1) -- (0.65,1);
\draw[red] (-.5,.35) node[right] {$\tau_1$};

\draw[] (-4,-1) -- (-0.5,1);
\draw (-.8,.75) node[right] {$\tau_4$};

\draw[] (1,-1) -- (1.75,1);
\draw (1.15,-.5) node[right] {$\tau_4$};

\draw[very thick, red,postaction={on each segment={mid arrow=red}}] (0.65,1) -- (1,-1);
\draw[red] (.8,.4) node[right] {$\tau_1$};

\draw[very thick, red,postaction={on each segment={mid arrow=red}}] (1,-1) -- (4,0);
\draw[red] (2.3,-.5) node[above] {$\tau_3$};

\draw[] (1.75,1) -- (4,0);
\draw (2.25,.75) node[below] {$\tau_5$};

\draw (-4.5,-.65) node[left] {$b_{01}$};
\draw (3.5,.65) node[right] {$b_{40}$};
\draw (3.5,-.65) node[right] {$b_{34}$};

\end{tikzpicture}
\end{center}
\caption{The walk $\operatorname{trail}(b) = (b_{40}, \tau_5, \tau_1, \tau_1, \tau_3)$ in $\widetilde{S}_\gamma$ for a BCI tuple $b=(\Delta_0$, $A$, $B$, $C$, $D$, $\Delta_3$, $E$, $F)$ from Example \ref{ex:11_bci_tuples}.}
\label{fig:bcitrail_big}
\end{figure}
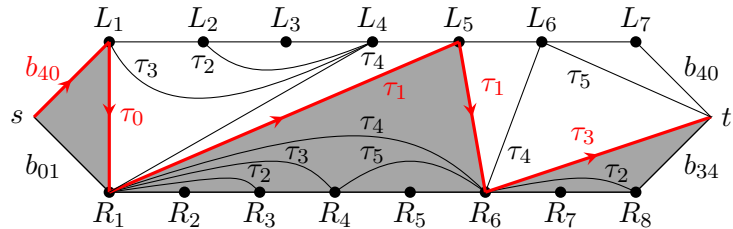

We say that $\operatorname{trail}(b)$ is the \emph{BCI trail} associated to $b$.
By convention, if $\gamma$ is an arc of $T$, the BCI tuple for $\gamma$ is an empty tuple, and $\gamma$ is the BCI trail of length $1$ for itself.
\end{defn}

\begin{rem}\label{rem:bcitrail}
To see that Definition~\ref{defn:bcitrail} is well-defined, we observe that,
if $t_1\neq \Delta_0$, the first step of $\alpha$ must go along the edge from $s$ to $R_1$. Otherwise, the first step goes from $s$ to $L_1$.
Similarly, the last step of $\alpha$ goes from $L_l$ to $t$ if $\Delta_d$ is matched. If $R_r$ is matched to a different triangle, then the last step of $\alpha$ goes from $R_r$ to $t$.
It is clear that no two distinct BCI tuples correspond to the same BCI trail.
\end{rem}

Since we are working on a polygonal cover, we only need to recall the definition of (\emph{reduced}) $T$-paths of type $A$.  
However, $T$-paths can be defined directly on ideal triangulations of a general marked surface \cite{ST09,Sch10} (possibly a once-punctured {\disk}, see \cite{GM15}). 

\begin{defn}[{\cite[Definition 1]{Sch08Tpath}}, reduced $T$-paths]
\label{defn:reduced}
Let $T$ denote the triangulation of $\widetilde{S}_\gamma$, where
$\tau_1,\dots,\tau_d$ are the inner diagonals, and $\tau_{d+1},\dots,\tau_{2d+3}$ are the boundary edges of the polygon cover $\widetilde{S}_\gamma$.
A \emph{reduced $T$-path} for $\gamma = (s,t)$ is a sequence
$$\alpha=(a_0,a_1,\dots,a_{\text{length}(\alpha)}|i_1,i_2,\dots,i_{\text{length}(\alpha)})$$
such that
\begin{enumerate}[(T1)]
\item $s=a_0,a_1,\dots,a_{\text{length}(\alpha)}=t$ are vertices of $\widetilde{S}_\gamma$. 
\item Each $\tau_{i_k}$ connects the vertices $a_{i_{k-1}}$ and $a_{i_k}$
for each $k=1,2,\dots,\text{length}(\alpha)$.
\item No step goes along the same edge twice.
\item The length of a $T$-path is odd.
\item Every even step crosses $\gamma$.
\item If $j<k$ and both $\tau_{i_j}$ and $\tau_{i_k}$ cross $\gamma$, then the crossing point of $\tau_{i_j}$ and $\gamma$ is closer to the vertex $s$ than the crossing point of $\tau_{i_k}$ and $\gamma$.
\end{enumerate}
\end{defn}

We will show the following in Sections \ref{subsec:trail_map} and \ref{subsec:triangles_map}.
\begin{prop}
\label{thm:bci_tpath_bijection}
Let $\gamma$ be a generalized arc.
The BCI trail map defined in Definition \ref{defn:bcitrail} gives a
 bijection between the set of BCI tuples for $\gamma$ and the set of reduced $T$-paths for $\gamma$.
\end{prop}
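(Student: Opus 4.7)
The plan is to prove Proposition \ref{thm:bci_tpath_bijection} by exploiting the fact that, on the polygon cover $\widetilde{S}_\gamma$, the lift of $\gamma$ is a \emph{simple} arc separating the $r$ right vertices from the $l$ left vertices. Thus the situation reduces to the polygon case treated by Carroll--Price~\cite{CP03,Pro05}, and the proof consists of (i) checking that $\operatorname{trail}$ is well-defined, (ii) verifying the output satisfies (T1)--(T6), and (iii) exhibiting an explicit inverse.

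First I would verify that $\operatorname{trail}(b)$ is well-defined. Trace the arc $\gamma$ through its sequence $\Delta_0,\dots,\Delta_d$ of triangles, and at each consecutive pair $\Delta_{j-1},\Delta_j$ consider the shared diagonal $\tau_{k_j}$. Between consecutive crossings, the walk sits at the single boundary vertex of the fan that separates matched from unmatched triangles incident to that vertex. Since a BCI tuple assigns each right vertex $R_i$ to exactly one triangle incident to it, and distinct right vertices receive distinct triangles, the two conditions (\bcitrail \ref{defn:bcitrail:matched}) and (\bcitrail \ref{defn:bcitrail:unmatched}) force a unique choice of vertex at each stage, with the first and last steps handled by the case analysis in Remark \ref{rem:bcitrail}.

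Next I would check the six reduced $T$-path axioms in order. (T1) and (T2) are immediate from the construction. For (T3), within any single fan at a vertex the walk uses each incident edge at most once because each triangle is counted at most once in $b$. For (T5), a step crosses $\gamma$ iff the walk switches between the right and left side of $\gamma$, which occurs exactly along the diagonals $\tau_{k_j}$ separating a matched from an unmatched triangle; thus only the even-indexed edges cross $\gamma$, after re-indexing so that odd steps are along boundary-side edges. For (T4), an induction on $d$ shows the total number of steps equals $1+2(\text{number of side-switches})$, which is always odd. Finally, (T6) follows because the switches occur in the order $\Delta_0,\Delta_1,\dots,\Delta_d$ along $\gamma$, so the crossed $\tau_{k_j}$ appear in the correct order from $s$ to $t$.

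To build the inverse, given a reduced $T$-path $\alpha$ for $\gamma$ I would define $\triangle(R_i)$ to be the unique triangle incident to $R_i$ that lies on the $\gamma$-side of $\alpha$ inside the fan at $R_i$ (equivalently, the triangle directly ``beneath'' $\alpha$ at $R_i$). Axioms (T5)--(T6) guarantee this triangle exists and is uniquely determined, and the pairwise-distinctness across different $R_i$ follows from (T3) together with the fact that a triangle is incident to at most one right vertex once one also uses the fan structure at each right vertex. That $\operatorname{trail}$ and this inverse are mutually inverse is then a direct unpacking of the two definitions. The main obstacle I expect is the careful bookkeeping in step (ii) when $\gamma$ is adjacent to the endpoints $s$ or $t$, so that $\Delta_0$ or $\Delta_d$ shares a vertex with $\gamma$; here the first (resp.\ last) step of the trail is a boundary edge rather than a diagonal, and the parity count for (T4) must be handled as a base case of the induction before the bulk argument applies.
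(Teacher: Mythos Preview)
Your overall architecture matches the paper's: define $\operatorname{trail}$, verify (T1)--(T6), then build an explicit inverse (the paper calls it the \emph{triangles map}). The paper also first reduces via Lemma~\ref{lem:far_vertices} to the case where every right vertex is adjacent to a triangle crossed by $\gamma$, which you implicitly assume.

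There is, however, a genuine gap at the heart of your step (ii). The entire content of the proposition lies in showing that crossing and non-crossing steps \emph{alternate}, i.e.\ that every even step crosses $\gamma$ and every odd step does not. Your argument for (T5) asserts this ``after re-indexing so that odd steps are along boundary-side edges,'' which assumes the conclusion; and your formula for (T4), length $=1+2(\text{side-switches})$, presupposes exactly one non-crossing step between consecutive crossings, which is again the statement to be proved. The paper establishes this alternation by a careful induction on $j$ (Lemma~\ref{lem:bci_tpaths}), splitting into four local configurations (Figures~\ref{fig:RtoR}--\ref{fig:LtoR}) depending on whether $\alpha_{2j}$ goes right-to-left or left-to-right and whether the next odd step stays on the same side or not. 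Without this case analysis you have not shown (T4) or (T5).

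Your inverse is also underspecified. You define $\triangle(R_i)$ as ``the unique triangle incident to $R_i$ that lies on the $\gamma$-side of $\alpha$,'' but when $\alpha$ does not pass through $R_i$ the entire fan FAN$(R_i)$ lies on one side of $\alpha$, so several triangles qualify. (Relatedly, your claim that ``a triangle is incident to at most one right vertex'' is false: when the uncrossed edge of $\Delta_k$ lies on the right of $\gamma$, both of its endpoints are right vertices.) The paper's inverse instead walks along $\alpha$ and, at each even step, reads off which block of right vertices gets matched to which contiguous fan of triangles bounded by that step; this is what guarantees both existence and distinctness of the assignment.
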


\subsection{Cluster expansion formula in terms of BCI tuples}

We assign to a BCI tuple $b$ the same weight as the reduced $T$-path corresponding to it. This choice of weight agrees with the weighting given by Caroll and Price \cite{CP03} and Propp \cite[pages 10-11]{Pro05}. An equivalent weighting is used in a recent article \cite{Yur16}. 

\begin{defn}[Laurent monomial]
To any BCI tuple $b$, 
we associate an element $x(b)$ in the cluster algebra $\mathcal{A}(S,M)$ by first considering
$\operatorname{trail}(b) = \alpha = (\alpha_1$, $\dots$, $\alpha_{\text{length}(\alpha)})$
and setting
\[
x(b)=x(\alpha)=\frac{\prod_{i \text{ odd}} x_{\alpha_i}}{\prod_{i \text{ even}} x_{\alpha_i}}.
\]
\end{defn}

Due to Proposition \ref{thm:bci_tpath_bijection}, we can rewrite the reduced $T$-path cluster expansion formula \cite[Thm. 1.2]{Sch08Tpath} in terms of BCI tuples as follows.

\begin{cor}[BCI tuple expansion formula]\label{rem:bci_formula}
Let $\mathcal{A}(S,M)$ be a cluster algebra arising from $(S,M)$, as per Theorem \ref{thm:FST}. 
Let $\gamma$ be a peripheral generalized arc.
The Laurent polynomial corresponding to $\gamma$ with respect to the cluster $x_T$ is
\begin{gather}
\label{eq:expansion_formula}
x_\gamma =\sum_{b} \, x(b)
\end{gather}
where the sum is over all BCI tuples $b$ for $\gamma$.
This formula does not depend on our choice of orientation on $\gamma$.
\end{cor}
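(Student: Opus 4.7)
The proof plan is to leverage Proposition~\ref{thm:bci_tpath_bijection}, which sets up a weight-preserving bijection between BCI tuples for $\gamma$ and reduced $T$-paths for $\gamma$, together with Schiffler's classical $T$-path expansion formula \cite[Thm. 1.2]{Sch08Tpath}. The weight $x(b)$ is defined so that it exactly matches the weight of the reduced $T$-path $\operatorname{trail}(b)$, so the content of the corollary is really an immediate transport-of-structure once the bijection is in hand.

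Concretely, I would first pass to the polygon cover $\widetilde{S}_\gamma$ constructed at the start of the appendix; the peripheral generalized arc $\gamma$ lifts to a non-self-crossing arc in this finite unpunctured polygon, which is precisely the setting in which Schiffler's $T$-path formula is proved. Applied to the lift of $\gamma$ in $\widetilde{S}_\gamma$, the formula expresses the corresponding (type $A$) cluster variable as the sum $\sum_\alpha x(\alpha)$ over all reduced $T$-paths $\alpha$ satisfying conditions (T1)--(T6) of Definition~\ref{defn:reduced}.

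Next, I would identify this expansion with the Laurent expansion of $x_\gamma$ in $\mathcal{A}(S,M)$. This is where the main technical obstacle lies: although $\widetilde{S}_\gamma$ and $(S,M)$ have different cluster algebras, the snake graph constructions of \cite{MSW11,MW13}, recalled in Section~\ref{subsec:laurent_generalized_arcs} and summarized via Definition~\ref{defn:generalized_arc_laurent_polynomial}, are insensitive to this distinction: a snake graph built from $\gamma$ in $(S,M)$ coincides with the snake graph built from its lift in $\widetilde{S}_\gamma$ (with boundary edges of $\widetilde{S}_\gamma$ that project to boundary edges of $S$ specialized to $1$, and all other edges labeled by the corresponding arc of $T$). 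Since the $T$-path formula and the snake graph formula are both standard expansions of the same cluster algebra element, the polygon-level identity descends to the desired identity $x_\gamma = \sum_\alpha x(\alpha)$ in $\mathcal{A}(S,M)$.

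Finally, applying Proposition~\ref{thm:bci_tpath_bijection} rewrites this as $x_\gamma = \sum_b x(b)$, since $x(b) = x(\operatorname{trail}(b))$ by construction. The orientation independence is then essentially free: reversing the orientation of $\gamma$ swaps the roles of ``right'' and ``left'' vertices and thus produces a different set of BCI tuples, but the cluster algebra element $x_\gamma$ is intrinsic to the unoriented arc, so both sums compute $x_\gamma$ and hence agree.
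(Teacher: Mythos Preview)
Your proposal is correct and follows essentially the same approach as the paper: the paper simply states that, due to Proposition~\ref{thm:bci_tpath_bijection}, one can rewrite the reduced $T$-path expansion formula of \cite[Thm.~1.2]{Sch08Tpath} in terms of BCI tuples, and you have spelled this out in more detail (including the passage to the polygon cover $\widetilde{S}_\gamma$ and the orientation-independence remark).
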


\begin{ex}[Example of a Laurent expansion corresponding to a generalized arc]
Consider the ideal triangulation $T$ of a once-punctured {\disk} and a generalized arc in
Fig. 
\ref{fig:example_finite_cover_strip}.
We obtain the $11$ BCI tuples for $\gamma$, listed in Example
\ref{ex:11_bci_tuples}. 
Following (\ref{eq:expansion_formula}), we compute
\[
x_{\gamma} = 
\frac{x_{0} x_{1} x_{4} + 
2 x_{1} x_{3} x_{4} + 
2 x_{0}^{2} + 
4 x_{0} x_{3} + 
2 x_{3}^{2}}
{x_{0} x_{1} x_{4}}
\]
by specializing $x(\tau)=1$ for each boundary edge $\tau$.
\end{ex}

\subsection{Proof that the trail map sends BCI tuples to $T$-paths}
\label{subsec:trail_map}
In this section, we show that the map 
 \begin{gather*}
 \{ \text{BCI tuples for $\gamma$} \} \to 
 \{ \text{reduced $T$-paths for $\gamma$} \}\\
 b \mapsto \operatorname{trail}(b),
 \end{gather*}
where $\operatorname{trail}(b)$ is the BCI trail corresponding to $b$ defined in Definition \ref{defn:bcitrail}, is well-defined.
We mean this in the sense that,
given a BCI tuple $b$ for $\gamma$,
the walk $\operatorname{trail}(b)$ is indeed a reduced $T$-path for $\gamma$.

\begin{lem}\label{lem:far_vertices}
Suppose that the $i$-th right vertex $R_i$ is not adjacent to a triangle which crosses $\gamma$.
Then $R_i$ can only be matched to one triangle, and so the $i$-th entry in every BCI tuple for $\gamma$ is fixed.
\end{lem}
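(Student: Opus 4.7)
The plan is to first show that the hypothesis forces $R_i$ to lie in a sub-polygon $P$ of $\widetilde{S}_\gamma$ entirely on the right of $\gamma$, and then to argue by induction on the size of $P$ that the matched triangle at $R_i$ is uniquely determined.

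First, I would observe that the hypothesis implies no edge $\tau$ of the triangulation incident to $R_i$ can be crossed by $\gamma$. Otherwise $\tau$ would be shared by two triangles, each crossed by $\gamma$ through $\tau$ and each having $R_i$ as a vertex, contradicting the hypothesis. Hence every triangle incident to $R_i$ lies entirely on the right of $\gamma$, so $R_i$ lies in a \emph{pocket} $P$: a maximal connected triangulated sub-polygon of $\widetilde{S}_\gamma$ cut out by edges of crossing triangles and lying on the right of $\gamma$. A standard analysis of crossing triangles (distinguishing those whose tip is on the left of $\gamma$ from those whose tip is on the right) shows that each such $P$ is bounded above by a single edge (the edge of a tip-on-left crossing triangle opposite its tip), all vertices of $P$ are right vertices, and only the two endpoints of that top edge --- the \emph{pocket-boundary vertices} --- are adjacent to crossing triangles of $\widetilde{S}_\gamma$. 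The remaining \emph{pocket-interior vertices} are precisely the right vertices satisfying the hypothesis of the lemma; in particular, $R_i$ is one of them.

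The heart of the argument is the claim that in every such $P$, the assignment of pocket-interior vertices to distinct adjacent triangles of $P$ is uniquely determined in every BCI tuple. I would prove this by induction on the number $p$ of vertices of $P$. The case $p = 3$ is immediate, as the unique pocket-interior vertex is forced to the unique triangle. For $p \ge 4$, the two-ears theorem gives at least two ears of $P$, and I would show that the two pocket-boundary vertices cannot both be ear tips: if they were, their two ear triangles would both be adjacent to the single top edge of $P$, but that edge is a boundary edge of $P$ and therefore adjacent to exactly one triangle inside $P$. Hence at least one ear tip is a pocket-interior vertex $R_j$, which is adjacent to a unique triangle $T_j$ of $P$ and must therefore be matched to $T_j$ in any BCI tuple. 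Removing $R_j$ and $T_j$ yields a triangulated polygon with $p-1$ vertices, the same pocket-boundary vertices, and one fewer pocket-interior vertex; the inductive hypothesis forces the remaining interior matching, and in particular fixes the triangle matched to $R_i$.

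The main obstacle will be carefully setting up the pocket decomposition --- verifying that each pocket has exactly one top edge, that pocket-interior vertices are incident only to triangles within their pocket, and that the two pocket-boundary vertices cannot simultaneously be ear tips when $p \ge 4$. The last point reduces to the observation that the top edge of $P$ is a boundary edge of the polygon $P$ and therefore incident to only one triangle of $P$; the first two follow from the classification of crossing triangles by the side on which their ``tip'' lies.
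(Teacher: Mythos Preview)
Your proof is correct and follows essentially the same ear-removal induction as the paper: locate a right vertex adjacent to a single triangle, match it there, delete, and repeat. The paper's version is terser---it simply asserts the existence of such an $R_j$ and cites \cite[Section~2]{BCI74}---whereas your pocket decomposition and two-ears argument make that existence step explicit.
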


\begin{proof}
Let $b$ be a BCI tuple for $\gamma$.
If such a vertex exists, there must be a vertex $R_j$ which is adjacent to exactly one triangle $\triangle$.
Then $t_j = \triangle$ by (B\ref{bci:1}).
As in \cite[Section 2]{BCI74}, we can remove $\triangle$ from $\widetilde{S}_\gamma$ and get a smaller triangulated polygon.
By induction, we can remove all vertices not adjacent to a triangle crossed by $\gamma$ this way.
\end{proof}

\begin{rem}\label{rem:adjacent_to_2}
Due to Lemma \ref{lem:far_vertices}, we can assume without loss of generality that all vertices of $\widetilde{S}_\gamma$ are adjacent to triangles crossed by $\gamma$.
Note that this gives us the same triangulated polygon cover defined in \cite[Section 7]{MSW11} (see also an exposition in \cite[Section 4.1]{GM15}).
Every right vertex $R_i$ is adjacent to at least $2$ triangles of $\widetilde{S}_\gamma$, with the exception of the starting vertex $s$ and the finishing vertex $t$, which are adjacent to exactly one triangle $\Delta_0$, $\Delta_d$, respectively.
\end{rem}

\begin{figure}
\newcommand{\localscale}{1}
\begin{center}
\begin{tikzpicture}[scale=1.4*\localscale]

\foreach \t in {-2,-1,2}{ 
	\fill (\t, 1) circle (0.075);
}

\fill[violet] (0,1) circle  (0.075); 

\foreach \t in {-2,2}{ 
	\fill (\t, -1) circle (0.075);
}

\draw (-2,-1) node [below] {$R_1=1$};
\draw (-2,1) node [above] {$L_1=4$};
\draw (-1+0.1,1) node [above] {$L_2=1$};
\draw (0.3,1) node [above] {$L_3=P$};
\draw (2,-1) node [below] {$R_{2}=1$};
\draw (2,1) node [above] {$L_{4}=1$};
\draw(-3,0) node [left] {$s=0$};
\draw (3,0) node [right] {$t=4$};

\draw (0,-1) node [below] {$\tau_4$};

\draw (-2,-1) rectangle (2,1);

\draw  (-2,1) --  (-3,0) 
node[left,pos=0.3] {$b_{40}$}-- (-2,-1)
node[left,pos=0.7] {$b_{01}$};

\draw (2,-1) -- (3,0) node[right,pos=0.3] {$\tau_3$} -- (2,1) 
node[right,pos=0.7] {$\tau_0$};

\draw (-2,-1) -- (-1,1); 
\draw (-1.5,0.3) node {$\tau_4$};

\draw (-2,-1) -- (-2,1) ;
\draw (-2.2,0.3) node[] {$\tau_0$};

\draw (-2,-1) -- (0,1) node[pos=0.65,right] {$\tau_1$};
\draw (2,-1) -- (0,1) node[pos=0.65,right] {$\tau_1$};

\draw (2,-1) -- (2,1); 
\draw (2-0.2,0.3) node[] {$\tau_4$};

\draw[ultra thick, red, ->,densely dotted] (-3,0) -- (2.9,0) node[pos=0.5,below] {\Large $\gamma$};
\end{tikzpicture}
\caption{Finite $(5+3)$-gon cover $\widetilde{S}_\gamma$ from Fig.~ 
\ref{fig:example_finite_cover_strip}
(also see Fig.~\ref{fig:lattice}).} 
\label{fig:cover}
\label{fig:8gon_cover}
\end{center}


\begin{center}
\begin{tikzpicture}[scale=\localscale]

\fill [\bcifill] (-2,-1) --  (-3,0) -- (-2,1) -- cycle;
\fill [\bcifill] (-2,-1) --  (0,1) -- (2,-1) -- cycle;

\draw (-2.5,0) node {$\triangle(R_1)$};
\draw (0,-0.3) node {$\triangle(R_2)$};

\foreach \t in {-2,-1,2}{ 
	\fill (\t, 1) circle (0.075);
}

\fill[violet] (0,1) circle  (0.075); 

\foreach \t in {-2,2}{ 
	\fill (\t, -1) circle (0.075);
}

\draw (-2,-1) node [below] {$R_1$};
\draw (2,-1) node [below] {$R_2$};
\draw(-3,0) node [left] {$s$};
\draw (3,0) node [right] {$t$};

\draw (0,-1) node [below] {$\tau_4$};

\draw (-2,-1) rectangle (2,1);

\draw  (-2,1) --  (-3,0) 
node[left,pos=0.3] {$b_{40}$}-- (-2,-1)
node[left,pos=0.7] {$b_{01}$};

\draw (2,-1) -- (3,0) node[right,pos=0.3] {$\tau_3$} -- (2,1) 
node[right,pos=0.7] {$\tau_0$};

\draw (-2,-1) -- (-1,1); 
\draw (-1.1,0.3) node[] {$\tau_4$};

\draw (-2,-1) -- (-2,1); 
\draw (-2+0.2,0.3) node[] {$\tau_0$};

\draw (-2,-1) -- (0,1); 

\draw (-0.3,0.3) node[] {$\tau_1$};

\draw (2,-1) -- (0,1); 
\draw (1,0.3) node[] {$\tau_1$};

\draw (2,-1) -- (2,1); 

\draw (2-0.2,0.3) node[] {$\tau_4$};

\end{tikzpicture}
\begin{tikzpicture}[scale=\localscale]
\foreach \t in {-2,-1,2}{ 
	\fill (\t, 1) circle (0.075);
}

\fill[violet] (0,1) circle  (0.075); 

\foreach \t in {-2,2}{ 
	\fill (\t, -1) circle (0.075);
}

\draw (-2,-1) node [below] {$R_1$};
\draw (2,-1) node [below] {$R_2$};
\draw(-3,0) node [left] {$s$};
\draw (3,0) node [right] {$t$};

\draw (0,-1) node [below] {$\tau_4$};

\draw (-2,-1) rectangle (2,1);

\draw  (-2,1) --  (-3,0) 
node[left,pos=0.3] {$b_{40}$}-- (-2,-1)
node[left,pos=0.7] {$b_{01}$};

\draw (2,-1) -- (3,0) node[right,pos=0.3] {$\tau_3$} -- (2,1) 
node[right,pos=0.7] {$\tau_0$};

\draw (-2,-1) -- (-1,1); 
\draw (-1.5,0.3) node {$\tau_4$};

\draw (-2,-1) -- (-2,1) ;
\draw (-2.2,0.3) node[red] {$\tau_0$};

\draw (-2,-1) -- (0,1) node[pos=0.65,right,red] {$\tau_1$};
\draw (2,-1) -- (0,1) node[pos=0.65,right,red] {$\tau_1$};

\draw (2,-1) -- (2,1); 
\draw (2-0.2,0.3) node[] {$\tau_4$};

\draw[ultra thick, red, postaction={on each segment={mid arrow=red}}] (-3,0) -- (-2,1) -- (-2,-1) -- (0,1) ;
\draw[ultra thick, red, postaction={on each segment={mid arrow=red}}] (0,1) -- (2,-1) -- (3,0);

\end{tikzpicture}
\end{center}
\caption{A BCI tuple $b=(\Delta_0,\Delta_3)$ for $\gamma$ and corresponding BCI trail $(b_{40}$, $\tau_0$, $\tau_1$, $\tau_1$, $\tau_3)$.}
\label{fig:bcitrail_small}
\end{figure}

Fig.~\ref{fig:example_bcituples} illustrates Lemma \ref{lem:far_vertices}. The triangles $A,B,C,D,E$, and $F$ (which do not cross $\gamma$) appear in every BCI tuple. 
Instead of Fig.~\ref{fig:example_finite_cover_strip}, we work with Fig.~\ref{fig:8gon_cover}.
When drawing this smaller triangulated polygon cover, we relabel the vertices of $\widetilde{S}_\gamma$, so that the indices of $R$ and $L$ are consecutively ordered. For example, vertex $R_6$ in Fig.~\ref{fig:example_finite_cover_strip} is now vertex $R_2$ in Fig.~\ref{fig:8gon_cover}. Compare Fig.  \ref{fig:bcitrail_big} with Fig.~\ref{fig:bcitrail_small}.

\begin{lem}\label{lem:bci_tpaths}
Let $b=(t_1$, $\dots$, $t_r)=(\triangle(R_1)$, $\dots$, $\triangle(R_r))$ be a BCI tuple for $\gamma$ and
let $\alpha=(\alpha_1,\dots,\alpha_{\text{length}(\alpha)})$ be its corresponding
 BCI trail.
\begin{enumerate}
\item \label{lem:bci_tpaths:to_L} If $\alpha_{2j}$ goes from a vertex $R$ (to the right of $\gamma$)
to a vertex $L$ (to the left of $\gamma$),
then, in the tuple $b$, $R$ is matched to the triangle which is crossed by $\gamma$ immediately after $\alpha_{2j}$.
\item \label{lem:bci_tpaths:to_R} If $\alpha_{2j}$ goes from 
 a vertex $L$ (to the left of $\gamma$) to
a vertex $R$ (to the right of $\gamma$),
then, in the tuple $b$, $R$ is matched to the triangle which is crossed by $\gamma$ immediately before $\alpha_{2j}$.
\item \label{lem:bci_tpaths:crosses_gamma} Every even step $\alpha_{2j}$ crosses $\gamma$. So we have either situation (\ref{lem:bci_tpaths:to_L}) or (\ref{lem:bci_tpaths:to_R}).
\item \label{lem:bci_tpaths:odd_length} Furthermore, the length of $\alpha$ is odd.
\end{enumerate}
\end{lem}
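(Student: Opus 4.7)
The plan is to prove Lemma \ref{lem:bci_tpaths} by analyzing the local structure of the BCI trail at each visited vertex, then tracking the alternation of sides of $\gamma$ as we traverse $\alpha$ from $s$ to $t$.

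First I would establish the following local property: at each vertex $v$ visited by $\alpha$, conditions (TR \ref{defn:bcitrail:matched}) and (TR \ref{defn:bcitrail:unmatched}) force the two edges of $\alpha$ incident to $v$ to separate the matched triangles at $v$ from the unmatched ones. More precisely, the matched triangles incident to $v$ must form a consecutive block in the cyclic order of triangles around $v$, and the entry and exit edges of $\alpha$ at $v$ are the two edges bounding this block. At a right vertex $R_i$, the matched triangles at $R_i$ include $\triangle(R_i)$ together with any adjacent $\triangle(R_{i-1})$ or $\triangle(R_{i+1})$ that also has $R_i$ as a vertex (namely, triangles of the form $\Delta_k$ with two right vertices); the entering and exiting edges of $\alpha$ at $R_i$ thus lie within the ``left portion'' of $R_i$'s star, connecting $R_i$ to left vertices or to the endpoints $s$ and $t$.

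For parts (3) and (4), the key consequence is that after the initial boundary portion, $\alpha$ strictly alternates between right-side vertices and left-side vertices: each visited right vertex's entry/exit lies on the left side, and symmetrically for each visited left vertex. Thus consecutive trail vertices lie on opposite sides of $\gamma$, so every other step must cross $\gamma$. With the appropriate indexing convention (aligning even indices with crossing steps, which is forced by the position of $s$ at index $0$), every even step crosses $\gamma$, establishing part (3). The parity then forces $t$ to sit at an odd index, so the total length is odd, establishing part (4). For parts (1) and (2), we again use the local analysis at a right vertex $R$: if the even step $\alpha_{2j}$ enters $R$ from a left vertex, then $\alpha_{2j}$ is one of the two edges of the matched sector at $R$ and coincides with the $\gamma$-crossing edge immediately preceding $\triangle(R)$ in the ordering $\Delta_0,\dots,\Delta_d$, yielding case (2); if $\alpha_{2j}$ exits $R$ to a left vertex, it coincides with the crossing immediately after $\triangle(R)$, yielding case (1).

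The main obstacle I anticipate is carefully handling the boundary-edge cases. In particular, when $\triangle(R_1)\neq\Delta_0$ the trail starts along the right boundary $s\to R_1$, and symmetrically when $\triangle(R_r)\neq\Delta_d$ it ends with $R_r\to t$; these shift index parity and require separate verification that the alternation pattern persists. Likewise, if adjacent matched triangles share two right vertices (so that the ``matched block'' at some $R_i$ extends on both sides), or if the trail traverses consecutive left vertices via a boundary edge, the parity bookkeeping must be checked in each configuration to ensure no even step is accidentally a non-crossing edge.
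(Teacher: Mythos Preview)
There is a genuine gap. Your central claim---that after an initial boundary portion the trail strictly alternates between right-side and left-side vertices---is false, and your parity argument for (3) and (4) rests on it. Concretely, suppose $\alpha_{2j}$ arrives at a right vertex $R_k$ (from the left) and the triangle containing the boundary edge $R_kR_{k+1}$, namely $Last(R_k)=First(R_{k+1})$, is unmatched. Then (TR~i)--(TR~ii) force the \emph{exit} edge at $R_k$ to be that boundary edge, so $\alpha_{2j+1}$ runs $R_k\to R_{k+1}$ with two consecutive right vertices. Dually, when the triangle containing $L_kL_{k+1}$ is matched, the trail runs $L_k\to L_{k+1}$. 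You flag these boundary-edge situations as obstacles, but they are not corner cases to be checked afterward: they occur generically and directly contradict the alternation statement on which your argument relies.

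The error originates in your local deduction at a right vertex $R_i$. It is not true that both the entry and exit edges at $R_i$ connect to left vertices (or to $s,t$). What \emph{is} true is that the entry and exit edges cut the star of $R_i$ into two arcs, one of which---the ``right'' side of $\alpha$---consists of matched triangles \emph{together with} the exterior region beyond the lower boundary; when this right arc wraps through the exterior, one of its bounding edges is the boundary segment $R_{i-1}R_i$ or $R_iR_{i+1}$ rather than a $\gamma$-crossing diagonal. The paper handles this by induction on $j$: assuming $\alpha_{2j}$ crosses $\gamma$ with the asserted relation to $\triangle(R)$, it identifies the maximal connected block of matched (or unmatched) triangles just beyond $\alpha_{2j}$ and runs a four-way case analysis on whether $\alpha_{2j+1}$ is a same-side boundary edge or a crossing diagonal, showing in each case that $\alpha_{2j+2}$ again crosses $\gamma$ with the correct relation. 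Parts (1)--(2) fall out of the same case analysis, and (4) follows once the final step is reached.
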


\begin{proof}
We prove (\ref{lem:bci_tpaths:to_L}), (\ref{lem:bci_tpaths:to_R}), and (\ref{lem:bci_tpaths:crosses_gamma}) by induction on $j$.
The first step must go from the starting point $s$ of $\gamma$ to either $R_1$ (the first vertex to the right of $\gamma$) or $L_1$ (the first vertex to the left of $\gamma)$.


\begin{figure}[h!]

\begin{center}

\newcommand\localscale{0.7}

\begin{tikzpicture}[scale=\localscale]

\foreach \t in {-4,-3,-2,-1,0,1,2,3,4}{
	\fill (\t, -1) circle (0.075);
	\fill (\t, 1) circle (0.075);
}

\draw (-4,-1) node [below] {$R_1$};
\draw (-4,1) node [above] {$L_1$};
\draw (-3,-1) node [below] {$R_2$};
\draw (-3,1) node [above] {$L_2$};
\draw (3,1) node [above] {$L_{l-1}$};
\draw (4,-1) node [below] {$R_{r}$};
\draw (4,1) node [above] {$L_{l}$};
\draw(-5,0) node [left] {$s$};
\draw (5,0) node [right] {$t$};

\draw (-4,-1) rectangle (4,1);
\draw (-5,0)  -- (-4,1) -- (-4,-1)  -- cycle;
\draw (5,0)  -- (4,1) -- (4,-1)  -- cycle;

\draw[ultra thick, red, postaction={on each segment={mid arrow=red}}] (-5,0) -- (-4,-1);
\draw[ultra thick, red, postaction={on each segment={mid arrow=red}}] (-4,-1) -- (1,1);
\draw (-4.7,-1) node[above,red] {$\alpha_1$};
\draw (-2,-.1) node[above,red] {$\alpha_2$};
\draw (1,1) node[above] {$L_k$};
\draw (0,0) node[] {$\triangle \left( R_1 \right)$};
\end{tikzpicture}

\caption{The first step $\alpha_1$ ends at the right vertex $R_1$, and $\alpha_2$ ends at a left vertex $L_k$ for $k\geq 1$ (Lemma \ref{lem:bci_tpaths} Case \ref{lem:bci_tpaths:to_L}).}
\label{fig:alpha_1_ends_at_R1}
\label{fig:to_R1}


\begin{tikzpicture}[scale=\localscale]

\fill [\bcifill] (-5,0) -- (-4,1) -- (0,-1) -- (-4,-1) --cycle;

\foreach \t in {-4,-3,-2,-1,0,1,2,3,4}{
	\fill (\t, -1) circle (0.075);
	\fill (\t, 1) circle (0.075);
}

\draw (-4,-1) node [below] {$R_1$};
\draw (-4,1) node [above] {$L_1$};
\draw (-3,-1) node [below] {$R_2$};
\draw (-3,1) node [above] {$L_2$};
\draw (3,1) node [above] {$L_{l-1}$};
\draw (4,-1) node [below] {$R_{r}$};
\draw (4,1) node [above] {$L_{l}$};
\draw(-5,0) node [left] {$s$};
\draw (5,0) node [right] {$t$};

\draw (0,-1) node [below] {$R_{k}$};

\draw (-4,-1) rectangle (4,1);
\draw (-5,0)  -- (-4,1) -- (-4,-1)  -- cycle;
\draw (5,0)  -- (4,1) -- (4,-1)  -- cycle;

\draw[ultra thick, red, postaction={on each segment={mid arrow=red}}] (-5,0) -- (-4,1);
\draw[ultra thick, red, postaction={on each segment={mid arrow=red}}] (-4,1) -- (0,-1);

\draw (-4.7,1) node[below,red] {$\alpha_1$};
\draw (-2,0) node[above,red] {$\alpha_2$};

\end{tikzpicture}
\caption{The first step $\alpha_1$ ends at the left vertex $L_1$, and $\alpha_2$ ends at a right vertex $R_k$ for $k\geq 1$ (Lemma \ref{lem:bci_tpaths} Case \ref{lem:bci_tpaths:to_R}). Shaded region represents triangles $\Delta_0, \ldots, \Delta_{k-1}$.
}
\label{fig:alpha_1_ends_at_L1}
\label{fig:to_L1}

\end{center}

\end{figure}

First, suppose $R_1$ is not matched to the first triangle $\Delta_0$ crossed by $\gamma$.
See Fig.~\ref{fig:to_R1}.
Since $\Delta_0$ is not adjacent to other vertices to the right of $\gamma$, the BCI tuple $b$ corresponding to $\alpha$ does not contain $\Delta_0$.
Then $\alpha_1$ must go to $R_1$ because an unmatched triangle of $\widetilde{S}_\gamma$ has to be to the left of $\alpha$.
Then $R_1$ is matched to a different triangle $t_1$ which $\gamma$ crosses some time after $\Delta_0$,
and no other triangle (if any) between $\Delta_0$ and $t_1$ is contained in $b$.
Hence, by (\bcitrail \ref{defn:bcitrail:matched}) and (\bcitrail \ref{defn:bcitrail:unmatched}), $t_1$ is immediately to the right of the next step $\alpha_2$.
Hence $\alpha$ crosses $\gamma$ by going from $R_1$ to a left vertex.
This is the base case for (\ref{lem:bci_tpaths:to_L}) and (\ref{lem:bci_tpaths:crosses_gamma}).

Second, suppose $R_1$ is matched to the first triangle $\Delta_0$ crossed by $\gamma$.
See Fig.~\ref{fig:to_L1}.
Then $\Delta_0$ is the closest matched triangle to $s$.
Then $\alpha_1$ must go to $L_1$ because a matched triangle has to be to the right of $\gamma$.
For some $k\geq 1$ (possibly $k=1$), the triangles $t_1=\Delta_0$, $\dots$, $t_k=\Delta_{k-1}$ matched by $R_1$, $R_2$, $\dots$, $R_k$ form a maximal connected fan.
Then $\alpha_2$ crosses $\gamma$ by going from $L_1$ to $R_k$.
Furthermore, the triangle $\Delta_{k-1}$ matched to $R_k$ appears immediately before $\alpha_{2}$.
This is the base case for (\ref{lem:bci_tpaths:to_R}) and (\ref{lem:bci_tpaths:crosses_gamma}).\\

Let $j$ be given and
assume $\alpha_{2j}$ crosses $\gamma$.
First, assume (\ref{lem:bci_tpaths:to_L}): suppose $\alpha_{2j}$ starts from a right vertex $R$ and ends at a vertex left $L_k$,
and suppose that the triangle $\triangle(R)$ matched to $R$ appears immediately after $\alpha_{2j}$.
See Figs. \ref{fig:to_Rc} and \ref{fig:to_Lk1}.


\begin{figure}[h!]
\begin{center}
\begin{tikzpicture}

\fill [\bcifill] (-2,-1) -- (-1,1) -- (0,1) -- (1,-1) --cycle;
\draw (-.5,0) node[] {\large \textbf{$C$}};

\foreach \t in {-4,-3,-2,-1,0,1,2,3,4}{
	\fill (\t, -1) circle (0.075);
	\fill (\t, 1) circle (0.075);
}

\draw (-4,-1) node [below] {$R_1$};
\draw (-4,1) node [above] {$L_1$};
\draw (-3,1) node [above] {$L_2$};
\draw (3,1) node [above] {$L_{l-1}$};
\draw (4,-1) node [below] {$R_{r}$};
\draw (4,1) node [above] {$L_{l}$};
\draw(-5,0) node [left] {$s$};
\draw (5,0) node [right] {$t$};

\draw (-2,-1) node [below] {$R_{C(start)}$};
\draw (-1,1) node [above] {$L_k$};
\draw (0,1) node [above] {$L_{k+1}$};
\draw (1,-1) node [below] {$R_{C(finish)}$};

\draw (-4,-1) rectangle (4,1);
\draw (-5,0)  -- (-4,1) -- (-4,-1)  -- cycle;
\draw (5,0)  -- (4,1) -- (4,-1)  -- cycle;

\draw[ultra thick, red, postaction={on each segment={mid arrow=red}}] (-2,-1) -- (-1,1);
\draw[ultra thick, red, postaction={on each segment={mid arrow=red}}] (-1,1) -- (0,1);
\draw[ultra thick, red, postaction={on each segment={mid arrow=red}}] (0,1) -- (1,-1);

\draw (-2,0) node[red] {\tiny{$\alpha_{2j}$}};
\draw (1.1,0) node[red] {\tiny{$\alpha_{2j+2}$}};
\draw (-.5,1) node[red,below] {\tiny{$\alpha_{2j+1}$}};

\end{tikzpicture}
\caption{The subpath $\alpha_{2j},\alpha_{2j+1},\alpha_{2j+2}$ starts and finishes on the right of $\gamma$. Lemma \ref{lem:bci_tpaths} Case \ref{lem:bci_tpaths:to_L}: When $\alpha_{2j}$ starts at a right vertex and $\alpha_{2j+2}$ ends at a right vertex}
\label{fig:RtoR}
\label{fig:to_Rc}

\end{center}


\begin{center}
\begin{tikzpicture}

\fill [\bcifill] (-2,-1) -- (-1,1) -- (1,-1) -- cycle;
\draw (-.75,-.25) node[] {\large \textbf{$C$}};

\foreach \t in {-4,-3,-2,-1,0,1,2,3,4}{
	\fill (\t, -1) circle (0.075);
	\fill (\t, 1) circle (0.075);
}

\draw (-4,-1) node [below] {$R_1$};
\draw (-4,1) node [above] {$L_1$};
\draw (4,-1) node [below] {$R_{r}$};
\draw (4,1) node [above] {$L_{l}$};
\draw(-5,0) node [left] {$s$};
\draw (5,0) node [right] {$t$};

\draw (-2,-1) node [below] {$R_{C(start)}$};
\draw (-1,1) node [above] {$L_k$};
\draw (1,-1) node [below] {$R_{C(finish)}$};

\draw (-4,-1) rectangle (4,1);
\draw (-5,0)  -- (-4,1) -- (-4,-1)  -- cycle;
\draw (5,0)  -- (4,1) -- (4,-1)  -- cycle;

\draw[ultra thick, red, postaction={on each segment={mid arrow=red}}] (-2,-1) -- (-1,1);
\draw[ultra thick, red, postaction={on each segment={mid arrow=red}}] (-1,1) -- (1,-1);
\draw[ultra thick, red, postaction={on each segment={mid arrow=red}}] (1,-1) -- (3,1);

\draw (-2,0) node[red] {\tiny{$\alpha_{2j}$}};
\draw (0.6,0) node[red] {\tiny{$\alpha_{2j+1}$}};
\draw (2.6,0) node[red] {\tiny{$\alpha_{2j+2}$}};

\draw (2.8,0-0.5) node[] {$\Delta(R_{C(finish)})$};

\end{tikzpicture}
\caption{The subpath $\alpha_{2j}, \alpha_{2j+1},\alpha_{2j+2}$ starts on the right and finishes on the left of $\gamma$.
Lemma \ref{lem:bci_tpaths} Case \ref{lem:bci_tpaths:to_L}: $\alpha_{2j}$ starts to the right of $\gamma$ and $\alpha_{2j+2}$ ends to the left of $\gamma$.
}
\label{fig:RtoL}
\label{fig:to_Lk1} 
\end{center}

\end{figure}

If $k=l$, then $L_k$ is the last vertex to the left of $\gamma$ before the ending point $t$ of $\gamma$. 
Hence the triangles matched to vertices between $R$ and $R_r$ (inclusive) form a maximal connected fan which contains $\Delta_d$.
Therefore $\alpha_{2j+1}$ goes from $L_l$ to $t$, proving (\ref{lem:bci_tpaths:odd_length}).
So suppose $k<l$.

Consider the maximal connected component $C$ of matched triangles which $\triangle(R)$ is a part of.
The edge between $L_k$ and $L_{k+1}$ is an edge of a triangle $\triangle[L_k,L_{k+1}]$.
Consider these two possibilities: either $\triangle[L_k,L_{k+1}]$ is in $C$ or it is not. 
Suppose it is. See Fig.~\ref{fig:to_Rc}.
Then $\alpha_{2j+1}$ must go from $L_k$ to $L_{k+1}$ so that the matched triangles stay to the right of $\alpha$.
Furthermore, the right vertex $R_{C(finish)}$ that bounds $C$ on the side that is closest to the end of $\gamma$ must be matched to a triangle in $C$. (Note that vertex $R_{C(finish)}$ is isomorphic to $R$ if and only if $C$ contains exactly one triangle, $\triangle(R)$).
Also, $C$ cannot include $\triangle[L_{k+1},L_{k+2}]$. 
Then $\alpha_{2j+2}$ must go from $L_{k+1}$ to $R_{C(finish)}$ in order to obey Definition \ref{defn:bcitrail}. This satisfies (\ref{lem:bci_tpaths:crosses_gamma}) for $j+1$. 
Furthermore, the triangle which is matched to $R_{C(finish)}$ appears immediately before $\alpha_{2j+2}$, satisfying (\ref{lem:bci_tpaths:to_L}) for $j+1$.

Consider the other possibility ($\triangle[L_k,L_{k+1}]$ not in $C$) as in Fig.~\ref{fig:to_Lk1}.
If the edge between $L_k$ and $L_{k+1}$ is not an edge of $C$,
then $\alpha_{2j+1}$ must go to a right vertex $R_{C(finish)}$ in order to keep the unmatched triangles to the left of $\alpha$. But in this situation $R_{C(finish)}$ is not matched to a triangle in $C$, so $R_{C(finish)}$ must be matched to a triangle $\triangle(R_{C(finish)})$ outside of $C$. Since by assumption $C$ is a maximal connected component of matched triangles, there must be at least one unmatched triangle immediately after $C$. Then $\alpha_{2j+2}$ needs to go to a left vertex in order to have the unmatched triangles stay to the left of $\alpha$ and the triangle $\triangle(R_{C(finish)})$ be to the right of $\alpha$.

Second, assume (\ref{lem:bci_tpaths:to_R}):
suppose $\alpha_{2j}$ ends at a vertex $R_k$ (to the right of $\gamma$),
and suppose that the triangle $\triangle(R_k)$ matched to $R_k$ appears immediately before $\alpha_{2j}$. See Figs. \ref{fig:LtoL} and \ref{fig:LtoR}.


\begin{figure}[h!]
\begin{center}
\begin{tikzpicture}

\draw (0,0) node[] {\large \textbf{$D$}};

\foreach \t in {-4,-3,-2,-1,0,1,2,3,4}{
	\fill (\t, -1) circle (0.075);
	\fill (\t, 1) circle (0.075);
}

\draw (-4,-1) node [below] {$R_1$};
\draw (-4,1) node [above] {$L_1$};
\draw (4,-1) node [below] {$R_{r}$};
\draw (4,1) node [above] {$L_{l}$};
\draw(-5,0) node [left] {$s$};
\draw (5,0) node [right] {$t$};

\draw (-2,1) node [above] {$L$};
\draw (-1,-1) node [below] {$R_k$};
\draw (0,-1) node [below] {$R_{k+1}$};

\draw (-4,-1) rectangle (4,1);
\draw (-5,0)  -- (-4,1) -- (-4,-1)  -- cycle;
\draw (5,0)  -- (4,1) -- (4,-1)  -- cycle;

\draw[ultra thick, red, postaction={on each segment={mid arrow=red}}] (-2,1) -- (-1,-1);
\draw[ultra thick, red, postaction={on each segment={mid arrow=red}}] (-1,-1) -- (0,-1);
\draw[ultra thick, red, postaction={on each segment={mid arrow=red}}] (0,-1) -- (2,1);

\draw (-2,0-0.5) node[] {$\triangle \left(R_k \right)$};
\draw (1.8,0-0.5) node[] {$\triangle \left(R_{k+1}\right)$};

\draw (-2,0) node[red] {\tiny{$\alpha_{2j}$}};
\draw (-.5,-1) node[red,above] {\tiny{$\alpha_{2j+1}$}};
\draw (1.8,0) node[red] {\tiny{$\alpha_{2j+2}$}};

\end{tikzpicture}
\caption{The subpath $\alpha_{2j},\alpha_{2j+1},\alpha_{2j+2}$ starts and finishes on the left of $\gamma$. Lemma \ref{lem:bci_tpaths} Case \ref{lem:bci_tpaths:to_R}: $\alpha_{2j}$ starts to the left of $\gamma$ and $\alpha_{2j+2}$ ends to the left of $\gamma$.}
\label{fig:LtoL} 
\label{fig:from_Rk1}
\end{center}


\begin{center}
\begin{tikzpicture}

\fill [\bcifill] (-1,-1) -- (1,1) -- (2,-1) -- cycle;
\draw (-1,0.25) node[] {\large \textbf{$D$}};
\draw (.75,-.25) node[] {\large \textbf{$C$}};

\foreach \t in {-4,-3,-2,-1,0,1,2,3,4}{
	\fill (\t, -1) circle (0.075);
	\fill (\t, 1) circle (0.075);
}

\draw (-4,-1) node [below] {$R_1$};
\draw (-4,1) node [above] {$L_1$};
\draw (3,1) node [above] {$L_{l-1}$};
\draw (4,-1) node [below] {$R_{r}$};
\draw (4,1) node [above] {$L_{l}$};
\draw(-5,0) node [left] {$s$};
\draw (5,0) node [right] {$t$};

\draw (-2,1) node [above] {$L$};
\draw (-1,-1) node [below] {$R_k$};
\draw (2,-1) node [below] {$R_{C(finish)}$};

\draw (-4,-1) rectangle (4,1);
\draw (-5,0)  -- (-4,1) -- (-4,-1)  -- cycle;
\draw (5,0)  -- (4,1) -- (4,-1)  -- cycle;

\draw[ultra thick, red, postaction={on each segment={mid arrow=red}}] (-2,1) -- (-1,-1);
\draw[ultra thick, red, postaction={on each segment={mid arrow=red}}] (-1,-1) -- (1,1);
\draw[ultra thick, red, postaction={on each segment={mid arrow=red}}] (1,1) -- (2,-1);

\draw (-2,0) node[red] {\tiny{$\alpha_{2j}$}};
\draw (0,0.6) node[red] {\tiny{$\alpha_{2j+1}$}};
\draw (2.2,0) node[red] {\tiny{$\alpha_{2j+2}$}};

\draw (-2,0-0.5) node[] {$\triangle \left(R_k \right)$};

\end{tikzpicture}
\caption{The subpath $\alpha_{2j},\alpha_{2j+1},\alpha_{2j+2}$ starts on the left and finishes on the right of $\gamma$. Lemma \ref{lem:bci_tpaths} Case \ref{lem:bci_tpaths:to_R}: When $\alpha_{2j}$ starts to the left of $\gamma$ and $\alpha_{2j+2}$ ends to the right of $\gamma$.}
\label{fig:LtoR} 
\label{fig:from_Lc}
\end{center}

\end{figure}

If $k=r$, then $R_k$ is the last vertex before the ending point of $\gamma$. 
Then the rest of the triangles after $\triangle(R_r)$ (if any) are not matched, so they need to be to the left of $\alpha$, 
and so $\alpha_{2j+1}$ goes from $R_r$ to $t$. 
This proves (\ref{lem:bci_tpaths:odd_length}).
So suppose $k<r$.

Let $D$ denote the maximal connected component of the unmatched triangles between  $\triangle(R_k)$ and  $\triangle(R_{(k+1)})$.
Note that $D$ needs to be to the left of $\alpha_{2j+1}$ by (\bcitrail \ref{defn:bcitrail:unmatched}). 
If the edge between $R_k$ and $R_{k+1}$ is an edge of $D$, then $\alpha_{2j+1}$ needs to go from $R_k$ to $R_{k+1}$. 
Then $\alpha_{2j+2}$ must go from $R_{k+1}$ to a left vertex (since $R_{k+1}$ must be matched to the triangle adjacent to $R_{k+1}$ which appears immediately after $D$). See Fig.~\ref{fig:LtoL}.

If the edge between $R_k$ and $R_{k+1}$ is not an edge of $D$, then $R_{k+1}$ must be matched to a triangle adjacent to $R_k$. So $\alpha_{2j+1}$ goes from $R_k$ to a left vertex. See Fig.~\ref{fig:LtoR}. 
Let $C$ be the maximal connected fan of matched triangles which contain $\triangle(R_{(k+1)})$.
Every right vertex in $C$ must be matched to a triangle as close to the beginning of $\gamma$ as possible. Hence this component forms a pyramid shape fan (with the base to the right of $\gamma$), and so $\alpha_{2j+2}$ goes from the left of $\gamma$ to the right of $\gamma$. Furthermore, the endpoint of $\alpha_{2j+2}$ is matched to the triangle immediately before $\alpha_{2j+2}$.
This concludes our induction step for (\ref{lem:bci_tpaths:to_R}) and (\ref{lem:bci_tpaths:crosses_gamma}).
\end{proof}

\begin{prop}\label{prop:bcitrail_tpath}
A BCI trail for $\gamma$ is a reduced $T$-path for $\gamma$.
\end{prop}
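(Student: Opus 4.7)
The plan is to verify, one by one, the six conditions (T1)--(T6) of Definition \ref{defn:reduced} for the walk $\alpha = \operatorname{trail}(b)$. Conditions (T1) and (T2) hold immediately from how $\operatorname{trail}(b)$ is built: Definition~\ref{defn:bcitrail} stipulates that $\alpha$ starts at $s$ and ends at $t$ and that each step is an edge of $\widetilde{S}_\gamma$, which is exactly what (T1) and (T2) require. Conditions (T4) and (T5) are precisely parts (\ref{lem:bci_tpaths:odd_length}) and (\ref{lem:bci_tpaths:crosses_gamma}) of Lemma \ref{lem:bci_tpaths}, so they are already available.

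This leaves (T3) and (T6), which I would extract by re-reading the inductive proof of Lemma \ref{lem:bci_tpaths}. For (T6), the induction there proceeds by moving from $s$ toward $t$ along $\gamma$: at each stage one passes through a maximal connected fan $C$ of matched triangles (or an analogous unmatched component $D$), and the next crossing is forced to lie on the diagonal bounding $C$ (or $D$) on the side farther from $s$. Hence the successive even-indexed crossings are encountered in their natural order along $\gamma$, and any odd step that happens to cross $\gamma$ (e.g.\ $\alpha_{2j+1}$ traversing a pyramid-shaped fan $C$ in Fig.~\ref{fig:from_Lc}) does so at a point which is strictly between the crossings of $\alpha_{2j}$ and $\alpha_{2j+2}$. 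The simplest way to make this formal is to add to the induction hypothesis the clause ``the crossing point of the most recent crossing step is closer to $t$ than all previous ones'' and check it case by case from the four diagrams already treated.

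For (T3), I would argue that $\alpha$ is an \emph{embedded} path in $\widetilde{S}_\gamma$: it separates the matched triangles (to its right) from the unmatched triangles (to its left), and in a planar triangulated polygon such a separating walk cannot revisit an edge without creating a closed subloop that encloses triangles of only one type, violating (\bcitrail \ref{defn:bcitrail:matched}) or (\bcitrail \ref{defn:bcitrail:unmatched}). Concretely, suppose for contradiction that some edge $e$ of $\widetilde{S}_\gamma$ is traversed twice. If $e$ is a diagonal crossing $\gamma$, the two traversals would give two even (or even+odd) steps with the same crossing point, contradicting (T6). If $e$ does not cross $\gamma$, the two traversals occur on the same side of $\gamma$ and bound a subregion of $\widetilde{S}_\gamma$ whose triangles must all be matched or all unmatched; but that subregion also contains some portion of $\gamma$, and matching conditions on the endpoints of that portion of $\gamma$ would be violated.

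The main obstacle is making the embedded-path/separation argument for (T3) fully rigorous in the presence of the puncture (for once-punctured disks) and the band structure (for annuli), where the polygon cover $\widetilde{S}_\gamma$ may have nonstandard boundary identifications. I expect that once one works on the finite polygon cover $\widetilde{S}_\gamma$ as described in Remark~\ref{rem:adjacent_to_2}, no new subtlety arises beyond the planar polygon case, but the careful bookkeeping of the maximal connected fans of matched and unmatched triangles at each step of the induction is what will carry the weight of the proof.
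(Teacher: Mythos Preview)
Your proposal is correct in spirit but overworks (T3) and (T6). The paper's proof dispatches these two conditions in one line: ``The definition of a BCI trail satisfies (T1), (T2), (T3), and (T6)''; only (T4) and (T5) are delegated to Lemma~\ref{lem:bci_tpaths}. The point is that Definition~\ref{defn:bcitrail} already characterizes $\operatorname{trail}(b)$ as the walk separating matched triangles (on the right) from unmatched ones (on the left) in the simply connected polygon $\widetilde{S}_\gamma$. Such a separating walk is automatically an embedded path (giving (T3)), and an embedded path from $s$ to $t$ in a polygon meets $\gamma$---which also runs from $s$ to $t$---in crossings that are ordered monotonically (giving (T6)). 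So the paper regards both as built into the definition rather than as something to be extracted from the inductive case analysis.

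Your alternative route---threading (T3) and (T6) through the step-by-step induction of Lemma~\ref{lem:bci_tpaths}---would certainly work and has the virtue of being fully explicit, but it duplicates effort: the separation property you invoke for (T3) is exactly why the paper considers (T3) immediate, and your monotonicity clause for (T6) is a consequence of embeddedness in a disk rather than something that needs to be carried through each of the four cases. Finally, your worry about punctures and band identifications is unfounded: per Remark~\ref{rem:adjacent_to_2}, all of this takes place in the finite polygon cover $\widetilde{S}_\gamma$, which is an honest triangulated polygon regardless of whether the original surface is a once-punctured disk or an annulus, so no extra bookkeeping is required.
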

\begin{proof}
The definition of a BCI trail satisfies (T1), (T2), (T3), and (T6).
Lemma \ref{lem:bci_tpaths} proves that a BCI trail satisfies (T4) and (T5).
\end{proof}

\subsection{Triangles map}
\label{subsec:triangles_map}

Conversely, given a reduced $T$-path $\alpha$ for $\gamma$, we show that the set of triangles to the right of $\gamma$ forms a BCI tuple for $\gamma$.
We define the map
\begin{gather*}
\{ \text{reduced $T$-paths for $\gamma$}\} \to 
\{\text{BCI tuples for $\gamma$}\}\\
\alpha \mapsto \operatorname{triangles}(\alpha)
\end{gather*}
which associates to $\alpha$ a tuple of triangles of $\widetilde{S}_\gamma$ that are to the right of $\alpha$. 
The following algorithm tells us the well-defined way to match the $R_i$'s to triangles as we go along $\widetilde{S}_\gamma$ from $R_1$ to $R_r$.

If $\alpha_1$ goes from $s$ to the right vertex $R_1$, we assign $\triangle(R_1)$ to be the triangle immediately to the right of $\alpha_2$. See Fig.~\ref{fig:alpha_1_ends_at_R1}.

If $\alpha_1$ goes to the left vertex $L_1$, then $\alpha_2$ goes from $L_1$ to some right vertex $R_k$. 
We match the first $k$ vertices $R_1,\dots,R_k$ to the triangles $\Delta_0,\dots,\Delta_{k-1}$, in order. These triangles are represented by the shaded area in
 Fig.~\ref{fig:alpha_1_ends_at_L1}.

Since $\alpha_{2j}$ must cross $\gamma$, this step must go from the right to the left of $\gamma$ (or vice versa).
First, suppose $\alpha_{2j}$ goes from a right vertex $R_{C(start)}$ to some left vertex $L_k$.
Then $R_{C(start)}$ has not been matched yet to any triangle which is crossed by $\gamma$ prior to $\alpha_{2j}$.

If $\alpha_{2j+1}$ goes from $L_k$ to the next left vertex $L_{k+1}$, then $\alpha_{2j+2}$ goes from $L_{k+1}$ to a right vertex $R_{C(finish)}$ where $C(start) \leq C(finish)$ (possibly $C(start)=C(finish)$). We match the vertices $R_{C(start)}$, $\dots$, $R_{C(finish)}$ to the triangles in the trapezoid bounded by the subpath $\alpha_{2j}$, $\alpha_{2j+1}$, $\alpha_{2j+2}$. See Fig.~\ref{fig:RtoR}.

If $\alpha_{2j+1}$ goes from $L_k$ to a right vertex $R_{C(finish)}$ where $C(start) < C(finish)$, then 
we match the vertices $R_{C(start)}$, $\dots$, $R_{C(finish)-1}$ to the triangles in the fan bounded by the subpath $\alpha_{2j}$, $\alpha_{2j+1}$, $\alpha_{2j+2}$. See Fig.~\ref{fig:RtoL}.

Second, suppose $\alpha_{2j}$ goes from a left vertex $L$ to some right vertex $R_k$.
Then $R_k$ has been matched to the triangle (having $\alpha_{2j}$ as a side) which is crossed by $\gamma$ prior to $\alpha_{2j}$.

If $\alpha_{2j+1}$ goes from $R_k$ to $R_{k+1}$, then $\alpha_{2j+2}$ goes from the right to the left of $\gamma$ (see Fig.~\ref{fig:LtoL}). This situation for $\alpha_{2j+2}$ has been discussed earlier.

If $\alpha_{2j+1}$ goes from $R_k$ to a left vertex and $\alpha_{2j+2}$ goes to a right vertex $R_{C(finish)}$ (where $k<C(finish)$), then we match $R_{k+1},\dots,R_{C(finish)}$ to the triangles in the fan bounded by the subpath $\alpha_{2j+1}$, $\alpha_{2j+2}$.
See Fig.~\ref{fig:LtoR}.

Finally, the final (and hence odd) step of $\alpha$
either starts from $L_l$ or from $R_r$. If the final step goes from $L_l$ to $t$,
then the previous even step finishes at a left vertex (Fig.~\ref{fig:RtoL} or \ref{fig:LtoL}). Hence $R_r$ has not been matched to a triangle yet. We match $R_r$ to the final triangle $\Delta_d$.
If the final step goes from $R_r$ to $t$, then the previous even step finishes at a right vertex (Fig.~\ref{fig:RtoR} or \ref{fig:LtoR}). Hence $R_r$ has been matched to the triangle having this even step as a side.
By construction, the $\operatorname{triangles}$ map is well-defined.

\begin{lem}
The triangles map $\alpha \mapsto \operatorname{triangles}(\alpha)$ is the inverse of the trail map $b \mapsto \operatorname{trail}(b)$. In particular, both maps are bijections.
\end{lem}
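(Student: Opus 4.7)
The plan is to prove the bijection by showing (i) the triangles map is well-defined in the sense that $\operatorname{triangles}(\alpha)$ really is a BCI tuple, (ii) $\operatorname{trail} \circ \operatorname{triangles}$ is the identity on reduced $T$-paths, and (iii) $\operatorname{triangles} \circ \operatorname{trail}$ is the identity on BCI tuples. Combined with Proposition~\ref{prop:bcitrail_tpath} (which says $\operatorname{trail}(b)$ is always a reduced $T$-path), this will give the claimed bijection.

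First I would verify (i). Given a reduced $T$-path $\alpha$, the algorithm defining $\operatorname{triangles}(\alpha)$ assigns each right vertex $R_i$ to a specific triangle adjacent to it, and these triangles are pairwise distinct because each one is enclosed between consecutive relevant steps of $\alpha$ and the segment of $\gamma$ it crosses; the ordering conditions (T5) and (T6) on $\alpha$ guarantee that the trapezoid/fan regions associated to different even steps are disjoint. In particular, (B\ref{bci:1}) and pairwise distinctness of the entries are both immediate from the construction, so $\operatorname{triangles}(\alpha)$ is a BCI tuple.

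Next I would establish (ii), that $\operatorname{trail}(\operatorname{triangles}(\alpha)) = \alpha$. Let $b := \operatorname{triangles}(\alpha)$ and $\alpha' := \operatorname{trail}(b)$. By construction of $b$, the matched triangles of $b$ lie on the right side of $\alpha$ and the unmatched ones on the left, so $\alpha$ itself satisfies conditions (\bcitrail \ref{defn:bcitrail:matched}) and (\bcitrail \ref{defn:bcitrail:unmatched}) of Definition~\ref{defn:bcitrail} relative to $b$. The uniqueness observation in Remark~\ref{rem:bcitrail}, together with the inductive determination of each successive step carried out in the proof of Lemma~\ref{lem:bci_tpaths}, shows that the BCI trail of a tuple is uniquely determined step by step. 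Therefore $\alpha' = \alpha$.

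Finally I would address (iii), that $\operatorname{triangles}(\operatorname{trail}(b)) = b$. For this I would follow the same case analysis already laid out in Lemma~\ref{lem:bci_tpaths}: at each even step $\alpha_{2j}$ of $\operatorname{trail}(b)$, the proof of that lemma explicitly identifies which right vertex gets matched to which triangle of $b$ (namely, the triangle immediately before or immediately after $\alpha_{2j}$, depending on whether the step travels from left to right or from right to left across $\gamma$), and these are exactly the assignments made by the triangles map. Comparing the two pointwise over all indices $j$ yields $\operatorname{triangles}(\operatorname{trail}(b)) = b$.

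The main obstacle is purely bookkeeping: one must verify that the fan/trapezoid regions used in the triangles algorithm exactly match the maximal connected components of matched and unmatched triangles appearing in the Lemma~\ref{lem:bci_tpaths} casework (the configurations in Figures~\ref{fig:to_Rc}--\ref{fig:from_Lc}), and that the edge cases at the beginning and end of $\gamma$ (first step out of $s$, last step into $t$, and the case $k = l$ or $k = r$ at the boundary) are handled consistently in both maps. Once this matching of cases is made explicit, both (ii) and (iii) follow without further computation.
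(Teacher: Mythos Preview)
Your proposal is correct and follows essentially the same approach as the paper. The paper's proof is considerably more terse: it simply observes that $\operatorname{triangles}(\alpha)$ consists of the triangles to the right of $\alpha$ and $\operatorname{trail}(b)$ is the path with $b$ on its right, so the two compositions are immediately the identity by this common ``right-side'' characterization, without explicitly revisiting the case analysis of Lemma~\ref{lem:bci_tpaths}.
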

\begin{proof}
Let $\alpha=(\alpha_1,\dots,\alpha_{\text{length}(\alpha)})$ be a reduced $T$-path for $\gamma$.
Then $\operatorname{triangles}(\alpha)$ are the triangles to the right of $\alpha$, which form a BCI tuple for $\gamma$.
But $\operatorname{trail}(\operatorname{triangles}(\alpha))$ is the reduced $T$-path to the left of $\operatorname{triangles}(\alpha)$, so $\alpha=\operatorname{trail}(\operatorname{triangles}(\alpha))$.

Conversely, let $b=(\triangle(R_1),\dots,\triangle(R_r))$ be a BCI tuple for $\gamma$.
Then $\operatorname{triangles}(\operatorname{trail}(b))$ is the tuple of all the triangles to the right of $\operatorname{trail}(b)$.
Since $\operatorname{trail}(b)$ is the trail to the left of $b$, we have $b=\operatorname{triangles}(\operatorname{trail}(b))$.
\end{proof}

\subsection{Natural lattice structure of the BCI tuples}

\begin{rem}
It is known that the set of all snake graph perfect matchings has a natural distributive lattice structure ~\cite[Theorem 5.2]{MSW13} (as a consequence of ~\cite[Thm. 2]{Pro02} or ~\cite[Thm. 3]{Fel04}). 
Using a bijection of~\cite[Thm. 4.6]{MS10} between snake graph perfect matchings and complete $T$-paths (see \cite[Def. 2]{Sch10}), it is straight-forward to show
that the BCI tuples have a natural lattice structure which is preserved by the bijection of Proposition \ref{thm:bci_tpath_bijection}.

In the same spirit as~\cite{MS10,Sch10,MSW13}, we define the \emph{minimal} (respectively, \emph{maximal}) BCI tuple to be the tuple where each chosen triangle is as close as possible to (respectively, as far as possible from) the starting point of $\gamma$.  Recall that, due to the convention we use in Definition \ref{defn:signed_adjacency_matrix_ideal}, our convention is equivalent to the convention of~\cite{MS10,Sch10}, so our choice of the minimal BCI tuple corresponds to the minimal snake graph matching and complete $T$-path in said articles.
\end{rem}

\begin{figure}[h]
\begin{center}
\begin{tikzpicture}[scale=0.4]
\draw[->] (-6,0) to (6,0) ;
\draw[->] (-6,4) to (6,4) ;
\draw (0,0) to (2,4) to (5,0) (0,0) to (-2,4) to (-5,0);
\fill (0,3.2) node {$\dots$};
\fill[olive] (0,0) circle (.1);
\fill (0,0) node [below] {$R_i$};
\fill (-2.2,1) node {\tiny{First($R_i$)}};
\fill (2.5,1) node {\tiny{Last($R_i$)}};
\draw[->,red, dashed,very thick] (-2.5,2.5) -- (2.5,2.5) node[below,pos=0.5] {$\gamma$};
\end{tikzpicture}
\caption{The collection $FAN(R_i)$ of triangles adjacent to $R_i$.}
\label{fig:FANRi}
\end{center}
\end{figure}
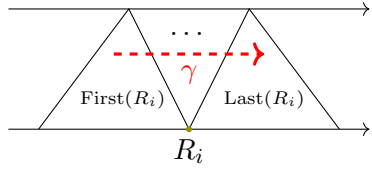

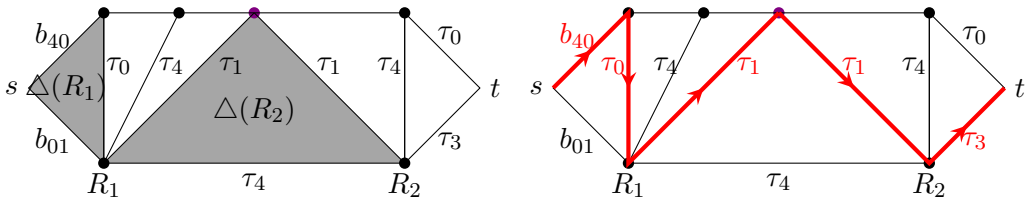
\begin{figure}[h!]
\newcommand{\localscale}{1}
\begin{center}
\begin{tikzpicture}[scale=\localscale]

\fill [\bcifill] (-2,-1) --  (-3,0) -- (-2,1) -- cycle;
\fill [\bcifill] (-2,-1) --  (0,1) -- (2,-1) -- cycle;

\draw (-2.5,0) node {$\triangle(R_1)$};
\draw (0,-0.3) node {$\triangle(R_2)$};

\foreach \t in {-2,-1,2}{ 
	\fill (\t, 1) circle (0.075);
}

\fill[violet] (0,1) circle  (0.075); 

\foreach \t in {-2,2}{ 
	\fill (\t, -1) circle (0.075);
}

\draw (-2,-1) node [below] {$R_1$};
\draw (2,-1) node [below] {$R_2$};
\draw(-3,0) node [left] {$s$};
\draw (3,0) node [right] {$t$};

\draw (0,-1) node [below] {$\tau_4$};

\draw (-2,-1) rectangle (2,1);

%
%
%

\draw  (-2,1) --  (-3,0) 
node[left,pos=0.3] {$b_{40}$}-- (-2,-1)
node[left,pos=0.7] {$b_{01}$};

\draw (2,-1) -- (3,0) node[right,pos=0.3] {$\tau_3$} -- (2,1) 
node[right,pos=0.7] {$\tau_0$};

\draw (-2,-1) -- (-1,1); 
\draw (-1.1,0.3) node[] {$\tau_4$};

\draw (-2,-1) -- (-2,1); 
\draw (-2+0.2,0.3) node[] {$\tau_0$};

\draw (-2,-1) -- (0,1); 

\draw (-0.3,0.3) node[] {$\tau_1$};

\draw (2,-1) -- (0,1); 
\draw (1,0.3) node {$\tau_1$};

\draw (2,-1) -- (2,1); 

\draw (2-0.2,0.3) node[] {$\tau_4$};

\end{tikzpicture}
\begin{tikzpicture}[scale=\localscale]

\foreach \t in {-2,-1,2}{ 
	\fill (\t, 1) circle (0.075);
}

\fill[violet] (0,1) circle  (0.075); 

\foreach \t in {-2,2}{ 
	\fill (\t, -1) circle (0.075);
}

\draw (-2,-1) node [below] {$R_1$};
\draw (2,-1) node [below] {$R_2$};
\draw(-3,0) node [left] {$s$};
\draw (3,0) node [right] {$t$};

\draw (0,-1) node [below] {$\tau_4$};

\draw (-2,-1) rectangle (2,1);

\draw  (-2,1) --  (-3,0) 
node[left,pos=0.3,red] {$b_{40}$}-- (-2,-1)
node[left,pos=0.7] {$b_{01}$};

\draw (2,-1) -- (3,0) node[right,pos=0.3,red] {$\tau_3$} -- (2,1) 
node[right,pos=0.7] {$\tau_0$};

\draw (-2,-1) -- (-1,1); 
\draw (-1.5,0.3) node {$\tau_4$};

\draw (-2,-1) -- (-2,1) ;
\draw (-2.2,0.3) node[red] {$\tau_0$};

\draw (-2,-1) -- (0,1) node[pos=0.65,right,red] {$\tau_1$};
\draw (2,-1) -- (0,1) node[pos=0.65,right,red] {$\tau_1$};

\draw (2,-1) -- (2,1); 
\draw (2-0.2,0.3) node[] {$\tau_4$};

\draw[ultra thick, red, postaction={on each segment={mid arrow=red}}] (-3,0) -- (-2,1) -- (-2,-1) -- (0,1) ;
\draw[ultra thick, red, postaction={on each segment={mid arrow=red}}] (0,1) -- (2,-1) -- (3,0);

\end{tikzpicture}
\end{center}
\caption{The (minimal) BCI tuple $(First(R_1),First(R_2))$ for $\gamma$ from Figure \ref{fig:8gon_cover} and its corresponding reduced $T$-path.}
\label{fig:minimalbcituple}
\end{figure}

\begin{defn}
[Minimal and maximal BCI tuples]
For a vertex $R_i$ to the right of $\gamma$, denote by FAN$(R_i)$ the fan of all triangles adjacent to $R_i$ which are crossed by $\gamma$. 
Per Remark \ref{rem:adjacent_to_2}, this fan contains at least two triangles. See Fig.~\ref{fig:FANRi}.
The orientation of $\gamma$ determines an ordering of the triangles in this fan. 
Let fan$(R_i)_k$ be the $k$-th triangle in FAN$(R_i)$.
Let $First(R_i):= \text{fan}{(R_i)}_1$ and $Last(R_i):=\text{fan}{(R_i)}_{|\text{FAN}(R_i)|}$ denote the first and last triangles in this fan. 
We say that a BCI tuple is the \emph{minimal} BCI tuple if each vertex $R_i$ is mapped to 
$First(R_i)$ (see Fig.~\ref{fig:minimalbcituple}).
Similarly, the \emph{maximal} BCI tuple is defined to be 
$(Last(R_1)$, $\dots$, $Last(R_r))$.
\end{defn}

The lattice is graded by the sum of distances from the triangles $First(R_i)$ of the minimal BCI tuple.

\begin{figure}[h]
\begin{center}
\newcommand{\localscale}{0.65}
\newcommand{\nn}{1.1}
\newcommand{\localwidth}{4}
\begin{tikzpicture}[scale=\localscale]

\draw (0,-2) node[below] {$R_i$};
\draw (0,2) node[above] {$L$};

\fill [\bcifill] (-\localwidth,0) -- (0,-2) -- (0,2) -- cycle;

\draw 
(-\localwidth,0) -- (0,2);
\draw (-\localwidth,0) -- (0,-2);

\draw (0,2) -- (\localwidth,0);
\draw 
(0,-2) -- (\localwidth,0);

\draw
(0,2) -- (0,-2);

\draw (-1-0.5,-0.4) node[] {$\mathbf{fan(R_i)_k}$};
\draw (1.2+0.5,-0.4) node[] {$fan(R_i)_{k+1}$};

\draw[->,very thick, red,densely dotted] (-1.72,0.5) -- (1.3,0.5) node[pos=0.7,above] {$\gamma$};


\end{tikzpicture}
\begin{tikzpicture}[scale=\localscale]
\draw[->,line width=2bp] (0,0.5) -- (2,0.5) node[above, pos=0.5] {\text{up twist}};
\draw[->,line width=2bp] (2,-0.5) -- (0,-0.5) node[below, pos=0.5] {\text{down twist}};
\draw (0,-2.5) node {};
\end{tikzpicture}
\begin{tikzpicture}[scale=\localscale]

\draw (0,-2) node[below] {$R_i$};
\draw (0,2) node[above] {$L$};

\fill [\bcifill] (\localwidth,0) -- (0,-2) -- (0,2) -- cycle;

\draw
(-\localwidth,0) -- (0,-2);
\draw (-\localwidth,0) -- (0,2);

\draw (0,-2) -- (\localwidth,0);
\draw
(0,2) -- (\localwidth,0);

\draw
(0,-2) -- (0,2);

\draw (-1-0.5,-0.4) node[] {$fan(R_i)_{k}$};
\draw (1+0.7,-0.4) node[] {$\mathbf{fan(R_i)_{k+1}}$};

\draw[->, very thick, red,densely dotted] (-1.72,0.5) -- (1.3,0.5) node[pos=0.7,above] {$\gamma$};

\end{tikzpicture}
\end{center}
\caption{Twist of a BCI tuple.}
\label{fig:bcitrail_defn_twist}
\end{figure}
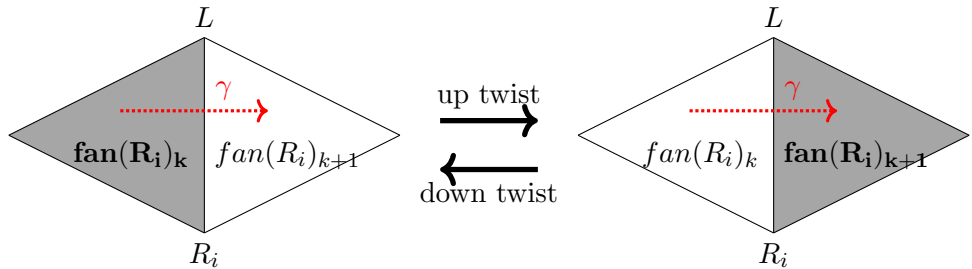

\begin{defn}[twist]
We define an \emph{up twist} to be a local move that affects precisely one triangle $t_i= \text{fan}(R_i)_k$ of $b$,
replacing fan$(R_i)_k$ with fan$(R_i)_{k+1}$.
A \emph{down twist} replaces fan$(R_i)_k$ with fan$(R_i)_{k-1}$.
See Fig.~\ref{fig:bcitrail_defn_twist}.
\end{defn}

\begin{figure}[h]
\begin{center}

\newcommand{\localscale}{0.65}
\newcommand{\nn}{1.2}
\begin{tikzpicture}[scale=\localscale]

\draw (0.7*\nn,0.7*\nn) node[] {$+$};

\draw[line width=3bp] (-1*\nn,-1*\nn) -- (-1*\nn,1*\nn);
\draw[line width=3bp] (1*\nn,-1*\nn) -- (1*\nn,1*\nn);

\draw (-1*\nn,-1*\nn) -- (1*\nn,-1*\nn);
\draw (-1*\nn,1*\nn) -- (1*\nn,1*\nn);

\draw (0,0) node {$i_k$};
\draw (-1.2*\nn,1.2*\nn) node {L};
\draw (1.2*\nn,-1.2*\nn) node {R};
\end{tikzpicture}
\begin{tikzpicture}[scale=\localscale]
\draw[->,ultra thick] (0,0) -- (3,0) node[above, pos=0.5] {\text{up twist}};
\draw (0,-1*\nn) node {};
\end{tikzpicture}
\begin{tikzpicture}[scale=\localscale]

\draw (0.7*\nn,0.7*\nn) node[] {$+$};

\draw (-1*\nn,-1*\nn) -- (-1*\nn,1*\nn);
\draw (1*\nn,-1*\nn) -- (1*\nn,1*\nn);

\draw[line width=3bp] (-1*\nn,-1*\nn) -- (1*\nn,-1*\nn);
\draw[line width=3bp] (-1*\nn,1*\nn) -- (1*\nn,1*\nn);

\draw (0,0) node {$i_k$};
\draw (-1.2*\nn,1.2*\nn) node {L};
\draw (1.2*\nn,-1.2*\nn) node {R};
\end{tikzpicture}

\begin{tikzpicture}[scale=\localscale]

\draw (0.7*\nn,0.7*\nn) node[] {$-$};

\draw (-1*\nn,-1*\nn) -- (-1*\nn,1*\nn);
\draw (1*\nn,-1*\nn) -- (1*\nn,1*\nn);

\draw[line width=3bp] (-1*\nn,-1*\nn) -- (1*\nn,-1*\nn);
\draw[line width=3bp] (-1*\nn,1*\nn) -- (1*\nn,1*\nn);

\draw (0,0) node {$i_k$};
\draw (-1.2*\nn,1.2*\nn) node {R};
\draw (1.2*\nn,-1.2*\nn) node {L};
\end{tikzpicture}
\begin{tikzpicture}[scale=\localscale]
\draw[->,ultra thick] (0,0) -- (3,0) node[above, pos=0.5] {\text{up twist}};
\draw (0,-1*\nn) node {};
\end{tikzpicture}
\begin{tikzpicture}[scale=\localscale]

\draw (0.7*\nn,0.7*\nn) node[] {$-$};

\draw[line width=3bp] (-1*\nn,-1*\nn) -- (-1*\nn,1*\nn);
\draw[line width=3bp] (1*\nn,-1*\nn) -- (1*\nn,1*\nn);

\draw (-1*\nn,-1*\nn) -- (1*\nn,-1*\nn);
\draw (-1*\nn,1*\nn) -- (1*\nn,1*\nn);

\draw (0,0) node {$i_k$};
\draw (-1.2*\nn,1.2*\nn) node {R};
\draw (1.2*\nn,-1.2*\nn) node {L};
\end{tikzpicture}

\begin{tikzpicture}[scale=\localscale]

\draw (0,-2) node[below] {$R$};
\draw (0,2) node[above] {$L$};

\fill [gray!20] (-3,0) -- (0,-2) -- (0,2) -- cycle;

\draw[very thick, draw=black,postaction={on each segment={mid arrow=black}}] (-3,0) -- (0,2);
\draw (-3,0) -- (0,-2);

\draw (0,2) -- (3,0);
\draw[very thick, draw=black,postaction={on each segment={mid arrow=black}}] (0,-2) -- (3,0);

\draw[very thick, draw=black,postaction={on each segment={mid arrow=black}}] 
(0,2) -- (0,-2);

\draw (-1,-0.5) node[] {$\Delta_{k-1}$};
\draw (1.0,-0.5) node[] {$\Delta_{k}$};

\draw[->,ultra thick, red,densely dotted] (-1.72,0.5) -- (1.3,0.5) node[pos=0.7,above] {$\gamma$};


\end{tikzpicture}
\begin{tikzpicture}[scale=\localscale]
\draw[->,ultra thick] (0,0) -- (2,0) node[above, pos=0.5] {\text{up twist}};
\draw (0,-2.2) node {};
\end{tikzpicture}
\begin{tikzpicture}[scale=\localscale]

\draw (0,-2) node[below] {$R$};
\draw (0,2) node[above] {$L$};

\fill [gray!20] (3,0) -- (0,-2) -- (0,2) -- cycle;

\draw[very thick, draw=black,postaction={on each segment={mid arrow=black}}] (-3,0) -- (0,-2);
\draw (-3,0) -- (0,2);

\draw (0,-2) -- (3,0);
\draw[very thick, draw=black,postaction={on each segment={mid arrow=black}}] (0,2) -- (3,0);

\draw[very thick, draw=black,postaction={on each segment={mid arrow=black}}] 
(0,-2) -- (0,2);

\draw (-1,-0.5) node[] {$\Delta_{k-1}$};
\draw (1,-0.5) node[] {$\Delta_{k}$};

\draw[->, ultra thick, red,densely dotted] (-1.72,0.5) -- (1.3,0.5) node[pos=0.7,above] {$\gamma$};
\end{tikzpicture}

\caption{Two upper rows: An up twist on a snake graph tile. Bottom row: An up twist of a sequence of three steps of a $T$-path.}
\label{fig:uptwist_msw}
\end{center}
\end{figure}
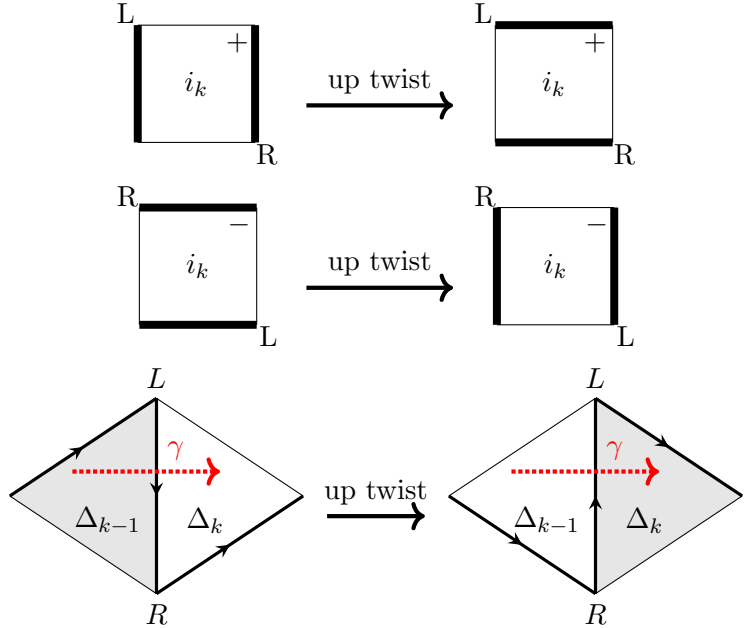

Compare this twist of a triangle in a BCI tuple with
\cite[Thm.~5.4]{MSW13}, which 
explains how to do an up twist on a tile and (consequently, due to~\cite[Thm. 4.6]{MS10}) a three-step subpath of the corresponding $T$-path. See Figure~\ref{fig:uptwist_msw}.

Even though we do not work with principal coefficients, it still makes sense to define a \emph{height function} for each BCI tuple.

\begin{defn}[height function]
Let 
\[
\tau_{R_i,1}, \tau_{R_i,2}, \dots, \tau_{R_i,f}
\]
(where $f=|FAN(R_i)|-1$)
be the arcs crossed by $\gamma$ and adjacent to $R_i$, ordered by the orientation of $\gamma$.
Given an arbitrary BCI tuple $b=(t_1,\dots,t_r)=(\triangle(R_1),\dots,\triangle(R_r))$,
we define its \emph{height function} $h(b)$ by the monomial 
\begin{equation*}
{\left( \prod_k w {\left(\tau_{R_1,k}\right)} \right)} {\left( \prod_k w {\left(\tau_{R_2,k}\right)} \right)} \dots {\left( \prod_k w {\left(\tau_{R_r,k}\right)} \right)}
\end{equation*}
where each inner product is taken over every $k$ such that $\tau_{R_i,k}$ is an edge between triangles $\triangle(R_i)$ and $First(R_i)$.
By convention, if $\triangle(R_i)=First(R_i)$, there is no edge between them, and the product equals $1$.
\end{defn}

\begin{prop}[{Analog of \cite[Theorem 5.2]{MSW13}}]\label{thm:lattice_tuple1}
Construct a graph $L_{BCI}(\gamma)$ whose vertices
are labeled by BCI tuples for $\gamma$, and whose edges connect two vertices
if and only if the two tuples are related by an up or down twist.
This graph is the Hasse diagram of a distributive lattice, whose
minimal element is the minimal BCI tuple of $\gamma$.
The lattice is graded by the degree of each height function.
\end{prop}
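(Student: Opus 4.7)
The plan is to transfer the known distributive lattice structure on perfect matchings of the snake graph $G_{T,\gamma}$ through the chain of bijections
\[
\{\text{BCI tuples for } \gamma\} \;\longleftrightarrow\; \{\text{reduced } T\text{-paths for } \gamma\} \;\longleftrightarrow\; \{\text{perfect matchings of } G_{T,\gamma}\},
\]
where the first bijection is Proposition \ref{thm:bci_tpath_bijection} (the trail map) and the second is \cite[Thm.~4.6]{MS10}. By \cite[Thm.~5.2]{MSW13}, the set of perfect matchings of $G_{T,\gamma}$ carries a distributive lattice structure whose covering relations are single-tile twists and whose grading is given by the analog of the height function. Our strategy is to check that the BCI-twist operation introduced above corresponds, under the composed bijection, precisely to a single-tile twist on $G_{T,\gamma}$; once this is done, the lattice, minimum element, and grading all transport formally.

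First I would isolate the local picture: an up twist replaces $\mathrm{fan}(R_i)_k$ by $\mathrm{fan}(R_i)_{k+1}$, which is exactly the pivot at vertex $R_i$ across the edge $\tau_{R_i,k}$. Running the BCI trail map of Definition \ref{defn:bcitrail} through the case analysis in Lemma \ref{lem:bci_tpaths}, I would verify that this pivot alters the trail only inside the three consecutive steps incident to $R_i$ and $\tau_{R_i,k}$; the altered subpath is exactly the $T$-path twist pictured in the bottom row of Fig.~\ref{fig:uptwist_msw}. Composing with the bijection of \cite[Thm.~4.6]{MS10}, this three-step flip becomes precisely the single-tile flip $_{\text{$\cup$ over $\cap$}} \leftrightarrow {}_{\text{$\supset\subset$}}$ on the tile of $G_{T,\gamma}$ dual to $\tau_{R_i,k}$. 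Thus BCI up/down twists match single-tile twists on the snake graph bijectively.

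Next I would identify the extremal elements and the grading. Since the minimal BCI tuple assigns $First(R_i)$ to each right vertex, the corresponding trail hugs the boundary as close as possible to the start of $\gamma$, which under \cite[Thm.~4.6]{MS10} sends it to the minimal matching in the sense of \cite{MSW13}. The height function $h(b)$ records, tile by tile, how many up twists separate $b$ from the minimal tuple (one factor $w(\tau_{R_i,k})$ per crossed diagonal), which agrees with the tile-by-tile grading in the matching lattice. Thus the graph $L_{\mathrm{BCI}}(\gamma)$ is isomorphic as an edge-labeled ranked graph to the Hasse diagram of the matching lattice, hence is itself a distributive lattice with the claimed minimum and grading.

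The main obstacle I anticipate is bookkeeping in the correspondence between a local BCI twist and the local $T$-path twist: one must carefully check all six configurations (the four tile types in Fig.~\ref{fig:uptwist_msw} and the two triangle orientations, together with the edge cases at $s,t$) to confirm that the only segments of $\operatorname{trail}(b)$ that change are the three steps incident to the twisted triangle, and that no new crossings with $\gamma$ are introduced or removed out of order. Apart from this, once the local correspondence is in hand, the lattice-theoretic consequences (distributivity, minimal/maximal elements, and gradedness by height) follow directly by pulling back the structure of \cite[Thm.~5.2]{MSW13}.
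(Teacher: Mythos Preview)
Your proposal is correct and matches the paper's own approach: the paper does not give a standalone proof but explains in the surrounding remarks that the result follows by transporting the distributive lattice of \cite[Thm.~5.2]{MSW13} on snake-graph matchings through the bijection of \cite[Thm.~4.6]{MS10} and Proposition~\ref{thm:bci_tpath_bijection}, after checking that a BCI twist corresponds to a single-tile twist (Fig.~\ref{fig:uptwist_msw}). One small technical point: the bijection of \cite[Thm.~4.6]{MS10} is with \emph{complete} $T$-paths rather than reduced ones, so your chain should include the standard passage between reduced and complete $T$-paths; otherwise the argument is the same.
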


We describe how to read off from $T$ and $\ga$ a poset $Q_{\ga}$ whose lattice of order ideals $J(Q_\ga)$ is equal to $L_{BCI}(\gamma)$.
The following definition is equivalent to \cite[Def. 5.3]{MSW13}.

\begin{defn}[poset $Q_\gamma$]
\label{defn:poset_Q_ga}
We associate to $\widetilde{S}_\ga$ and $\ga$ a directed graph $Q_\ga$ whose vertices are labeled by $1$, $\dots$, $d$, and whose directed edges are described as follows.
Put an arrow from $k$ to $k+1$ if and only if $\taui{k+1}$ follows $\taui{k}$ (considered as sides of $\Delta_k$) in the {\ellOpOrientation} order (see Fig.~\ref{fig:lattice}).
Because $\widetilde{S}_\ga$ is a triangulated polygon, the underlying undirected graph of $Q_\ga$ is a Dynkin diagram of type $A$.
By abuse of notation, let $Q_\ga$ denote the poset whose Hasse diagram is $Q_\ga$.
\end{defn}

\def\mylightgray{gray!50}
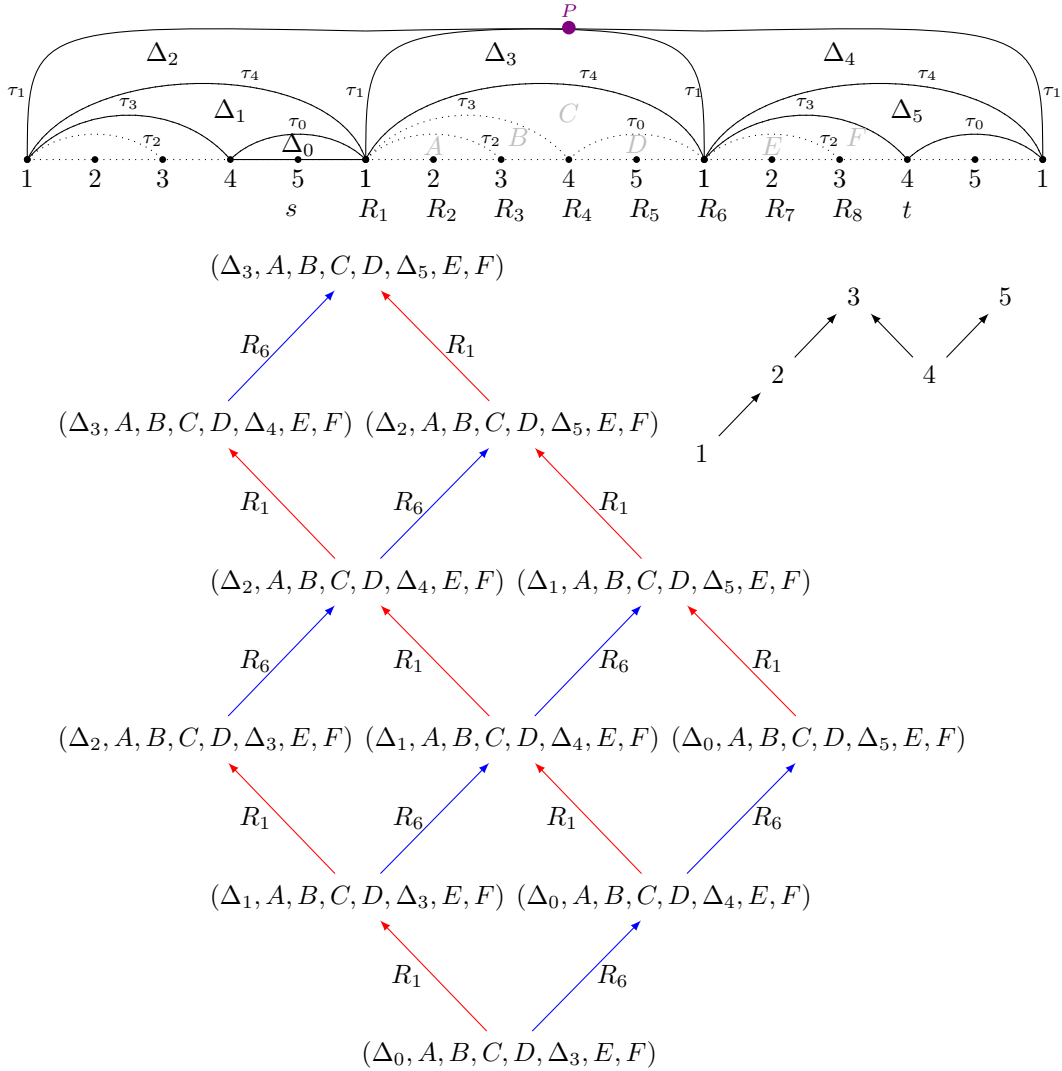
\begin{figure}[h]
\begin{tikzpicture}[scale = .45]
		\tikzstyle{every node} = [font = \small]
		
		\foreach \x in {0,10}
        {
        	\foreach \y in {-8}
            {
            \PruferShort{\x-6}{\y-3}{10}{}
            }
        }
        
        \PruferShortCurvedLeft{20-6}{-8-3}{10}{}
        
        \draw[] (0+0,-8-3) -- (0+4,-8-3);
        
        \foreach \x in {0,10,20}
		{
			\foreach \y in {-8}
			{
				\draw[dotted] (\x-6,\y-3) -- (\x+4,\y-3);
			
				\foreach \t in {-6,-4,-2,0,2,4}
				{
					\fill (\x+\t,\y-3) circle (.1);
				}

				\fill (\x-6,\y-3) node [below] {$1$};
				\fill (\x-4,\y-3) node [below] {$2$};
				\fill (\x-2,\y-3) node [below] {$3$};
				\fill (\x+0,\y-3) node [below] {$4$};
				
				\fill (\x+2,\y-3) node [below] {$5$};
				\fill (\x+4,\y-3) node [below] {$1$};
				
				\fill (\x-5.7,\y-1) node [left] {\tiny{$\tau_1$}};
				\draw[dotted] (\x-6,\y-3) .. controls (\x-5,\y-2) and (\x-3,\y-2) .. (\x-2,\y-3);
				\fill (\x-2.3,\y-2.9) node [above] {\tiny{$\tau_2$}};
                \draw[dotted] (\x-6,\y-3) .. controls (\x-4.5,\y) and (\x+3,\y) .. (\x+4,\y-3);
                \fill (\x-0,\y-0.5) node [right] {\tiny{$\tau_4$}};
				\draw[dotted] (\x-6,\y-3) .. controls (\x-4.5,\y-1.25) and (\x-1.5,\y-1.25) .. (\x-0,\y-3);
				\fill (\x-3,\y-1.8) node [above] {\tiny{$\tau_3$}};

				\draw[dotted] (\x+0,\y-3) .. controls (\x+1,\y-2) and (\x+3,\y-2) .. (\x+4,\y-3);
				\fill (\x+2,\y-2.4) node [above] {\tiny{$\tau_0$}};	
			}
		}	
		
\draw[] (0-6,-8-3) .. controls (0-4.5,-8-1.25) and (0-1.5,-8-1.25) .. (0-0,-8-3); 

\draw[] (0-6,-8-3) .. controls (0-4.5,-8) and (0+3,-8) .. (0+4,-8-3);	
\draw[] (10-6,-8-3) .. controls (10-4.5,-8) and (10+3,-8) .. (10+4,-8-3);	
\draw[] (20-6,-8-3) .. controls (20-4.5,-8) and (20+3,-8) .. (20+4,-8-3);	

\draw[] (20-6,-8-3) .. controls (20-4.5,-8-1.25) and (20-1.5,-8-1.25) .. (20-0,-8-3); 
	
\draw[] (0+0,-8-3) .. controls (0+1,-8-2) and (0+3,-8-2) .. (0+4,-8-3); 
\draw[] (20+0,-8-3) .. controls (20+1,-8-2) and (20+3,-8-2) .. (20+4,-8-3); 
		
        \PruferShortCurvedLeft{20+4}{-8-3}{10}{}
        
        \fill (20+3.7,-8-1) node [right,black] {\tiny{$\tau_1$}};
         
\fill[violet] (0+10,-8+0.9) circle (.2);
\fill (0+10,-8+0.9) node [above,violet] {$\puncture$};
         
\fill (0+2,-8-3.2) node [above] {$\Delta_0$};
\fill (10+2,-8-3.1) node [above,\mylightgray] {$D$};

\fill (5+1,-8-3.18) node [above,\mylightgray] {$A$};
\fill (15+1,-8-3.18) node [above,\mylightgray] {$E$};

\fill (5+3.5,-8-2.9) node [above,\mylightgray] {$B$};
\fill (15+3.5,-8-2.9) node [above,\mylightgray] {$F$};

\fill (0-2,-8-0.5) node [above] {$\Delta_2$};
\fill (-2+10,-8-0.5) node [above] {$\Delta_3$};
\fill (-2+20,-8-0.5) node [above] {$\Delta_4$};  

\fill (0,-8-2.2) node [above] {$\Delta_1$};
\fill (0+10,-8-2.2) node [above,\mylightgray] {$C$}; 
\fill (0+20,-8-2.2) node [above] {$\Delta_5$};  
         
\foreach \x in {1,2,3,4,5,6,7,8}
{
\fill (2*\x+2.25,-8-4.5) node {$R_\x$};
}
\fill (0+1.8,-8-4.5) node {$s$};
\fill (20,-8-4.5) node {$t$};
\end{tikzpicture} 

\vspace {4pt}

\begin{tikzpicture}[scale=1.1,>=latex,line join=bevel,]
	\tikzstyle{every node} = [font = \small]
\foreach \y in {-10bp}
{
\foreach \x in {-40bp}
{
\node (v1) at (\x+253.0bp,\y+225.0bp) [draw,draw=none] {$1$};
\node (v2) at (\x+305.0bp-26bp,\y+279.0bp-27bp) [draw,draw=none] {$2$};
\node (v3) at (\x+305.0bp,\y+279.0bp) [draw,draw=none] {$3$};
\node (v4) at (\x+305.0bp+26bp,\y+279.0bp-27bp) [draw,draw=none] {$4$};
\node (v5) at (\x+305.0bp+52bp,\y+279.0bp) [draw,draw=none] {$5$};
\draw [black,->] (v1) -- (v2);
\draw [black,->] (v2) -- (v3);
\draw [black,->] (v4) -- (v3);
\draw [black,->] (v4) -- (v5);
}
}

\node (node_9) at 
(43.0bp,225.0bp) [draw,draw=none] {$\left(\Delta_3, A, B, C, D, \Delta_4, E, F\right)$};
  \node (node_8) at 
  (148.0bp,225.0bp) [draw,draw=none] {$\left(\Delta_2, A, B, C, D, \Delta_5, E, F\right)$};
  \node (node_7) at (95.0bp,171.0bp) [draw,draw=none] {$\left(\Delta_2, A, B, C, D, \Delta_4, E, F\right)$};
  \node (node_6) at (43.0bp,117.0bp) [draw,draw=none] {$\left(\Delta_2, A, B, C, D, \Delta_3, E, F\right)$};
  \node (node_5) at (200.0bp,171.0bp) [draw,draw=none] {$\left(\Delta_1, A, B, C, D, \Delta_5, E, F\right)$};
  \node (node_4) at (148.0bp,117.0bp) [draw,draw=none] {$\left(\Delta_1, A, B, C, D, \Delta_4, E, F\right)$};
  \node (node_3) at (95.0bp,63.0bp) [draw,draw=none] {$\left(\Delta_1, A, B, C, D, \Delta_3, E, F\right)$};
  \node (node_2) at (253.0bp,117.0bp) [draw,draw=none] {$\left(\Delta_0, A, B, C, D, \Delta_5, E, F\right)$};
  \node (node_1) at (200.0bp,63.0bp) [draw,draw=none] {$\left(\Delta_0, A, B, C, D, \Delta_4, E, F\right)$};
  \node (node_0) at (147.0bp,9.0bp) [draw,draw=none] {$\left(\Delta_0, A, B, C, D, \Delta_3, E, F\right)$};
  \node (node_10) at 
  (95.0bp,279.0bp) [draw,draw=none] {$\left(\Delta_3, A, B, C, D, \Delta_5, E, F\right)$};
  \draw [red,->] (node_0) -- (node_3) node[black,pos=0.5,left] {$R_1$};
  \draw [red,->] (node_1) -- (node_4) node[black,pos=0.5,left] {$R_1$};
  \draw [blue,->] (node_4) -- (node_5) node[black,pos=0.5,right] {$R_6$};
  \draw [red,->] (node_8) -- (node_10) node[black,pos=0.5,right] {$R_1$};
  \draw [red,->] (node_4) -- (node_7) node[black,pos=0.5,right,left] {$R_1$};
  \draw [red,->] (node_2) -- (node_5) node[black,pos=0.5,right] {$R_1$};
  \draw [blue,->] (node_7) -- (node_8) node[black,pos=0.5,left] {$R_6$};
  \draw [blue,->] (node_1) -- (node_2) node[black,pos=0.5,right] {$R_6$};
  \draw [red,->] (node_5) -- (node_8) node[black,pos=0.5,right] {$R_1$};
  \draw [blue,->] (node_9) -- (node_10) node[black,pos=0.5,left] {$R_6$};
  \draw [blue,->] (node_3) -- (node_4) node[black,pos=0.5,left] {$R_6$};
  \draw [red,->] (node_7) -- (node_9) node[black,pos=0.5,left] {$R_1$};
  \draw [blue,->] (node_0) -- (node_1) node[black,pos=0.5,right] {$R_6$};
  \draw [red,->] (node_3) -- (node_6) node[black,pos=0.5,left] {$R_1$};
  \draw [blue,->] (node_6) -- (node_7) node[black,pos=0.5,left] {$R_6$};
\end{tikzpicture}
\caption{Based on the setup of Figure \ref{fig:example_finite_cover_strip} (redrawn at the top). Left: the lattice $L_{BCI}(\gamma)$ of the BCI tuples for $\ga$. Right: the poset $Q_\ga$.}
\label{fig:lattice}
\end{figure}

\begin{rem}
As a corollary of \cite[Theorem 5.4]{MSW13}, we have the following facts.
The lattice $L_{BCI}(\gamma)$ from Proposition \ref{thm:lattice_tuple1} is isomorphic to the lattice of order ideals $J(Q_\ga)$ of the poset $Q_\ga$ from Definition \ref{defn:poset_Q_ga};
the support of the height monomial of a BCI tuple $b$ consists precisely of the elements in the corresponding order ideal.
Moreover, an up twist of an entry of the tuple $b$ corresponds to going up in the poset. 
\end{rem}


\bibliography{citations}{} 
\bibliographystyle{alpha} 

\end{document}